\newtheorem{Lemma}{Lemma}
\newtheorem{Theorem}{Theorem}
\newtheorem{Proposition}{Proposition}
\newtheorem{Corollary}{Corollary}
\newtheorem*{Question}{Question A}
\renewcommand{\phi}{\varphi}
\newcommand{\C}{\operatorname{C}}
\newcommand{\N}{\operatorname{N}}
\newcommand{\Z}{\operatorname{Z}}
\newcommand{\A}{\operatorname{A}}
\newcommand{\Aut}{\operatorname{Aut}}
\newcommand{\GL}{\operatorname{GL}}
\newcommand{\SL}{\operatorname{SL}}
\newcommand{\Irr}{\operatorname{Irr}}
\newcommand{\IBr}{\operatorname{IBr}}
\mathchardef\ordinarycolon\mathcode`\:  
\title{Cartan matrices and Brauer's $k(B)$-conjecture}
\author{
Benjamin Sambale\\
Mathematisches Institut\\
Friedrich-Schiller-Universität\\
07743 Jena\\
Germany\\
{\tt benjamin.sambale@uni-jena.de}
}
\date{\today}
\begin{document}

\frenchspacing
\maketitle

\begin{abstract} 
It is well known that the Cartan matrix of a block of a finite group cannot be arranged as a direct sum of smaller matrices. In this paper we address the question if this remains true for equivalent matrices. The motivation for this question comes from the work by Külshammer and Wada \cite{KuelshammerWada}, which contains certain bounds for the number of ordinary characters in terms of Cartan invariants. As an application we prove such a bound in the special case, where the determinant of the Cartan matrix coincides with the order of the defect group. 

In the second part of the paper we show that Brauer's $k(B)$-conjecture holds for $2$-blocks under some restrictions on the defect group. For example, the $k(B)$-conjecture holds for $2$-blocks if the corresponding defect group is a central extension of a metacyclic group by a cyclic group. The same is true if the defect group contains a central cyclic subgroup of index at most $9$. In particular the $k(B)$-conjecture holds for $2$-blocks of defect at most $4$ and $3$-blocks of defect at most $3$. Using the classification of finite simple groups we improve this result to central cyclic subgroups of index $16$ with one possible exception.
In particular the $k(B)$-conjecture holds for $2$-blocks of defect $5$, except possible the extraspecial defect group $D_8\ast D_8$. 
As a byproduct, we obtain the block invariants for $2$-blocks with minimal nonmetacyclic defect groups. Some proofs rely on computer computations with GAP \cite{GAP4}. The paper is a part of the author's PhD thesis.
\end{abstract}

\textbf{Keywords:} Cartan matrices, Brauer's $k(B)$-conjecture, decomposition matrices, quadratic forms, block theory
\textbf{AMS classification:} 20C15, 20C20, 20C40, 11H55

\tableofcontents

\section{Introduction}
Let $G$ be a finite group and let $B$ be a $p$-block of $G$ for a prime number $p$. We denote the inertial index of $B$ by $e(B)$, the number of ordinary irreducible characters by $k(B)$, and the number of irreducible Brauer characters by $l(B)$. Moreover, let $d$ be the defect of $B$.

It is well known that the Cartan matrix $C$ of $B$ is indecomposable as integer matrix, i.\,e. there is no arrangement of the indecomposable projective modules such that $C$ splits into a direct sum of smaller matrices (recall that $C$ is symmetric). 

We call two matrices $A,B\in\mathbb{Z}^{l\times l}$ \emph{equivalent} if there exists a matrix $S\in\operatorname{GL}(l,\mathbb{Z})$ with $A=S^{\text{T}}BS$, where $S^{\text{T}}$ denotes the transpose of $S$. Every symmetric matrix gives rise to a quadratic form. In this sense equivalent matrices describe equivalent quadratic forms. Richard Brauer describes equivalence of Cartan matrices via so called “basic sets”. 
He also studied Cartan matrices by applying the theory of quadratic forms (see \cite{Brauercertain}).
In general the property “being indecomposable” is not shared among equivalent matrices. For example $A=\bigl(\begin{smallmatrix}1&1\\1&2\end{smallmatrix}\bigr)$ is indecomposable, but $\bigl(\begin{smallmatrix}1&-1\\0&1\end{smallmatrix}\bigr)^{\text{T}}A\bigl(\begin{smallmatrix}1&-1\\0&1\end{smallmatrix}\bigr)=\bigl(\begin{smallmatrix}1&0\\0&1\end{smallmatrix}\bigr)$ is not. However, we were not able to find a Cartan matrix of a block which provides an equivalent decomposable matrix. So we raise the question:

\begin{Question}
Do there exist a Cartan matrix $C$ of a block $B$ and a matrix $S\in\operatorname{GL}(l(B),\mathbb{Z})$ such that $S^{\rm{T}}CS$ is decomposable?
\end{Question}

The motivation for this question comes from the fact that $k(B)$ can be bounded in terms of Cartan invariants:

\begin{Proposition}[see Theorem~A in \cite{KuelshammerWada}]\label{kulswada}
Let $B$ be a block with Cartan matrix $C=(c_{ij})$ up to equivalence. Then for every positive definite, integral quadratic form $q:=\sum_{1\le i\le j\le l(B)}{q_{ij}X_iX_j}$ we have
\[k(B)\le\sum_{1\le i\le j\le l(B)}{q_{ij}c_{ij}}.\]
In particular 
\begin{equation}\label{KW}
k(B)\le\sum_{i=1}^{l(B)}{c_{ii}}-\sum_{i=1}^{l(B)-1}{c_{i,i+1}}.
\end{equation}
\end{Proposition}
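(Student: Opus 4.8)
The plan is to exploit the factorization of the Cartan matrix through the decomposition matrix. Recall that $C=D^{\mathrm{T}}D$, where $D\in\mathbb{Z}^{k(B)\times l(B)}$ is the decomposition matrix of $B$, whose rows $d_\chi$ (indexed by $\chi\in\Irr(B)$) are the decomposition vectors of the ordinary irreducible characters; crucially, each such row is a nonzero vector of nonnegative integers. I would first translate the right-hand side of the asserted inequality into a trace. Writing $q(X)=X^{\mathrm{T}}QX$ with the symmetric Gram matrix $Q$ given by $Q_{ii}=q_{ii}$ and $Q_{ij}=Q_{ji}=q_{ij}/2$ for $i<j$, a short computation using the symmetry of $C$ shows that $\sum_{1\le i\le j\le l(B)}q_{ij}c_{ij}=\operatorname{tr}(QC)$.

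Second, I would expand this trace through the decomposition matrix:
\[\operatorname{tr}(QC)=\operatorname{tr}(QD^{\mathrm{T}}D)=\operatorname{tr}(DQD^{\mathrm{T}})=\sum_{\chi\in\Irr(B)}d_\chi^{\mathrm{T}}Qd_\chi=\sum_{\chi\in\Irr(B)}q(d_\chi).\]
Since $q$ is integral and positive definite and each $d_\chi$ is a nonzero integer vector, every summand $q(d_\chi)$ is a positive integer, hence at least $1$. Summing over the $k(B)$ characters yields $\sum_{i\le j}q_{ij}c_{ij}=\sum_\chi q(d_\chi)\ge k(B)$, which is the general bound.

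Finally, for the displayed special case, I would apply this to the explicit form $q=\sum_{i=1}^{l(B)}X_i^2-\sum_{i=1}^{l(B)-1}X_iX_{i+1}$, whose coefficients are exactly $q_{ii}=1$ and $q_{i,i+1}=-1$. It then only remains to verify that this $q$ is positive definite, for which I would use the sum-of-squares identity $q=\tfrac{1}{2}(X_1^2+X_{l(B)}^2)+\tfrac{1}{2}\sum_{i=1}^{l(B)-1}(X_i-X_{i+1})^2$, which vanishes only at the origin.

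The essential points, rather than any serious obstacle, are the bookkeeping in the trace identity (the factor $1/2$ on the off-diagonal entries of $Q$ must be matched against the fact that each off-diagonal pair of the symmetric matrix $C$ is counted twice) and the observation that positive definiteness together with integrality forces each $q(d_\chi)\ge 1$ rather than merely $q(d_\chi)>0$. The only genuinely form-specific ingredient is the positive-definiteness of the chosen $q$, which the displayed decomposition settles at once.
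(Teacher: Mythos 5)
Your proof is correct and follows essentially the same route as the paper: the paper itself only cites this result (Theorem~A of \cite{KuelshammerWada}), and the argument there --- which also underlies the paper's own generalization in \autoref{majorast}, where the identical trace computation is carried out for generalized decomposition numbers over cyclotomic integers --- is exactly your reduction of $\sum_{i\le j}q_{ij}c_{ij}$ to $\sum_{\chi\in\Irr(B)}q(d_\chi)$ with each summand an integer that is positive by positive definiteness. The only point worth flagging is that, since $C$ is given only up to equivalence, the rows you feed into $q$ are really those of $DS$ for some $S\in\GL(l(B),\mathbb{Z})$ and so need not be nonnegative; this is harmless, because your argument never uses nonnegativity, only that each row is a nonzero integer vector, a property preserved under right multiplication by an invertible integer matrix.
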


The point is that the inequalities are significantly sharper for indecomposable matrices. We illustrate this fact with an example. Let $l(B)=p=2$ and assume that the elementary divisors of $C$ are $2$ and $16$ (this happens for the principal block of $\GL(2,3)$). Then $C$ has the form
\[\begin{pmatrix}2&0\\0&16\end{pmatrix}\text{ or }\begin{pmatrix}6&2\\2&6\end{pmatrix}\]
up to equivalence. Inequality~\eqref{KW} gives $k(B)\le 18$ in the first case and $k(B)\le 10$ in the second.

\section{Upper bounds for $k(B)$}

We give an affirmative answer to question~A in two special cases.

\begin{Lemma}
Let $G$ be $p$-solvable and $l:=l(B)\ge 2$. Then there is no matrix $S\in\operatorname{GL}(l,\mathbb{Z})$ such that $S^{\rm{T}}CS=\bigl(\begin{smallmatrix}p^d&0\\0&C_1\end{smallmatrix}\bigr)$ with $C_1\in\mathbb{Z}^{(l-1)\times(l-1)}$. In particular $C$ is not equivalent to a diagonal matrix.
\end{Lemma}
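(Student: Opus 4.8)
The plan is to translate the statement into the language of quadratic forms and then feed in the two pieces of structure available for $p$-solvable groups: the Fong--Swan theorem and the indecomposability of $C$. First I would record the following equivalence. Writing $v=Se_1$ for the first column of a hypothetical $S$, the identity $S^{\mathrm T}CS=\bigl(\begin{smallmatrix}p^d&0\\0&C_1\end{smallmatrix}\bigr)$ says precisely that $v$ is primitive, $v^{\mathrm T}Cv=p^d$, and $Cv\in p^d\mathbb Z^l$ (so that the $C$-orthogonal complement of $v$ is a direct summand of $\mathbb Z^l$). Since $p^d$ is the largest elementary divisor of $C$ and occurs with multiplicity one, $\widetilde C:=p^dC^{-1}$ is an integral, positive definite, symmetric matrix, and a short computation (given such a $v$, set $w=p^{-d}Cv\in\mathbb Z^l$, so that $\widetilde Cw=v$ and $w^{\mathrm T}\widetilde Cw=w^{\mathrm T}v=1$) shows that such a $v$ exists if and only if $\widetilde C$ represents the value $1$. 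Thus the Lemma is equivalent to the assertion that the positive definite form $\widetilde C$ has no vector of norm $1$, i.e.\ $\min\widetilde C\ge 2$.

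Next I would bring in $p$-solvability. By the Fong--Swan theorem every $\phi\in\IBr(B)$ lifts to an ordinary character $\chi\in\Irr(B)$, so ordering the rows of the decomposition matrix $D$ with these lifts first gives $D=\bigl(\begin{smallmatrix}I_l\\ D'\end{smallmatrix}\bigr)$ and hence $C=I_l+D'^{\mathrm T}D'$. Because $C$ is indecomposable and $l\ge 2$, no column of $D'$ can vanish (a zero column would split a block $\langle 1\rangle$ off $C$), so $q(x)=|x|^2+|D'x|^2\ge 2$ for every $0\ne x\in\mathbb Z^l$; that is, $C$ itself represents no $1$. The same decomposition identifies the entries of $\widetilde C$ with contributions, $\widetilde c_{ij}=p^d m_{\chi_i\chi_j}$, namely the principal submatrix of the integral contribution matrix $p^dM$ (with $M=DC^{-1}D^{\mathrm T}$) cut out by the lifts.

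The remaining and decisive step is to prove $\min\widetilde C\ge 2$. I expect this to be the main obstacle, and it is genuinely a global (integral) statement rather than a local one: since $p^d$ has multiplicity one, $\widetilde C$ has exactly one elementary divisor prime to $p$, so $\widetilde C$ is ``locally splittable'' at $p$ and $\widetilde C\bmod p$ has rank $1$ --- yet representation of $1$ can still fail. (For both $A_4$ and $\SL(2,3)$ at $p=2$ one computes $\widetilde C=4I-J$ with $J$ the all-ones matrix, whose minimum is $3$, not $1$.) To control the actual minimum I would use the height relation $\nu_p(m_{\chi\chi})=2h(\chi)-d$, which shows that $\widetilde C\bmod p$ is supported on the height-zero lifts, and then bound the diagonal contributions $p^d m_{\chi_i\chi_i}$ and their off-diagonal interactions from below so as to exclude a norm-$1$ combination. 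The natural tool for turning these $p$-adic statements into honest inequalities is Fong's reduction of a $p$-solvable block to a block with normal defect group, for which $C$, and hence $\widetilde C$, can be written down explicitly; combined with the fact that $C$ represents no $1$, this should yield $\min\widetilde C\ge 2$ and complete the proof.
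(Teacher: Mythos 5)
Your opening reduction is correct and quite elegant: the existence of $S$ with $S^{\mathrm{T}}CS=\bigl(\begin{smallmatrix}p^d&0\\0&C_1\end{smallmatrix}\bigr)$ is indeed equivalent to the integral form $\widetilde C=p^dC^{-1}$ representing $1$, and the Fong--Swan step correctly gives $C=I_l+D'^{\mathrm{T}}D'$ with no vanishing column of $D'$, hence that $C$ itself represents no $1$. But the proof stops exactly where the Lemma begins: what you actually prove is $\min C\ge 2$, whereas the Lemma is the statement $\min\widetilde C\ge 2$, and for that step you offer only a plan (``should yield''). This gap is not a matter of unfinished bookkeeping --- it cannot be closed from the ingredients you have assembled. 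Take $D'=\bigl(\begin{smallmatrix}1&2\\0&1\end{smallmatrix}\bigr)$, so that $C=I_2+D'^{\mathrm{T}}D'=\bigl(\begin{smallmatrix}2&2\\2&6\end{smallmatrix}\bigr)$. This matrix has every property you derived: it is of Fong--Swan shape with no zero column of $D'$, it is indecomposable, it represents no $1$, and its elementary divisors are $2$ and $4=p^d$ (the latter with multiplicity one). Yet $\widetilde C=4C^{-1}=\bigl(\begin{smallmatrix}3&-1\\-1&1\end{smallmatrix}\bigr)$ visibly represents $1$, and indeed $S=\bigl(\begin{smallmatrix}-1&1\\1&0\end{smallmatrix}\bigr)\in\operatorname{GL}(2,\mathbb{Z})$ gives $S^{\mathrm{T}}CS=\bigl(\begin{smallmatrix}4&0\\0&2\end{smallmatrix}\bigr)$. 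So the conclusion you are after fails for a matrix satisfying everything you proved; some further block-theoretic input is unavoidable, and your appeal to heights and Fong's reduction is left entirely unexecuted.

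The missing input is precisely what rules out this example and is the engine of the paper's proof: Theorem~(3H) of Fong, which for $p$-solvable groups gives the bound $c_{ii}\le p^d$ for all diagonal Cartan invariants (note $c_{22}=6>4$ in the example above). With this bound the argument is short and works directly with the columns of $S$: from
\[c_{ii}=p^ds_{1i}^2+s_iC_1s_i^{\mathrm{T}}\le p^d\]
(where $s_i$ is the $i$-th column of $S$ with its first entry removed) one gets that every entry $s_{1i}$ of the first row of $S$ lies in $\{0,\pm1\}$, and that $s_{1i}=\pm1$ forces $s_i=0$ by positive definiteness of $C_1$; invertibility of $S$ then makes $S$ block diagonal, so $C$ itself would be decomposable, contradicting the indecomposability of Cartan matrices of blocks. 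If you want to keep your dual formulation via $\widetilde C$, you must import this inequality (or an equivalently strong consequence of Fong's theory) in place of the Fong--Swan lift, which, as the example shows, is too weak to yield the Lemma.
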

\begin{proof}
Assume the contrary, i.\,e. there is a matrix $S=(s_{ij})\in\operatorname{GL}(l,\mathbb{Z})$ such that
\[C=(c_{ij})=S^{\text{T}}\begin{pmatrix}p^d&0\\0&C_1\end{pmatrix}S\]
with $C_1\in\mathbb{Z}^{(l-1)\times(l-1)}$. Let $s_i:=(s_{2i},s_{3i},\ldots,s_{li})$ for $i=1,\ldots,l$. By Theorem~(3H) in \cite{Fongpsolv} we have
\[p^ds_{i1}^2+s_iC_1s_i^{\text{T}}=c_{ii}\le p^d\]
for $i=1,\ldots,l$. Since $S$ is invertible, there exists $i$ such that $s_{1i}\ne 0$. We may assume $s_{11}\ne 0$. Then $s_{11}=\pm1$ and $s_1=(0,\ldots,0)$, because $C_1$ is positive definite. Now all other columns of $S$ are linearly independent of the first column. This gives $s_{1i}=0$ for $i=2,\ldots,l$. Hence, $S$ has the form $S=\bigl(\begin{smallmatrix}\pm1&0\\0&S_1\end{smallmatrix}\bigr)$ with $S_1\in\operatorname{GL}(l-1,\mathbb{Z})$. But then $C$ also has the form $\bigl(\begin{smallmatrix}p^d&0\\0&C_2\end{smallmatrix}\bigr)$ with $C_2\in\mathbb{Z}^{(l-1)\times(l-1)}$, a contradiction. The second claim follows at once, since $p^d$ is always an elementary divisor of $C$.
\end{proof}

Unfortunately the bound for the Cartan invariants used in the proof does not hold for arbitrary groups (see \cite{Landrock}).

\begin{Lemma}\label{decomposable}
If $\det C=p^d$, then for every $S\in\operatorname{GL}(l(B),\mathbb{Z})$ the matrix $S^{\rm{T}}CS$ is indecomposable.
\end{Lemma}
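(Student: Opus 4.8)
The plan is to argue by contradiction and to translate the decomposability of $S^{\mathrm T}CS$ into a statement about the lattice of projective characters. Suppose some $S\in\GL(l,\mathbb{Z})$ satisfied $S^{\mathrm T}CS=C_1\oplus C_2$ with $C_i\in\mathbb{Z}^{l_i\times l_i}$ and $l_1,l_2\ge 1$, so that $l:=l(B)\ge 2$ and hence $d\ge 1$. First I would pin down the elementary divisors of $C$: since $p^d$ is always an elementary divisor of $C$ and $\det C=p^d$, while all elementary divisors are positive integers whose product is $p^d$, they must be $1,\dots,1,p^d$. Congruence by $S$ preserves elementary divisors, so $C_1\oplus C_2$ has the same ones; in particular $\det C_1\cdot\det C_2=p^d$ with both factors powers of $p$, and exactly one elementary divisor is divisible by $p$. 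Because $\operatorname{rank}_{\mathbb{F}_p}$ is additive on orthogonal sums, the number of elementary divisors divisible by $p$ is additive over $C_1\oplus C_2$, so exactly one block, say $C_2$, has none, i.e.\ $\det C_2=1$.

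Next I would pass to lattices using $C=D^{\mathrm T}D$, where $D=(d_{\chi i})$ is the decomposition matrix of $B$. Put $E:=DS=(E_1\mid E_2)$ with $E_2\in\mathbb{Z}^{k\times l_2}$ and $k:=k(B)$; then $E_2^{\mathrm T}E_2=C_2$, so the columns of $E_2$ span a sublattice $\Lambda_2\subseteq\mathbb{Z}^k$ (standard inner product) of rank $l_2$ with Gram determinant $\det C_2=1$, contained in the projective lattice $\Lambda:=D\mathbb{Z}^l=E\mathbb{Z}^l$. The crux is then a purely lattice-theoretic fact: a unimodular sublattice $\Lambda_2$ of the standard lattice $\mathbb{Z}^k$ must contain a vector of norm $1$. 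I would prove this via the orthogonal projection $\pi$ onto $\mathbb{R}\Lambda_2$: unimodularity of $\Lambda_2$ forces $\pi(\mathbb{Z}^k)\subseteq\Lambda_2^{\sharp}=\Lambda_2$, and since $\pi$ is a symmetric idempotent of rank $l_2$ one has $\sum_{\chi}\lVert\pi(e_\chi)\rVert^2=\operatorname{tr}\pi=l_2$. As the $\pi(e_\chi)$ generate $\Lambda_2$, at least $l_2$ of them are nonzero, each of integral norm $\ge 1$, and the sum being $l_2$ forces exactly $l_2$ of them to have norm $1$.

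A vector of norm $1$ in $\mathbb{Z}^k$ is $\pm e_{\chi'}$ for some $\chi'\in\Irr(B)$, so I obtain $e_{\chi'}\in\Lambda_2\subseteq\Lambda$. This is where block theory delivers the contradiction: $e_{\chi'}\in D\mathbb{Z}^l$ means $\chi'=\sum_i m_i\Phi_i$ for integers $m_i$, where $\Phi_i$ denotes the $i$-th projective indecomposable character of $B$; hence $\chi'$ is a virtual projective character and therefore vanishes on all $p$-singular elements. An irreducible character with this property has $p$-defect $0$, and a defect-zero character is alone in its block, forcing $l(B)=1$ and contradicting $l(B)=l_1+l_2\ge 2$.

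I expect the main obstacle to be the middle step, namely producing an honest standard basis vector inside $\Lambda_2$ rather than merely an abstract norm-$1$ vector. The determinant and elementary-divisor bookkeeping of the first paragraph is routine, and the block-theoretic conclusion is standard once a defect-zero character is exhibited; everything hinges on converting the equality $\det C_2=1$ into the geometric statement that the unimodular summand meets the set $\{\pm e_\chi\}$, which is exactly what the projection and trace computation achieves.
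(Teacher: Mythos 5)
Your proof is correct, and its core runs along genuinely different lines from the paper's. Both arguments share the same frame: assume $S^{\rm T}CS=C_1\oplus C_2$, use the elementary divisors $1,\dots,1,p^d$ of $C$ to conclude that exactly one diagonal block is unimodular, and reach a contradiction by exhibiting an irreducible character of $B$ that vanishes on all $p$-singular elements (hence $d=0$, against $l(B)\ge 2$). The difference is how the unimodular block is exploited. The paper works with the \emph{rows} of the corresponding $k\times m$ piece $Q$ of the transformed decomposition matrix: by the Binet--Cauchy formula exactly one $m\times m$ minor of $Q$ is nonsingular, a linear-dependence argument forces all remaining rows of $Q$ to vanish, and then the orthogonality relations between ordinary and \emph{generalized} decomposition numbers show that every column $d(u)$, $u\ne 1$, vanishes in those $m$ coordinates; the vanishing character is thus produced via subsection theory. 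You work instead with the \emph{columns}: unimodularity turns the column span $\Lambda_2$ of $E_2$ into a unimodular sublattice of $\mathbb{Z}^k$, and your projection-and-trace argument is sound ($\pi(\mathbb{Z}^k)\subseteq\Lambda_2^\sharp=\Lambda_2$ by integrality and unimodularity, $\sum_\chi\|\pi(e_\chi)\|^2=\operatorname{tr}\pi=l_2$, and at least $l_2$ of the generators $\pi(e_\chi)$ of the rank-$l_2$ lattice $\Lambda_2$ are nonzero integral vectors), so $\Lambda_2$ contains some $\pm e_{\chi'}$; hence $\chi'$ is a virtual projective character and vanishes on $p$-singular elements for elementary reasons. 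Your route needs no generalized decomposition numbers at all --- only $C=D^{\rm T}D$, the vanishing of projective characters on $p$-singular elements, and the defect-zero characterization --- at the cost of the lattice lemma, which is however a clean standard fact (unimodular sublattices of $\mathbb{Z}^k$ split off orthogonally and are spanned by signed standard basis vectors). The paper's version stays inside the decomposition-number and orthogonality toolkit it reuses heavily in later sections and gives concrete information on the shape of $Q$ (all but $m$ rows are zero); yours is more conceptual, isolates a purely lattice-theoretic statement, and in fact proves slightly more, namely that the irreducible characters supporting the unimodular summand are themselves integral combinations of projective indecomposable characters.
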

\begin{proof}
Again assume the contrary, i.\,e. there is a matrix $S\in\operatorname{GL}(l(B),\mathbb{Z})$ such that
\[C=S^{\text{T}}\begin{pmatrix}C_1&0\\0&C_2\end{pmatrix}S\]
with $C_1\in\mathbb{Z}^{m\times m}$ and $C_2\in\mathbb{Z}^{(l-m)\times(l-m)}$, where $l:=l(B)$ and $1\le m<l$. In particular $l<k(B)=:k$, because $l\ge 2$.
Since $\det C=p^d$, the elementary divisors of $C$ are $1$ and $p^d$, where $p^d$ occurs with multiplicity one. W.\,l.\,o.\,g. we may assume $\det C_1=1$. Let $Q=(q_{ij})$ be the corresponding part of the decomposition matrix, i.\,e. $Q^{\text{T}}Q=C_1$. By the Binet-Cauchy formula we have
\[1=\det C_1=\sum_{\substack{V\subseteq\{1,\ldots,k\},\\|V|=m}}{\det Q_V^{\text{T}}Q_V},\]
where $Q_V$ is the $m\times m$ submatrix consisting of the entries $\{q_{ij}:i\in V,\ j\in\{1,\ldots,m\}\}$. Since $\det Q_V^{\text{T}}Q_V\ge 0$, one summand is $1$ while the others are all $0$. Thus we may assume, that the first $m$ rows $q_1,\ldots,q_m$ of $Q$ are linearly independent. Now consider a row $q_i$ for $i\in\{m+1,\ldots,k\}$. Then $q_i$ is a rational linear combination of $q_2,\ldots,q_m$, because $q_2,\ldots,q_m,q_i$ are linearly dependent. By the same argument, $q_i$ is also a linear combination of $q_1,\ldots,q_{j-1},q_{j+1},\ldots,q_m$ for $j=2,\ldots,m$. This forces $q_i=(0,\ldots,0)$. Hence, all the rows $q_{m+1},\ldots,q_k$ vanish. Now consider a column $d(u)$ of generalized decomposition numbers, where $u$ is a nontrivial element of a defect group of $B$. By the orthogonality relations the scalar product of $d(u)$ and an arbitrary column of $Q$ vanishes. This means the first $m$ entries of $d(u)$ must be zero. Since this holds for all columns $d(u)$ with $u\ne 1$, there exists an irreducible character of $B$ which vanishes on the $p$-singular elements of $G$. It is well known that this is equivalent to $d=0$. But this contradicts $l\ge 2$.
\end{proof} 

As an application, we prove an upper bound for $k(B)$ in the case $\det C=p^d$. In the proof we will use the reduction theory of quadratic forms.

\begin{Theorem}\label{detCD}
If $l(B)\le 4$ and $\det C=p^d$, then 
\[k(B)\le\frac{p^d-1}{l(B)}+l(B).\]
Moreover, this bound is sharp.
\end{Theorem}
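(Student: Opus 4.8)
The plan is to feed the reduction theory of positive definite quadratic forms into Proposition~\ref{kulswada}. By that proposition the inequality $k(B)\le\sum q_{ij}c_{ij}=\operatorname{tr}(QC)$, where $Q$ is the Gram matrix of $q$, holds simultaneously for every positive definite integral form $q$ and for every matrix in the equivalence class of $C$. (These two freedoms are the same: if $C'=S^{\mathrm T}CS$ then $\operatorname{tr}(QC')=\operatorname{tr}(SQS^{\mathrm T}C)$, and $SQS^{\mathrm T}$ is again an integral positive definite form.) Hence I may first apply Minkowski reduction and assume that $C=(c_{ij})$ is reduced, so that $c_{11}\le\dots\le c_{ll}$ and $2|c_{ij}|\le c_{ii}$ for $i<j$. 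Now the decisive structural input comes from Lemma~\ref{decomposable}: since $\det C=p^d$, every matrix equivalent to $C$ is indecomposable, and a reduced form having some $c_{ii}=1$ would split off a norm-one basis vector as an orthogonal summand. Thus indecomposability forces $c_{ii}\ge 2$ for all $i$, and the coupling graph on $\{1,\dots,l\}$ (edges at the nonzero off-diagonal positions) is connected.

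This reduces the theorem to a purely arithmetic optimization: among all reduced, indecomposable, positive definite integral $C\in\mathbb Z^{l\times l}$ with $l\le 4$ and $\det C=p^d$, show $\sum_i c_{ii}-\sum_{ij\in E}c_{ij}\le\frac{p^d-1}{l}+l$ for a well-chosen spanning tree $E$ of the coupling graph. After sign changes $X_i\mapsto\pm X_i$ (possible without cycles on a tree) I may assume the tree edges satisfy $c_{ij}\ge 0$, and I take the integral form $q=\sum_i X_i^2-\sum_{ij\in E}X_iX_j$ with Gram matrix $I-\tfrac12A_E$. For $l\le 4$ the only trees are paths and the claw $K_{1,3}$, whose adjacency eigenvalues are all strictly below $2$, so every such $q$ is positive definite; Proposition~\ref{kulswada} then yields $k(B)\le\sum_i c_{ii}-\sum_{ij\in E}c_{ij}$. (This is exactly where $l\le 4$ enters, since $K_{1,4}$ already fails positive definiteness.)

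The cases $l=1,2$ are immediate: for $l=1$ we get $k(B)\le c_{11}=p^d$, and for $l=2$ the constraints $c_{11}c_{22}-c_{12}^2=p^d$, $2\le c_{11}\le c_{22}$ and $1\le c_{12}\le c_{11}/2$ give, after a one-parameter maximization peaking at $c_{11}=2$, $c_{12}=1$, the value $c_{11}+c_{22}-c_{12}\le\frac{p^d+3}{2}$. For $l=3,4$ I would carry out the analogous maximization of $\sum_i c_{ii}-\sum_{ij\in E}c_{ij}$ subject to $\det C=p^d$ and the reduction inequalities. The extremal configuration is the tridiagonal matrix formed from an $A_{l-1}$-block and a single large entry $\frac{p^d-1}{l}+1$; this is the reduced form of the Cartan matrix $I+\frac{p^d-1}{l}J$ of a block with cyclic defect group of order $p^d$ and inertial index $l$, and evaluating the bound there returns exactly $\frac{p^d-1}{l}+l$. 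The same examples, taking a prime with $l\mid p-1$, establish sharpness.

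The main obstacle is the optimization for $l=4$. In contrast to the binary case, the Minkowski reduction domain is cut out by many inequalities, the coupling graph may be a path or a claw (each requiring its own form $q$ and its own sign normalization), and the genuine difficulty is the trade-off between the diagonal and the off-diagonal: when $c_{ll}$ is large, $\det C=p^d$ must be forced down from $\prod_i c_{ii}$ by correspondingly large couplings along the chosen tree, so that the subtraction $-\sum_{ij\in E}c_{ij}$ in the Külshammer–Wada bound compensates. Proving that a large $c_{ll}\approx p^d/l$ cannot coexist with small tree-couplings—equivalently, that the worst case is always the tridiagonal cyclic-block form—is the crux, and is precisely where the explicit low-dimensional reduction theory (supplemented, if necessary, by a finite computer check over the remaining boundary configurations) does the work.
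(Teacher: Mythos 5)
Your overall route is the same as the paper's: Minkowski reduction, \autoref{decomposable} to force $c_{ii}\ge 2$ and a connected coupling graph, \autoref{kulswada} with path and claw forms (this is also exactly how the paper uses $l(B)\le 4$), an optimization over the reduction domain, and cyclic blocks for sharpness; your $l(B)\le 3$ cases go through essentially as in the paper. The genuine gap sits exactly at the crux you defer, and it is not merely an omitted computation: your claim that the optimization of $\sum_i c_{ii}-\sum_{ij\in E}c_{ij}$ over reduced indecomposable matrices of determinant $p^d$ ``returns exactly $\frac{p^d-1}{l}+l$'' is \emph{false} for $p=2$ and $l(B)=4$. The paper exhibits the reduced matrix \eqref{threeposs} with $\delta=2^{d-2}+1$: it is positive definite, indecomposable, and has determinant $2^d$, but its coupling graph is a $4$-cycle (edges $12,23,34,14$) whose off-diagonal entries are $\pm 1$ with sign product $-1$ around the cycle. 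Every spanning tree of a $4$-cycle is a path with three edges of absolute value $1$, so every form in your family gives only $k(B)\le 6+\delta-3=2^{d-2}+4$, whereas the theorem requires the integer bound $k(B)\le\lfloor(2^d-1)/4\rfloor+4=2^{d-2}+3$. Thus the supremum of your ``purely arithmetic optimization'' strictly exceeds the theorem's bound, and no finite computer check \emph{inside} the spanning-tree framework can repair this, because the framework's bound is genuinely too weak at this configuration. (For odd $p$ the discrepancy of $1/4$ is killed by flooring, as the paper notes; the obstruction is specifically $p=2$.)

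The paper closes precisely this case by stepping outside the quadratic-form method: a computer search shows that \eqref{threeposs} is, up to equivalence, the only configuration in the critical corner with the right determinant, and then a block-theoretic analysis of the corresponding decomposition matrix (the staircase argument spelled out after the proof, using the orthogonality relations for generalized decomposition numbers) yields $k(B)\le\delta+2$. Alternatively, one can stay within \autoref{kulswada} but must abandon spanning trees: the unbalanced-cycle form $q=x_1^2+x_2^2+x_3^2+x_4^2-x_1x_2-x_2x_3-x_3x_4+x_1x_4$ is positive definite (its signed adjacency matrix has eigenvalues $\pm\sqrt{2}$, and its doubled Gram matrix is the $D_4$ Cartan matrix) and evaluates on \eqref{threeposs} to exactly $\delta+2$. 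Some such supplement is indispensable, and your proposal as written contains neither. A smaller point: the permutations and sign changes you use to normalize the tree can destroy Minkowski reducedness; the paper addresses this explicitly (the trace is symmetric in the diagonal entries), and your argument needs the same remark.
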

\begin{proof}
For $l:=l(B)=1$ the assertion is clear (see e.\,g. Corollary~5 in \cite{Olssoninequ}). So let $l\ge2$. Let $A=(a_{ij})$ be a reduced matrix in the sense of Minkowski which is equivalent to $C$ (see e.\,g. \cite{Waerden}). In particular we have $1\le a_{11}\le a_{22}\le\ldots\le a_{ll}$ and $2|a_{ij}|\le\min\{a_{ii},a_{jj}\}$ for $i\ne j$. For convenience we write $\alpha:=a_{11}$, $\beta:=a_{22}$ and so on.

We are going to apply inequality~\eqref{KW}. In order to do so, we will bound the trace of $A$ from above and the sum $a_{12}+a_{23}+\ldots+a_{l-1,l}$ from below.

Let $l(B)=2$. By \autoref{decomposable} we have $a_{12}\ne 0$ and $a_{12}>0$ after a suitable change of signs (i\,e. replacing $A$ by an equivalent matrix). By \cite{Barnes} we have $4\alpha\beta-\alpha^2\le4p^d$, so that
\begin{equation}\label{feasible}
\alpha+\beta\le\frac{5}{4}\alpha+\frac{p^d}{\alpha}=:f(\alpha).
\end{equation}
Since $2|a_{ij}|\le\min\{a_{ii},a_{jj}\}$, we have $2\le\alpha$, and $\alpha\le\beta$ yields $\alpha\le2\sqrt{p^d/3}$.
The convex function $f(\alpha)$ takes its maximal value in the interval $[2,2\sqrt{p^d/3}]$ on one of the two borders. 
An easy calculation shows $(p^d+5)/2=f(2)>f(2\sqrt{p^d/3})$ for $p^d\ge 9$. In case $p^d\le 6$ only $\alpha=2$ is possible. In the remaining cases we have $\alpha+\beta\le f(2)$ for all feasible pairs $(\alpha,\beta)$ (we call a pair $(\alpha,\beta)$ feasible if it satisfies inequality~\eqref{feasible}). 
Inequality~\eqref{KW} yields
\[k(B)\le\alpha+\beta-a_{12}\le f(2)-1=\frac{p^d-1}{l(B)}+l(B).\]

Let $l(B)=3$. The same discussion leads to $a_{12}+a_{23}\ge 2$ after a suitable (simultaneous) permutation of rows and columns (i.\,e. replacing $A$ by $P^{\text{T}}AP$ with a permutation matrix $P$). It is not always possible to achieve $\alpha\le\beta\le\gamma$ additionally. But since the trace of $A$ is symmetric in $\alpha$, $\beta$ and $\gamma$, we may assume $2\le\alpha\le\beta\le\gamma$ nevertheless. The inequality in \cite{Barnes} reads 
\[4\alpha\beta\gamma-\alpha\beta^2-\alpha^2\gamma=2\alpha\beta\gamma+\alpha\beta(\gamma-\beta)+\alpha\gamma(\beta-\alpha)\le 4p^d,\]
so that 
\[\alpha+\beta+\gamma\le\alpha+\beta+\frac{4p^d+\alpha\beta^2}{4\alpha\beta-\alpha^2}=:f(\alpha,\beta).\]
We describe a set which contains all feasible points. Since $2\alpha^3\le2\alpha\beta\gamma+\alpha\beta(\gamma-\beta)+\alpha\gamma(\beta-\alpha)\le4p^d$ we get $2\le\alpha\le\sqrt[3]{2p^d}$. Similarly $4\beta^2\le 4p^d$ and $\alpha\le\beta\le\sqrt{p^d}$. Thus all feasible points are contained in the convex polygon
\[\mathcal{F}:=\bigl\{(\alpha,\beta):2\le\alpha\le\sqrt[3]{2p^d},\ \alpha\le\beta\le\sqrt{p^d}\bigr\}.\]
It can be shown (maybe with the help of a computer) that $f$ is convex on $\mathcal{F}$. Hence, the maximal value of $f$ on $\mathcal{F}$ will be attained on one of the $3$ vertices:
\begin{align*}
V_1&=(2,2),\\
V_2&=(2,\sqrt{p^d}),\\
V_3&=(\sqrt[3]{2p^d},\sqrt[3]{2p^d}).
\end{align*}
One can check that $(p^d+14)/3=f(V_1)\ge f(V_2)$ for $p^d\ge10$ and $f(V_1)\ge f(V_3)$ for $p^d\ge 12$. If $p^d\le 10$, then $V_1$ is the only feasible point. In the remaining case $p^d=11$ there is only one more feasible pair $(\alpha,\beta)=(2,3)$. Then $\gamma=3$ and $\alpha+\beta+\gamma\le f(V_1)$.
Now inequality~\eqref{KW} takes the form
\[k(B)\le\alpha+\beta+\gamma-a_{12}-a_{23}\le f(V_1)-2=\frac{p^d-1}{l(B)}+l(B).\]

Finally let $l(B)=4$. By permuting rows and columns and changing signs, we can reach (using \autoref{decomposable}) at least one of the two arrangements
\begin{enumerate}[(i)]
\item $a_{12}+a_{23}+a_{34}\ge3$,\label{case1}
\item $a_{12}+a_{13}+a_{14}\ge3$.\label{case2}
\end{enumerate}
In case \eqref{case1} we can use inequality~\eqref{KW} as before. In case \eqref{case2} we can use \autoref{kulswada} with the quadratic form $q$ corresponding to the positive definite matrix 
\[\frac{1}{2}\begin{pmatrix}2&-1&-1&-1\\-1&2&0&0\\-1&0&2&0\\-1&0&0&2\end{pmatrix}.\]
Thus, for the rest of the proof we will assume that case \eqref{case1} occurs. As before, we will also assume $2\le\alpha\le\beta\le\gamma\le\delta$ and
\begin{equation}\label{Barnes3}
\begin{split}
&4\alpha\beta\gamma\delta-\alpha^2\gamma\delta-\alpha\beta^2\delta-\alpha\beta\gamma^2+\frac{1}{4}\alpha^2(\gamma-\beta)^2\\
&=\alpha\beta\gamma\delta+\alpha\gamma\delta(\beta-\alpha)+\alpha\beta\delta(\gamma-\beta)+\alpha\beta\gamma(\delta-\gamma)+\frac{1}{4}\alpha^2(\gamma-\beta)^2\le 4p^d
\end{split}
\end{equation}
by \cite{Barnes}. We search for the maximum of the function
\[f(\alpha,\beta,\gamma):=\alpha+\beta+\gamma+\frac{4p^d+\alpha\beta\gamma^2-\frac{1}{4}\alpha^2(\gamma-\beta)^2}{4\alpha\beta\gamma-\alpha^2\gamma-\alpha\beta^2}\]
on a suitable convex polyhedron. Since $\alpha^4\le 4p^d$ we have $2\le\alpha\le\sqrt[4]{4p^d}$. In a similar way, we obtain the set
\[\mathcal{F}:=\{(\alpha,\beta,\gamma):2\le\alpha\le\sqrt[4]{4p^d},\ \alpha\le\beta\le\sqrt[3]{2p^d},\ \beta\le\gamma\le\sqrt{p^d}\},\]
which contains all feasible points. It can be shown that $f$ is in fact convex on $\mathcal{F}$. The vertices of $\mathcal{F}$ are
\begin{align*}
V_1&:=(2,2,2),\\
V_2&:=(2,2,\sqrt{p^d}),\\
V_3&:=(2,\sqrt[3]{2p^d},\sqrt[3]{2p^d}),\\
V_4&:=(\sqrt[4]{4p^d},\sqrt[4]{4p^d},\sqrt[4]{4p^d}).
\end{align*}
We fix the value $m:=(p^d+27)/4$.
A calculation shows $f(V_2)\le m$ for $p^d\ge 22$, $f(V_3)\le m$ for $p^d\ge 20$, and $f(V_4)\le m$ for $p^d\ge 23$. If $p^d\le 12$, then $V_1$ is the only feasible point. If $p^d\le 17$, there is only one other feasible point $(\alpha,\beta,\gamma)=(2,2,3)$ beside $V_1$. In this case $f(2,2,3)\le m$ for $p^d\ge 14$. For $p^d=13$ we have 
\[\alpha+\beta+\gamma+\delta-a_{13}-a_{14}-a_{34}\le7=\frac{13-1}{4}+4.\]
For $p^d\le 20$ there is one additional point $(\alpha,\beta,\gamma)=(2,3,3)$, which satisfies $f(2,3,3)\le m$. In the remaining cases there is another additional point $(\alpha,\beta,\gamma)=(3,3,3)$. For this we get $f(3,3,3)\le m$ if $p^d\ge 22$. Since $21$ is no prime power, we can consider $f(V_1)=p^d/4+7$ now. If $p>2$, then $p^d/4$ is no integer. In this case
\[\alpha+\beta+\gamma+\delta-a_{13}-a_{14}-a_{34}\le[f(V_1)]-3=\frac{p^d-1}{4}+4,\]
where $[f(V_1)]$ is the largest integer below $f(V_1)$.
Thus, let us assume $\delta=p^d/4+1$ (and $p=2$). 
With the help of a computer one can show that up to equivalence only the possibility
\begin{equation}\label{threeposs}
A=\begin{pmatrix}2&1&0&-1\\1&2&1&0\\0&1&2&1\\-1&0&1&\delta\end{pmatrix}
\end{equation}
has the right determinant (see also the remark following the proof). By considering the corresponding decomposition matrix, one can easily deduce: 
\[k(B)\le\delta+2\le\frac{p^d-1}{l(B)}+l(B).\]
Now it remains to check, that $f$ does not exceed $m$ on other points of $\mathcal{F}$ (this is necessary, since $f(V_1)>m$). For that, we exclude $V_1$ from $\mathcal{F}$ and form a smaller polyhedron. Since only integral values for $\alpha,\beta,\gamma$ are allowed, we get three new vertices:
\begin{align*}
V_5&:=(2,2,3),\\
V_6&:=(2,3,3),\\
V_7&:=(3,3,3)
\end{align*}
But these points were already considered. This finishes the first part of the proof.
The second part follows easily, since for blocks with cyclic defect groups equality holds.
\end{proof}

Olsson showed the $k(B)$-conjecture in the case $l(B)\le 2$ in general (see Corollary~5 in \cite{OlssonIneq}). For $p=2$ he also proved this in the case $l(B)=3$ (see Corollary~7 in \cite{OlssonIneq}).
In the case $l(B)=5$ there is no inequality like \eqref{Barnes3}. However, one can use the so called “fundamental inequality” of quadratic forms
\[\alpha\beta\gamma\delta\epsilon\le 8p^d\]
(see \cite{Barnes}). Of course, the complexity increases rapidly with $l(B)$. For example, the matrix 
\[A=\begin{pmatrix}2&1&0&1&-1\\1&2&1&1&1\\0&1&2&1&-1\\1&1&1&2&1\\-1&1&-1&1&\epsilon\end{pmatrix}\]
with $\epsilon=p^d/4+9$ ($p=2$) has to be considered. We will demonstrate that such matrices cannot occur. For this let $l:=l(B)$ arbitrary, $a_{ii}=2$, and $a_{i,i+1}=1$ for $i=1,\ldots,l-1$. In the following we will speak of Cartan matrices and decomposition matrices always with respect to an arbitrary basic set. This means we can multiply the generalized decomposition matrix from the left with an orthogonal matrix and from the right with an invertible matrix.

The first two columns of the decomposition matrix $Q$ can be arranged in the form
\[\begin{pmatrix}1&.\\1&1\\.&1\\.&.\\\vdots&\vdots\\.&.\end{pmatrix}.\]
By the orthogonality relations, the first three columns cannot have the form
\[\begin{pmatrix}1&.&\pm1\\1&1&.\\.&1&1\\.&.&.\\\vdots&\vdots&\vdots\\.&.&.\end{pmatrix}\text{ or }\begin{pmatrix}1&.&-1\\1&1&1\\.&1&.\\.&.&.\\\vdots&\vdots&\vdots\\.&.&.\end{pmatrix}.\]
That means they have the form
\[\begin{pmatrix}1&.&.\\1&1&.\\.&1&1\\.&.&1\\.&.&.\\\vdots&\vdots&\vdots\\.&.&.\end{pmatrix}\text{ or }\begin{pmatrix}1&.&.\\1&1&1\\.&1&.\\.&.&1\\.&.&.\\\vdots&\vdots&\vdots\\.&.&.\end{pmatrix}.\]
However, both forms give rise to equivalent matrices $A$. 
Similarly, we may assume that the first $l-1$ columns of $Q$ have the form
\[\begin{pmatrix}1&.&\cdots&.\\1&1&\ddots&\vdots\\.&1&\ddots&.\\.&.&\ddots&1\\[1mm].&.&.&1\\.&.&.&.\\\vdots&\vdots&\vdots&\vdots\\.&.&.&.\end{pmatrix}.\]
(Now one can see that the $5\times 5$ matrix above cannot occur.)
If we add suitable multiples of the first $l-1$ columns to the last column, $Q$ becomes
\[\begin{pmatrix}1&.&\cdots&\cdots&.\\1&1&\ddots&&\vdots\\.&1&\ddots&\ddots&\vdots\\.&.&\ddots&1&.\\.&.&.&1&\ast\\.&.&.&.&\ast\\\vdots&\vdots&\vdots&\vdots&\vdots\\.&.&.&.&\ast\end{pmatrix}.\]
Thus, up to equivalence $A$ has the form
\[\begin{pmatrix}2&1&.&\ldots&.\\1&\ddots&\ddots&\ddots&\vdots\\.&\ddots&\ddots&1&.\\\vdots&\ddots&1&2&a\\.&\cdots&.&a&\epsilon\end{pmatrix}\]
with $a\ge 1$ (notice that this matrix does not have to be reduced).
This yields 
\[\epsilon=\frac{p^d+a^2(l-1)}{l}\hspace{1cm}\text{and}\hspace{1cm}k(B)\le l+\epsilon-a^2=\frac{p^d-a^2}{l}+l\le\frac{p^d-1}{l}+l.\]
It seems likely that this configuration allows the largest value for $k(B)$ in general. 

Fujii gives some sufficient conditions for $\det C=p^d$ in \cite{DetCartan}. We remark that $\det C$ can be determined locally with the notion of lower defect groups.

\section{Brauer's $k(B)$-conjecture for defect groups which are central extensions}

The knowledge of the Cartan matrix implies that $l(B)$ is already known. Since $k(B)-l(B)$ is determined locally, it might seems absurd to bound $k(B)$ in terms of Cartan invariants. 
Instead, it would be more useful if one can apply these bounds to blocks of subsections. In this sense the next lemma is an extension of \autoref{kulswada}.

\begin{Lemma}\label{majorast}
Let $(u,b)$ be a major subsection associated with the block $B$. Let $C_b=(c_{ij})$ be the Cartan matrix of $b$ up to equivalence. Then for every positive definite, integral quadratic form
$q(x_1,\ldots,x_{l(b)})=\sum_{1\le i\le j\le l(b)}{q_{ij}x_ix_j}$
we have
\[k(B)\le\sum_{1\le i\le j\le l(b)}{q_{ij}c_{ij}}.\]
\end{Lemma}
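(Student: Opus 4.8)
The plan is to reduce the statement to \autoref{kulswada} by connecting the ordinary characters of $B$ to the \emph{major} subsection $(u,b)$. The key structural fact I would invoke is that, because $(u,b)$ is a \emph{major} subsection, the defect of $b$ equals the defect $d$ of $B$; consequently every irreducible character $\chi\in\Irr(B)$ has nonzero generalized decomposition numbers along the column $d(u)$. More precisely, I would consider the column of generalized decomposition numbers $d(u)=(d^u_{\chi\phi})_{\chi\in\Irr(B),\,\phi\in\IBr(b)}$, which is an $k(B)\times l(b)$ integral (or, after choosing a basic set, suitably transformed) matrix $D_u$. The orthogonality relations for generalized decomposition numbers give exactly $D_u^{\text{T}}\overline{D_u}=C_b$ up to the appropriate base change, so $C_b$ plays the role for the subsection that the ordinary Cartan matrix plays in \autoref{kulshwada}.

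The heart of the argument is then a counting inequality analogous to the one underlying \autoref{kulswada}. Given a positive definite integral quadratic form $q$ with Gram matrix $Q=(q_{ij})$, I would form the sum $\sum_{\chi}q(d^u_{\chi 1},\ldots,d^u_{\chi l(b)})$, i.e. apply the form $q$ to each row of $D_u$ and add over all $\chi\in\Irr(B)$. On one hand, positive definiteness of $q$ forces each summand $q(d^u_{\chi\bullet})$ to be a positive real number; since $(u,b)$ is major, no row of $D_u$ vanishes, so each summand is at least the minimum value of $q$ on nonzero integer vectors, which is $\ge 1$. Summing over the $k(B)$ characters gives the lower bound $k(B)\le\sum_{\chi}q(d^u_{\chi\bullet})$. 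On the other hand, expanding $q$ bilinearly and using $D_u^{\text{T}}\overline{D_u}=C_b=(c_{ij})$ collapses the double sum over $\chi$ into the Cartan invariants, yielding $\sum_{\chi}q(d^u_{\chi\bullet})=\sum_{1\le i\le j\le l(b)}q_{ij}c_{ij}$. Combining the two gives the claimed inequality.

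The main obstacle I anticipate is handling the fact that the generalized decomposition numbers $d^u_{\chi\phi}$ are in general not rational but lie in a cyclotomic field $\mathbb{Z}[\zeta]$ where $\zeta$ is a root of unity of order equal to that of $u$. This means the quadratic form has to be applied to vectors over $\mathbb{Z}[\zeta]$, and the term $\overline{D_u}$ (complex conjugation of entries) is essential for the orthogonality relation and for positivity. I would address this by viewing $\mathbb{Z}[\zeta]$ as a free $\mathbb{Z}$-module and checking that $q$, extended Hermitian-linearly, still takes values $\ge 1$ on nonzero vectors — equivalently, that the real part of $q(d^u_{\chi\bullet})$ is a positive \emph{rational integer} bounded below by $1$. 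The subtlety is ensuring that the nonvanishing of the row (guaranteed because the subsection is major) together with integrality over $\mathbb{Z}[\zeta]$ really forces the value to be at least $1$; this is where the majorness hypothesis does the decisive work, and it is the step I would write out most carefully.

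Once these pieces are in place, the proof is essentially a transcription of the argument for \autoref{kulswada} with the ordinary decomposition matrix replaced by the generalized decomposition matrix $D_u$ of the major subsection, and I would present it in that parallel form to keep the exposition short.
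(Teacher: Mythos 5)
Your reduction---expanding $\sum_{\chi} d_\chi Q \overline{d_\chi}^{\mathrm{T}}$ via the orthogonality relation $\sum_\chi d_{\chi i}\overline{d_{\chi j}} = c_{ij}$ into $\sum_{i\le j} q_{ij}c_{ij}$, and using majorness to guarantee that no row $d_\chi$ of the generalized decomposition matrix vanishes---coincides exactly with the opening of the paper's proof, and these parts are correct. The gap is the step you yourself flag as decisive and propose to settle by integrality: the claim that a nonzero row $d_\chi\in\mathbb{Z}[\zeta]^{l(b)}$ must satisfy $d_\chi Q\overline{d_\chi}^{\mathrm{T}}\ge 1$. This is false whenever the order $p^n$ of $u$ is at least $5$: then $[\mathbb{Q}(\zeta):\mathbb{Q}]=p^{n-1}(p-1)>2$, so $\mathbb{Z}[\zeta]$ is a non-discrete (indeed dense) subring of $\mathbb{C}$, and a positive definite Hermitian form takes arbitrarily small positive values on nonzero vectors over it. Concretely, for $l(b)=1$, $q(x)=x^2$ and $\zeta=e^{2\pi i/7}$, the nonzero algebraic integer $d=1-\zeta$ gives $d\overline{d}=2-2\cos(2\pi/7)<0.76$. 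Your proposed repair is also not available: $d_\chi Q\overline{d_\chi}^{\mathrm{T}}$ is a totally positive element of the real cyclotomic field $\mathbb{Q}(\zeta+\zeta^{-1})$, not a rational integer (because of the halved off-diagonal entries of $Q$ it need not even be an algebraic integer), so there is no ``positive integer, hence at least $1$'' argument to run on a single row.

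What the paper does instead, and what is missing from your proposal, is a Galois averaging argument. Since $p$-conjugate characters again belong to $B$, for every $\gamma\in\mathcal{G}:=\operatorname{Gal}(\mathbb{Q}(\zeta)/\mathbb{Q})$ the vector $\gamma(d_\chi)$ is again a row of the generalized decomposition matrix; hence it suffices to prove the orbit bound $\sum_{\gamma\in\mathcal{G}}\gamma\bigl(d_\chi Q\overline{d_\chi}^{\mathrm{T}}\bigr)\ge|\mathcal{G}|$ rather than a bound for each individual row. The paper establishes this by writing $d_\chi=\sum_{m=0}^{f}a_m\zeta^m$ with integer coefficient vectors $a_m$ (at least one nonzero, by majorness), evaluating $\sum_{\gamma\in\mathcal{G}}\gamma(\zeta^j)$ via the cyclotomic polynomial (it equals $-p^{n-1}$ if $p^{n-1}\mid j$ and $0$ otherwise), and then, for odd $p$, regrouping the resulting expression into manifestly nonnegative integral terms of the shape $a_{m'}Qa_{m'}^{\mathrm{T}}+(a_m-a_{m'})Q(a_m-a_{m'})^{\mathrm{T}}\ge 1$; for $p=2$ the cross terms vanish and the bound is immediate. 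This averaging over the Galois orbit is precisely what compensates for the failure of the pointwise bound, so the cyclotomic bookkeeping is not a technicality one can transcribe away: without it your argument does not close.
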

\begin{proof}
Let us consider the generalized decomposition numbers $d^u_{ij}$ associated with the subsection $(u,b)$. 
We write $d_i:=(d_{i1}^u,d_{i2}^u,\ldots,d_{i,l(b)}^u)$ for $i=1,\ldots,k(B)$. Let $Q=(\widetilde{q}_{ij})_{i,j=1}^{l(b)}$ with 
\[\widetilde{q}_{ij}:=\begin{cases}q_{ij}&\text{if }i=j,\\q_{ij}/2&\text{if }i\ne j\end{cases}.\] 
Then we have
\[\sum_{1\le i\le j\le l(b)}{q_{ij}c_{ij}}=\sum_{1\le i\le j\le l(b)}{\sum_{r=1}^{k(B)}{q_{ij}d^u_{ri}\overline{d^u_{rj}}}}=\sum_{r=1}^{k(B)}{d_rQ\overline{d_r}^{\text{T}}},\]
and it suffices to show 
\[\sum_{r=1}^{k(B)}{d_rQ\overline{d_r}^{\text{T}}}\ge k(B).\]
For this, let $p^n$ be the order of $u$. Then $d^u_{ij}$ is an integer of the $p^n$-th cyclotomic field $\mathbb{Q}(\zeta)$ for $\zeta:=e^{2\pi i/p^n}$. It is known that $1,\zeta,\zeta^2,\ldots,\zeta^f$ with $f=p^{n-1}(p-1)-1$ form a basis for the ring of integers of $\mathbb{Q}(\zeta)$. 
We fix a row index $i\in\{1,\ldots,k(B)\}$ of the generalized decomposition matrix. Then there are integers $a_j^0,\ldots,a_j^f\in\mathbb{Z}$ such that $d_{ij}^u:=a_j^0+a_j^1\zeta+\ldots+a_j^f\zeta^f$ for $j=1,\ldots,l(b)$. We define $d:=d_i$ and $a_m:=(a_1^m,a_2^m,\ldots,a_{l(b)}^m)$ for $m=0,\ldots,f$. Since $(u,b)$ is major, at least one of the numbers $a_j^m$ does not vanish. 
Let $\mathcal{G}$ be the Galois group of $\mathbb{Q}(\zeta)$ over $\mathbb{Q}$. Then it is known that for every $\gamma\in\mathcal{G}$ there is an index $i'\in\{1,\ldots,k(B)\}$ such that $\gamma(d):=(\gamma(d_{i1}^u),\ldots,\gamma(d_{i,l(b)}^u))=(d_{i'1}^u,\ldots,d_{i',l(b)}^u)$.
Thus, it suffices to show
\[\sum_{\gamma\in\mathcal{G}}{\gamma(d)Q\overline{\gamma(d)}^{\text{T}}}=\sum_{\gamma\in\mathcal{G}}{\gamma(dQ\overline{d}^\text{T})}\ge|\mathcal{G}|=f+1.\]
We have
\begin{align*}
\sum_{\gamma\in\mathcal{G}}{\gamma(dQ\overline{d}^\text{T})}&=\sum_{\gamma\in\mathcal{G}}{\gamma\Biggl(\sum_{i=0}^f{a_iQa_i^{\text{T}}}+\sum_{j=1}^f{\sum_{m=0}^{f-j}{a_mQa_{m+j}^{\text{T}}(\zeta^j+\overline{\zeta}^j)}}\Biggr)}\\
&=(f+1)\sum_{i=0}^f{a_iQa_i^{\text{T}}}+2\sum_{j=1}^f{\sum_{m=0}^{f-j}{a_mQa_{m+j}^{\text{T}}\sum_{\gamma\in\mathcal{G}}{\gamma(\zeta^j)}}}.
\end{align*}
The $p^m$-th cyclotomic polynomial $\Phi_{p^m}$ has the form
\[\Phi_{p^m}=X^{p^{m-1}(p-1)}+X^{p^{m-1}(p-2)}+\ldots+X^{p^{m-1}}+1.\]
This gives
\[\sum_{\gamma\in\mathcal{G}}{\gamma(\zeta^j)}=\begin{cases}-p^{n-1}&\text{if }p^{n-1}\mid j\\0&\text{else}\end{cases}\]
for $j\in\{1,\ldots,f\}$. It follows that
\begin{align}
\sum_{\gamma\in\mathcal{G}}{\gamma(dQ\overline{d}^\text{T})}&=(f+1)\sum_{i=0}^f{a_iQa_i^{\text{T}}}-2p^{n-1}\sum_{j=1}^{p-2}{\hspace{2mm}\sum_{m=0}^{f-jp^{n-1}}{a_mQa_{m+p^{n-1}j}^{\text{T}}}}\nonumber\\
&=p^{n-1}\Biggl((p-1)\sum_{i=0}^f{a_iQa_i^{\text{T}}}-2\sum_{j=1}^{p-2}{\hspace{2mm}\sum_{m=0}^{f-jp^{n-1}}{a_mQa_{m+p^{n-1}j}^{\text{T}}}}\Biggr).\label{secondsum}
\end{align}
For $p=2$ the claim follows immediately, since then $f+1=2^{n-1}$. Thus, suppose $p>2$. Then we have
\[\bigl\{0,1,\ldots,f-jp^{n-1}\bigr\}\mathbin{\dot{\cup}}\bigl\{(p-1-j)p^{n-1},(p-1-j)p^{n-1}+1,\ldots,f\bigr\}=\{0,1,\ldots,f\}\]
for all $j\in\{1,\ldots,p-2\}$. This shows that every row $a_m$ occurs exactly $p-2$ times in the second sum of \eqref{secondsum}. Hence,
\[\sum_{\gamma\in\mathcal{G}}{\gamma(dQ\overline{d}^\text{T})}=p^{n-1}\Biggl(\sum_{i=0}^f{a_iQa_i^{\text{T}}}+\sum_{j=1}^{p-2}{\hspace{2mm}\sum_{m=0}^{f-jp^{n-1}}{(a_m-a_{m+jp^{n-1}})Q(a_m-a_{m+jp^{n-1}})^{\text{T}}}}\Biggr).\]
Now assume that $a_m$ does not vanish for some $m\in\{0,\ldots,f\}$. Then we have $a_mQa_m^{\text{T}}\ge 1$, since $Q$ is positive definite. 
Again, $a_m$ occurs exactly $p-2$ times in the second sum. Let $a_m-a_{m'}$ (resp. $a_{m'}-a_m$) be such an occurrence. Then we have
\[a_{m'}Qa_{m'}^{\text{T}}+(a_m-a_{m'})Q(a_m-a_{m'})^{\text{T}}\ge 1.\]
Now the claim follows easily.
\end{proof}

The rest of this paper consists of applications of the last lemma mainly in the case $p=2$.
Landrock has shown that Brauer's $k(B)$-conjecture holds for $2$-blocks with defect $3$ (see \cite{Landrock2}). The next theorem generalizes this. 

\begin{Theorem}\label{centext}
Brauer's $k(B)$-conjecture holds for defect groups which are central extensions of metacyclic $2$-groups by cyclic groups. In particular the $k(B)$-conjecture holds for abelian defect $2$-groups of rank at most $3$.
\end{Theorem}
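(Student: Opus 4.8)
The plan is to reduce the statement to the case of a metacyclic defect group by passing to a central major subsection. Write $D$ as a central extension $1\to Z\to D\to M\to 1$ with $Z=\langle u\rangle$ cyclic and central, and $M\cong D/Z$ metacyclic. Since $u\in\Z(D)\le D\le\C_G(u)$, there is a block $b$ of $\C_G(u)$ with $b^G=B$ and defect group $D$; hence $(u,b)$ is a major subsection. Because $u$ is central in $\C_G(u)$, the normal subgroup $\langle u\rangle$ lies in its centre, so $b$ dominates a block $\bar b$ of $\C_G(u)/\langle u\rangle$ with defect group $\bar D:=D/\langle u\rangle\cong M$; here $l(b)=l(\bar b)$ and the Cartan matrices satisfy $C_b=|Z|\,C_{\bar b}$.

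First I would feed this into \autoref{majorast}. For every positive definite integral quadratic form $q=\sum q_{ij}x_ix_j$ it gives
\[k(B)\le\sum_{1\le i\le j\le l(b)}q_{ij}(c_b)_{ij}=|Z|\sum_{1\le i\le j\le l(\bar b)}q_{ij}(c_{\bar b})_{ij},\]
where the two index ranges agree since $l(b)=l(\bar b)$. As $|D|=|Z|\,|\bar D|$, Brauer's conjecture $k(B)\le|D|$ follows as soon as we produce a single positive definite integral form with
\[\sum_{1\le i\le j\le l(\bar b)}q_{ij}(c_{\bar b})_{ij}\le|\bar D|.\]
Thus everything is reduced to the following assertion: for an arbitrary $2$-block $\bar b$ with metacyclic defect group $\bar D$ the last inequality holds for some positive definite integral form $q$.

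To establish this metacyclic case I would use the known block theory of metacyclic $2$-groups. The inertial quotient of $\bar b$ is a $2'$-group acting faithfully on $\bar D/\Phi(\bar D)$, which is cyclic or elementary abelian of order $4$; hence it is trivial or cyclic of order $3$, and by the classification of such blocks $l(\bar b)\le 3$. If $l(\bar b)=1$ the block is nilpotent, $C_{\bar b}=(|\bar D|)$, and $q=X_1^2$ gives equality. Otherwise the Cartan matrix of $\bar b$ is available explicitly from the treatment of cyclic, abelian rank-$2$, dihedral, semidihedral, quaternion and the remaining metacyclic defect groups. Whenever $\det C_{\bar b}=|\bar D|$ one may invoke \autoref{detCD} directly: since $l(\bar b)\le 4$ it yields $\sum c_{ii}-\sum c_{i,i+1}\le(|\bar D|-1)/l(\bar b)+l(\bar b)\le|\bar D|$, so the standard form of \eqref{KW} suffices. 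The cases with $\det C_{\bar b}\ne|\bar D|$ are settled by selecting a form adapted to the explicit reduced Cartan matrix and checking the inequality by hand (or with GAP).

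The main obstacle is precisely this last verification: one must control $C_{\bar b}$ for every metacyclic type together with every admissible fusion system, and in each case exhibit a positive definite integral form respecting the bound $|\bar D|$ — in particular ruling out the large off-diagonal reduced matrices that already caused trouble in the proof of \autoref{detCD}. The final assertion about abelian defect groups then follows at once: an abelian $2$-group of rank at most $3$ has the form $C_{2^a}\times C_{2^b}\times C_{2^c}$, and splitting off the first cyclic factor as $Z$ exhibits it as a central extension of the rank-$2$ (hence metacyclic) group $C_{2^b}\times C_{2^c}$ by the cyclic group $C_{2^a}$, so the general result applies.
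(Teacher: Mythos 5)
Your reduction is exactly the one in the paper: choose $\langle u\rangle\le\Z(D)$ with $D/\langle u\rangle$ metacyclic, pass to the major subsection $(u,b)$, use domination ($l(b)=l(\bar b)$ and $C_b=|\langle u\rangle|\,C_{\bar b}$), and feed this into \autoref{majorast}, so that everything reduces to exhibiting, for each $2$-block $\bar b$ with metacyclic defect group $\bar D$, one positive definite integral form $q$ with $\sum_{i\le j} q_{ij}(c_{\bar b})_{ij}\le|\bar D|$. Up to this point your argument is correct and matches the paper. Two small inaccuracies: $l(\bar b)=1$ does not imply that $\bar b$ is nilpotent (the converse holds), although the conclusion $C_{\bar b}=(|\bar D|)$ is still valid because $|\bar D|$ is always an elementary divisor of the Cartan matrix; and \autoref{detCD} as \emph{stated} bounds $k(B)$, not the linear form $\sum c_{ii}-\sum c_{i,i+1}$, so you must invoke its \emph{proof} (which, for $l\le 3$, does bound that form for a suitable reduced matrix in a basic set), exactly as the paper is careful to do.

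The genuine gap is the metacyclic verification itself, which you explicitly defer as ``the main obstacle''. Your inference ``$e(\bar b)\in\{1,3\}$, hence by the classification of such blocks $l(\bar b)\le 3$ and the Cartan matrices are explicitly available'' has no basis in the literature independent of precisely the ingredient you are missing: the classification of fusion systems on metacyclic $2$-groups (the author's result \cite{Sambale}), which shows that a \emph{non-nilpotent} $2$-block with metacyclic defect group can only have dihedral, semidihedral, quaternion, or homocyclic $C_{2^r}\times C_{2^r}$ defect groups. This is what collapses ``every metacyclic type together with every admissible fusion system'' to a finite list where the Cartan data are actually known: Brauer \cite{Brauer} for dihedral groups ($\det C=|\bar D|$ and $l\le 3$, so the proof of \autoref{detCD} applies), Erdmann's tables \cite{Erdmann} for semidihedral and quaternion groups, and --- a second point your sketch glosses over --- the Usami--Puig perfect isometry for $C_{2^r}\times C_{2^r}$ with $e(\bar b)=3$, which for $p=2$ is not explicitly proved in \cite{Usami23I,Usami23II,UsamiZ4} and required a separate argument from the author's thesis before one can conclude $\det C=|\bar D|$ and $l(\bar b)\in\{1,3\}$. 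Without these inputs your case analysis cannot be completed; as it stands, the proposal is an accurate reduction plus an honest restatement of the remaining problem rather than a proof of it.
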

\begin{proof}
Let $B$ be a block with defect group $D$ as in the statement of the theorem. Choose $\langle u\rangle\le\Z(D)$ such that $D/\langle u\rangle$ is metacyclic. For a $B$-subsection $(u,b)$ the block $b$ dominates a block $\overline{b}$ of $\C_G(u)/\langle u\rangle$ with defect group $D/\langle u\rangle$. If $C$ is the Cartan matrix of $\overline{b}$, then $|\langle u\rangle|C$ is the Cartan matrix of $b$.

Hence, by \autoref{majorast} it suffices to show 
\[\sum_{i=1}^{l(B)}{c_{ii}}-\sum_{i=1}^{l(B)-1}{c_{i,i+1}}\le |D|\]
for every $2$-block $B$ with metacyclic defect group $D$ and Cartan matrix $C=(c_{ij})$. If $D$ is dihedral, then $\det C=|D|$ and $l(B)\le 3$ (see \cite{Brauer}). Thus, in this case the claim follows from the proof of \autoref{detCD}. If $D$ is a semidihedral or quaternion group, one can use the tables in \cite{Erdmann} to show the claim (these cases can also be done by the method of the proof of \autoref{detCD} and the fact that the elementary divisors of $C$ are contained in $\{1,2,|D|\}$). 
Now assume $D\cong C_{2^r}\times C_{2^r}=:C_{2^r}^2$ for some $r\in\mathbb{N}$. Then the inertial index $e(B)$ is $1$ or $3$. In case $e(B)=1$, we also have $l(B)=1$, and the claim follows. Thus, we may assume $e(B)=3$. Then by the method of Usami and Puig (see \cite{Usami23I,Usami23II,UsamiZ4}) there is a perfect isometry between $B$ and the principal block of $D\rtimes C_3$. Usami and Puig did not provide an explicit proof for $p=2$, but the author has shown as another part of his PhD thesis that the perfect isometry indeed exists at least in this special case. Using this, we see that $\det C=|D|$ and $l(B)\in\{1,3\}$. Hence, the claim follows as before. By the result of \cite{Sambale}, we are done.
\end{proof}

We note that Brauer has proved the $k(B)$-conjecture for abelian defect groups of rank $2$ and arbitrary primes $p$ (see (7D) in \cite{BrauerBlSec2}). The smallest $2$-group which does not satisfy the hypothesis of \autoref{centext} is the elementary abelian group of order $16$. However, this group can be handled as well.

\begin{Theorem}\label{elem8}
Brauer's $k(B)$-conjecture holds for defect groups which contain a central cyclic subgroup of index at most $8$.
\end{Theorem}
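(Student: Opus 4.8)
The plan is to mimic the proof of \autoref{centext}. Let $B$ have defect group $D$ containing a central cyclic subgroup $\langle u\rangle$ with $[D:\langle u\rangle]\le 8$. Since $u\in\Z(D)$, the centralizer $\C_D(u)=D$, so the subsection $(u,b)$ is major, and $b$ dominates a block $\overline b$ of $\C_G(u)/\langle u\rangle$ with defect group $\overline D:=D/\langle u\rangle$ of order at most $8$; exactly as in \autoref{centext} the Cartan matrix of $b$ is $C_b=|\langle u\rangle|\,C_{\overline b}$. By \autoref{majorast} it therefore suffices to prove the following purely local statement: for every $2$-block $\overline b$ with defect group $\overline D$ of order at most $8$ and Cartan matrix $C_{\overline b}=(c_{ij})$ there is a positive definite integral quadratic form $q=\sum_{i\le j}q_{ij}x_ix_j$ with $\sum_{i\le j}q_{ij}c_{ij}\le|\overline D|$. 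Indeed, this yields $k(B)\le|\langle u\rangle|\,|\overline D|=|D|$.

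First I would dispose of the groups for which the Cartan matrix is essentially known. If $\overline D$ is cyclic or $\overline D\cong C_4\times C_2$, then $\operatorname{Aut}(\overline D)$ is a $2$-group, so $e(\overline b)=1$, hence $l(\overline b)=1$ and $C_{\overline b}=(|\overline D|)$; the form $q=x_1^2$ gives equality. If $\overline D\cong C_2^2$, then $e(\overline b)\in\{1,3\}$, and in the case $e(\overline b)=3$ the matrix $C_{\overline b}$ is equivalent to the Cartan matrix of the principal $2$-block of $A_4$, where the form underlying \eqref{KW} already attains $6-1-1=4=|\overline D|$. For $\overline D\cong D_8$ one has $\det C_{\overline b}=|\overline D|$ and $l(\overline b)\le 3$, so the bound follows from the proof of \autoref{detCD} as in \autoref{centext}; for $\overline D\cong Q_8$ the elementary divisors of $C_{\overline b}$ lie in $\{1,2,|\overline D|\}$ and one argues from the known Cartan matrices exactly as in the quaternion case of \autoref{centext}.

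The crucial case is $\overline D\cong C_2^3$. Here the inertial quotient is a $2'$-subgroup of $\GL(3,2)$, hence one of $1$, $C_3$, $C_7$ or $F_{21}:=C_7\rtimes C_3$, and in each case the block invariants (in particular $C_{\overline b}$ up to equivalence) are known, which I would recompute with GAP. For the quotients $1$, $C_3$, $C_7$ the matrix can be arranged so that the form underlying \eqref{KW} already attains $|\overline D|=8$: in the nilpotent case $C_{\overline b}=(8)$; for $C_3$ it is equivalent to $2$ times the $A_4$-matrix, giving $12-2-2=8$; and for $C_7$ it is equivalent to the $7\times 7$ matrix with all diagonal entries $2$ and all off-diagonal entries $1$, giving $14-6=8$. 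The genuinely new case is $F_{21}$, where $l(\overline b)=5$ and $\det C_{\overline b}=32$, so \autoref{detCD} does not apply and \eqref{KW} only yields $10$. Here I would exploit the full strength of \autoref{majorast}: with $C_{\overline b}$ equivalent to
\[\begin{pmatrix}2&0&0&1&1\\0&2&0&1&1\\0&0&2&1&1\\1&1&1&4&3\\1&1&1&3&4\end{pmatrix},\]
the non-chain positive definite form $q=x_1^2+x_2^2+x_3^2+x_4^2+x_5^2-x_1x_4-x_2x_4-x_3x_5-x_4x_5$ (a $D_5$-type form absorbing the large entry $c_{45}=3$) gives $\sum_{i\le j}q_{ij}c_{ij}=14-(1+1+1+3)=8=|\overline D|$.

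The main obstacle is precisely this last configuration. For the inertial quotient $F_{21}$ the bound \eqref{KW}, which sufficed throughout \autoref{centext} and \autoref{detCD}, overshoots $|\overline D|$, and because $l(\overline b)=5$ the reduction theory of \autoref{detCD} is unavailable; one must determine the exact Cartan matrix and then locate a suitable positive definite integral form, the decisive point being that \autoref{majorast} allows an arbitrary such form so that an off-diagonal entry exceeding $1$ can be handled by a tree-shaped rather than a path-shaped form. A secondary point to verify is that the inertial quotient of the dominated block $\overline b$ is indeed always one of the listed subgroups, so that this finite case analysis over the $2$-groups of order at most $8$ is genuinely exhaustive.
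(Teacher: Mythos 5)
Your overall strategy is exactly the paper's: reduce via block domination and \autoref{majorast} to the purely local statement that every $2$-block with defect group of order at most $8$ admits a positive definite integral form $q$ with $\sum q_{ij}c_{ij}\le|\overline D|$, dispose of the metacyclic groups as in \autoref{centext}, and recognize that the decisive difficulty for $C_2^3$ is an inertial quotient involving $F_{21}$, where the chain form of \eqref{KW} overshoots and a tree-shaped form is needed. Your $D_5$-type form is also correct: your candidate Cartan matrix is congruent to the paper's matrix for that case (replace the last two basis vectors $f_4,f_5$ of the paper's matrix by $f_5$ and $f_5-f_4$), and your form evaluates to $14-1-1-1-3=8$.

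There is, however, a genuine gap in the step ``in each case the block invariants are known, which I would recompute with GAP.'' A GAP computation can only certify specific groups, e.g.\ the principal block of $C_2^3\rtimes E$; the statement you need quantifies over \emph{all} blocks of \emph{all} finite groups with defect group $C_2^3$ and inertial quotient $E$, and pinning down the possible Cartan matrices of such blocks is precisely the hard content of the paper's proof (carried out by enumerating generalized decomposition numbers of the subsections and exploiting the orthogonality relations; for $e=3$ the paper even deliberately avoids citing Usami--Puig, since their method lacks a published proof for $p=2$). Concretely, your case list is not exhaustive: without the classification of finite simple groups one cannot assume $l(\overline b)=7$ when $e(\overline b)=7$, nor $l(\overline b)=5$ when $e(\overline b)=21$. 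The paper must, and does, also treat $e(\overline b)=7$ with $k=5$, $l=4$ (settled because then $\det C=8$, so the proof of \autoref{detCD} applies), exclude $l=3$ for $e(\overline b)=21$ via Landrock, and treat $e(\overline b)=21$ with $k=7$, $l=4$, where the Cartan matrix
\[\begin{pmatrix}3&1&-1&-1\\1&3&1&-1\\-1&1&3&1\\-1&-1&1&3\end{pmatrix}\]
again violates \eqref{KW} and requires yet another tailored positive definite form, namely the one attached to $\frac{1}{2}\bigl(\begin{smallmatrix}2&-1&1&0\\-1&2&-1&0\\1&-1&2&-1\\0&0&-1&2\end{smallmatrix}\bigr)$, which gives $12-4=8$. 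The remark following the paper's proof makes the point explicit: Kessar, Koshitani and Linckelmann later showed that the cases $k(B)\in\{5,7\}$ cannot occur, but only by using the classification, which this proof is designed to avoid; your proposal silently assumes their result.
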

\begin{proof}
Since every group of order $8$ is metacyclic or elementary abelian, it suffices to consider a block $B$ with defect group $D\cong C_2^3$ and Cartan matrix $C=(c_{ij})$. As in \autoref{centext} we show 
\begin{equation}\label{ineq}
\sum_{i=1}^{l(B)}{c_{ii}}-\sum_{i=1}^{l(B)-1}{c_{i,i+1}}\le 8.
\end{equation}
If $e(B)$ is $1$, then also $l(B)=1$, and the claim follows. 

Now let $e(B)=3$. (This case can be handled easily with the method of Usami and Puig. Due to the lack of an explicit proof in the case $p=2$ as said before, we do not use their method here.)
It is easy to show that there are four subsections $(1,B)$, $(u_1,b_1)$, $(u_2,b_2)$ and $(u_3,b_3)$ associated with $B$. Moreover, we may assume $l(b_1)=3$ and $l(b_2)=l(b_3)=1$. As usual, $b_1$ dominates a block of $\operatorname{C}_G(u_1)/\langle u_1\rangle$ with Klein four defect group. It follows that the Cartan matrix of $b_1$ is equivalent to 
\[\begin{pmatrix}4&2&2\\2&4&2\\2&2&4\end{pmatrix}.\]
Using this, is it easy to see that there is a basic set such that the generalized decomposition numbers associated with $u_i$ ($i=1,2,3$) have form
\[\begin{pmatrix}1&.&.&1&1\\1&.&.&1&-1\\1&1&.&-1&1\\1&1&.&-1&-1\\.&1&1&1&1\\.&1&1&1&-1\\.&.&1&-1&1\\.&.&1&-1&-1\end{pmatrix}.\]
By the orthogonality relations of generalized decomposition numbers there exists a matrix $S\in\operatorname{GL}(3,\mathbb{Q})$ such that the ordinary decomposition matrix $Q$ satisfies
\[Q=\begin{pmatrix}1&.&.\\-1&.&.\\.&1&.\\.&-1&.\\.&.&1\\.&.&-1\\-1&-1&-1\\1&1&1\end{pmatrix}S\]
Moreover, it is easy to see that all entries of $S$ are integral. It is well known that there exists a matrix $\widetilde{Q}\in\mathbb{Z}^{3\times 8}$ such that $\widetilde{Q}Q=1_3$. This shows $S\in\operatorname{GL}(3,\mathbb{Z})$. 
Hence $C$ has the form $S^{-\text{T}}Q^{\text{T}}QS^{-1}$ up to equivalence. 
Thus, the claim follows in this case.

Let $e(B)=7$. Then there are two subsections $(1,B)$ and $(u,b)$ with $k(B)-l(B)=l(b)=1$. Since $8$ is the sum of $k(B)$ integer squares, we must have $k(B)\in\{5,8\}$. By Corollary~1 in \cite{DetCartan}, we have $\det C=8$. Thus in the case $l(B)=4$, the claim follows from the proof of \autoref{detCD} (notice that this case contradicts Brauer's height zero conjecture). 
So we may assume $l(B)=7$. Then the generalized decomposition numbers corresponding to $u$ can be arranged in the form $(1,\ldots,1)^{\text{T}}$. Hence the ordinary decomposition matrix has the form
\[\begin{pmatrix}1&.&.&.&.&.&.\\-1&-1&.&.&.&.&.\\.&1&1&.&.&.&.\\.&.&-1&-1&.&.&.\\.&.&.&1&1&.&.\\.&.&.&.&-1&-1&.\\.&.&.&.&.&1&1\\.&.&.&.&.&.&-1\end{pmatrix},\]
and the claim follows.

Let $e(B)=21$. Then there are two subsections $(1,B)$ and $(u,b)$ with $k(B)-l(B)=l(b)=3$. In particular $l(B)\le 5$ (using \autoref{centext}). The theory of lower defect groups reveals that $2$ occurs at least twice as elementary divisor of $C$. This gives $l(B)\ge 3$. The case $l(B)=3$ contradicts Corollary~1.3 in \cite{Landrock2}. Now let $l(B)=4$ (again this case contradicts the height zero conjecture). Then the generalized decomposition numbers corresponding to $u$ have the form
\[\begin{pmatrix}1&.&1\\1&.&.\\1&1&1\\1&1&.\\.&1&1\\.&1&.\\.&.&1\end{pmatrix}.\]
That means the ordinary decomposition matrix becomes
\[\begin{pmatrix}1&.&.&.\\-1&-1&.&.\\.&.&-1&.\\.&1&1&.\\.&.&.&-1\\.&-1&.&1\\-1&.&1&1\end{pmatrix},\]
and the Cartan matrix has the form
\[C=\begin{pmatrix}3&1&-1&-1\\1&3&1&-1\\-1&1&3&1\\-1&-1&1&3\end{pmatrix}.\]
Unfortunately, this matrix does not satisfy inequality \eqref{ineq}. However, we can use \autoref{majorast} with the quadratic form $q$ corresponding to the positive definite matrix
\[\frac{1}{2}\begin{pmatrix}2&-1&1&.\\-1&2&-1&.\\1&-1&2&-1\\.&.&-1&2\end{pmatrix}.\]

Finally let $l(B)=5$. Then the generalized decomposition numbers corresponding to $u$ have the form
\[\begin{pmatrix}1&.&.\\1&.&.\\1&1&.\\1&1&.\\.&1&1\\.&1&1\\.&.&1\\.&.&1\end{pmatrix}\]
and the ordinary decomposition matrix becomes
\[\begin{pmatrix}1&.&.&.&.\\-1&.&.&.&-1\\.&1&.&.&1\\.&-1&.&.&.\\.&.&-1&.&-1\\.&.&1&.&.\\.&.&.&1&1\\.&.&.&-1&.\end{pmatrix}.\]
Thus, the Cartan matrix is
\[\begin{pmatrix}2&.&.&.&1\\.&2&.&.&1\\.&.&2&.&1\\.&.&.&2&1\\1&1&1&1&4\end{pmatrix}.\]
In this case we can use \autoref{majorast} with the quadratic form $q$ corresponding to the positive definite matrix
\[\frac{1}{2}\begin{pmatrix}2&1&.&.&-1\\1&2&.&.&-1\\.&.&2&.&-1\\.&.&.&2&-1\\-1&-1&-1&-1&2\end{pmatrix}.\qedhere\]
\end{proof}

Recently, Kessar, Koshitani and Linckelmann have proven that the cases $k(B)=5$ and $k(B)=7$ in the proof above cannot occur (see \cite{KKL}). However, their proof uses the classification of finite simple groups. 

We deduce a corollary.

\begin{Corollary}\label{defect4}
Brauer's $k(B)$-conjecture holds for $2$-blocks of defect at most $4$.
\end{Corollary}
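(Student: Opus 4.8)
The plan is to obtain \autoref{defect4} as an immediate consequence of \autoref{elem8}. Let $B$ be a $2$-block of defect $d\le 4$ with defect group $D$; then $|D|=2^d\le 16$. The entire task reduces to verifying that $D$ satisfies the hypothesis of \autoref{elem8}, namely that $D$ contains a central cyclic subgroup of index at most $8$. Once this is checked in all cases, the $k(B)$-conjecture for $B$ follows directly.

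First I would dispose of the case $d\le 3$. Here $|D|\le 8$, and the trivial subgroup $1\le\Z(D)$ is a central cyclic subgroup of index $|D|\le 8$. Thus \autoref{elem8} applies without any further argument.

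The only remaining case is $d=4$, so $|D|=16$. Since $D$ is a nontrivial $2$-group, its center $\Z(D)$ is nontrivial and therefore contains an involution $z$. The subgroup $\langle z\rangle\le\Z(D)$ is then central, cyclic of order $2$, and of index $16/2=8$. Hence \autoref{elem8} again yields the claim for $B$.

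I expect no genuine obstacle here: the point is simply that the index-$8$ condition of \autoref{elem8} is automatic for every $2$-group of order at most $16$, since such a group is either small enough that the trivial subgroup already has index at most $8$, or has order $16$ and then possesses a central involution generating a central cyclic subgroup of index exactly $8$. Accordingly, the corollary is a formal consequence of the preceding theorem, and no case analysis over the fourteen isomorphism types of groups of order $16$ is required.
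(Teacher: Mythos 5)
Your proof is correct and matches the paper's intended deduction: the paper states the corollary immediately after \autoref{elem8} with no separate argument, precisely because every $2$-group of order at most $16$ automatically contains a central cyclic subgroup of index at most $8$ (the trivial subgroup when $|D|\le 8$, or the subgroup generated by a central involution when $|D|=16$). Your observation that no case analysis over the isomorphism types of groups of order $16$ is needed is exactly right.
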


We note that Robinson showed $k(B)\le 2^5$ for every $2$-block of defect $4$ (see \cite{RobBrauerFeit}). 
For odd primes it is only known that the $k(B)$-conjecture holds for blocks of defect at most $2$. We improve this in the case $p=3$.

\begin{Theorem}
Brauer's $k(B)$-conjecture holds for defect groups which contain a central cyclic subgroup of index at most $9$.
\end{Theorem}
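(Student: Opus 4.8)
The plan is to repeat the reduction used for \autoref{centext} and \autoref{elem8}. Let $B$ have defect group $D$ with a central cyclic subgroup $\langle u\rangle\le\Z(D)$ of index $[D:\langle u\rangle]\le 9$, and let $(u,b)$ be the associated subsection; since $u$ is central, $(u,b)$ is major. Then $b$ dominates a block $\overline b$ of $\C_G(u)/\langle u\rangle$ with defect group $\overline D:=D/\langle u\rangle$ of order at most $9$, and the Cartan matrix of $b$ equals $|\langle u\rangle|\,C$, where $C=(c_{ij})$ is the Cartan matrix of $\overline b$. By \autoref{majorast} it therefore suffices to produce, for each possible $\overline b$, a positive definite integral quadratic form $q=\sum q_{ij}X_iX_j$ with
\[
\sum_{1\le i\le j\le l(\overline b)}q_{ij}c_{ij}\le|\overline D|,
\]
for then $k(B)\le|\langle u\rangle|\sum q_{ij}c_{ij}\le|\langle u\rangle|\,|\overline D|=|D|$. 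Writing $C=Q^{\text{T}}Q$ for the relevant part $Q$ of the decomposition matrix of $\overline b$, the left-hand side equals the sum of the values of $q$ on the $k(\overline b)$ rows of $Q$; since each of these values is a positive integer, the task is to keep the total at most $|\overline D|$.

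If $p\ne 3$ then $[D:\langle u\rangle]$ is $1$ or a prime, and if $p=3$ the only noncyclic possibility for $\overline D$ is $C_3\times C_3$; in every remaining case $\overline D$ is cyclic. For cyclic $\overline D$ the block $\overline b$ has a cyclic defect group, the $k(B)$-conjecture holds for $\overline b$, and the Brauer tree supplies a form with $\sum q_{ij}c_{ij}=k(\overline b)\le|\overline D|$, exactly as at the end of the proof of \autoref{detCD}. This disposes of everything except the case $p=3$, $\overline D\cong C_3\times C_3$, which is the heart of the theorem.

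So assume $p=3$ and $\overline D\cong C_3\times C_3$. The inertial index of $\overline b$ is a $3'$-subgroup $E$ of $\Aut(\overline D)\cong\GL(2,3)$, hence a $2$-group; up to conjugacy $E$ runs through the finitely many classes of $2$-subgroups of $\GL(2,3)$, of orders $1,2,4,8,16$. For each $E$ I would, in the manner of \autoref{elem8}, list the subsections of $\overline b$ and determine the generalized decomposition numbers up to the choice of a basic set; here every nonidentity subsection reduces to a block with cyclic defect group $C_3$, whose invariants are known, so the orthogonality relations pin down $Q$ and hence $C$ up to equivalence. The bound is then read off: if $l(\overline b)\le 2$ it follows from Olsson's result \cite{OlssonIneq}; if $\det C=9$ and $l(\overline b)\le 4$ (as for $E\cong C_4$) it follows from the computation in \autoref{detCD}; and for the Singer cycle $E\cong C_8$, where $C=I+J$ with $J$ the all-ones $8\times 8$ matrix, inequality~\eqref{KW} already yields exactly $9$. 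Since $k(\overline b)\le 9$ throughout, the total can be kept at most $9$; when $k(\overline b)=9$ this forces $q$ to take the value $1$ on every one of the nine rows of $Q$, so a form must be found by hand. For instance when $E\cong C_2\times C_2$ one computes
\[
C=\begin{pmatrix}4&2&2&1\\2&4&1&2\\2&1&4&2\\1&2&2&4\end{pmatrix},\qquad
\tfrac12\begin{pmatrix}2&-1&-1&1\\-1&2&0&-1\\-1&0&2&-1\\1&-1&-1&2\end{pmatrix},
\]
where $\det C=81$ (so \autoref{detCD} does not apply), and one checks that the positive definite integral form with the Gram matrix on the right takes the value $1$ on all nine rows, giving $\sum q_{ij}c_{ij}=9$.

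The main obstacle is the treatment of the inertial quotients with $l(\overline b)\ge 5$, namely $E\cong Q_8$ and $E\cong D_8$ (with $l(\overline b)=5$) and $E$ the semidihedral Sylow $2$-subgroup of $\GL(2,3)$ (with $l(\overline b)=7$). Here neither Olsson's inequality nor \autoref{detCD} applies, and---as the examples following \autoref{detCD} show---the plain inequality~\eqref{KW} can overshoot $9$. For these cases one must first determine $C$ exactly, which is the most laborious part of the generalized-decomposition-number bookkeeping, and then exhibit a positive definite integral quadratic form whose value on each row of $Q$ is $1$ when $k(\overline b)=9$ (as happens for the semidihedral quotient), or whose total stays at most $9$ when $k(\overline b)<9$. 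Constructing these forms, especially in the seven-variable case, is where the real work lies, and it is here that the GAP computations \cite{GAP4} mentioned in the abstract enter. Once every inertial quotient has been handled the inequality $k(B)\le|D|$ follows, completing the proof.
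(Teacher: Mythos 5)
Your plan coincides in structure with the paper's own proof: reduce via \autoref{majorast} and block domination to a block $\overline b$ with defect group $\overline D=D/\langle u\rangle$, observe that the only genuinely new case is $p=3$, $\overline D\cong C_3\times C_3$ (cyclic quotients force abelian defect groups of rank at most $2$, and for $p=2$ the index is automatically at most $8$, so \autoref{elem8} applies --- note that your claim that $[D:\langle u\rangle]$ is ``$1$ or a prime'' when $p\ne 3$ is false for $p=2$, where the index can be $4$ or $8$, but this is harmless for exactly that reason), and then run over the $2$-subgroups $E\le\GL(2,3)$ of orders $1,2,4,8,16$. Your general observations are sound: $\sum q_{ij}c_{ij}$ is the sum of the positive integer values of $q$ on the rows of $Q$, so $k(\overline b)=9$ forces value $1$ on every row; your Klein-four example (the $S_3\times S_3$ Cartan matrix with the accompanying positive definite form) checks out; and Olsson, \autoref{detCD} and \eqref{KW} do dispose of the cases you assign to them.

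The genuine gap is that your proposal stops exactly where the paper's proof begins in earnest. First, ``the orthogonality relations pin down $Q$ and hence $C$'' cannot simply be asserted: the paper needs Kiyota's results \cite{Kiyota} (Lemma~(1D) to force $k\in\{3,6,9\}$ for the regular actions of $C_8$ and $Q_8$; Propositions~(2F) and (2G) together with a result of Watanabe to force $(k,l)\in\{(9,5),(6,2)\}$ for $D_8$ and $(9,7)$ for the semidihedral case), the Usami--Puig perfect isometries \cite{Usami23I,UsamiZ4,UsamiZ2Z2} for $e(\overline b)\in\{2,4\}$ (where the Klein-four case involves a twisted group algebra; a nontrivial $2$-cocycle gives $l(\overline b)=1$, a possibility your $S_3\times S_3$ computation does not see), an analysis of $3$-conjugate columns (writing $d_1=a+b\zeta$ and solving $(a\mid a)=5$, $(b\mid b)=2$, $(a\mid b)=1$ to eliminate wrong determinants), and GAP enumerations \cite{GAP4}. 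Second, and decisively, in the two cases $E\cong D_8$ with $(k,l)=(9,5)$ and $E\cong SD_{16}$ with $(k,l)=(9,7)$ the Cartan matrices are
\[\begin{pmatrix}3&.&1&.&1\\.&3&1&.&1\\1&1&3&1&.\\.&.&1&3&1\\1&1&.&1&3\end{pmatrix}\qquad\text{and}\qquad\begin{pmatrix}2&1&.&.&.&.&1\\1&2&.&.&.&.&1\\.&.&2&1&.&.&1\\.&.&1&2&.&.&1\\.&.&.&.&2&1&1\\.&.&.&.&1&2&1\\1&1&1&1&1&1&3\end{pmatrix},\]
and inequality \eqref{KW}, applied in any ordering and with any sign changes, gives at best $11$ and $10$ respectively, both exceeding $9$; so tailor-made positive definite forms taking value $1$ on all nine rows of $Q$ must actually be exhibited, which the paper does explicitly and your proposal does not. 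Until those Cartan matrices are determined and those forms are produced, you have the paper's strategy, but not a proof.
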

\begin{proof}
It suffices to consider blocks $B$ with elementary abelian defect groups $D$ of order $9$. For this, we use the work \cite{Kiyota} by Kiyota. We have $e(B)\in\{1,2,4,8,16\}$. As usual, we may assume $e(B)>1$. We denote the Cartan matrix of $B$ by $C$.

\textbf{Case 1:} $e(B)=2$.\\
By \cite{Usami23I} we may assume that $G=D\rtimes C_2$ (observe that there are two essentially different actions of $C_2$ on $D$).
It is easy to show that $D$ is given by
\[\begin{pmatrix}
5&4\\4&5
\end{pmatrix}\text{ or }\begin{pmatrix}
6&3\\3&6
\end{pmatrix}.\]

\textbf{Case 2:} $e(B)=4$.\\
If the inertial group $I(B)$ is cyclic, we obtain $C$ up to equivalence as follows
\[\begin{pmatrix}
3&2&2&2\\2&3&2&2\\2&2&3&2\\2&2&2&3
\end{pmatrix}\]
from \cite{UsamiZ4}. If $I(B)$ is noncyclic, we have to deal with twisted group algebras of $D\rtimes C_2^2$ as in \cite{UsamiZ2Z2}. Let $\gamma$ be the corresponding $2$-cocycle. Then there are just two possibilities for $\gamma$. In particular there are at most two equivalence classes for $C$. If $\gamma$ is trivial, the $C$ is equivalent to
\[\begin{pmatrix}
4&1&2&2\\1&4&2&2\\2&2&4&2\\2&2&1&4
\end{pmatrix}.\]
In the other case Kiyota gives the following example: Let $Q_8$ act on $D$ with kernel $\Z(Q_8)$. Then we can take the nonprincipal block of $D\rtimes Q_8$ for $B$. In this case $l(B)=1$, so the claim follows.

\textbf{Case 3:} $I(B)\cong C_8$.\\
Then $I(B)$ acts regularly on $D\setminus\{1\}$. Thus, there are just two $B$-subsections $(1,B)$ and $(u,b)$ with $l(b)=1$. Kiyota did not obtain the block invariants in this case. Hence, we have to consider some possibilities. By Lemma~(1D) in \cite{Kiyota} we have $k(B)\in\{3,6,9\}$. Since $u$ is conjugate to $u^{-1}$ in $I(B)$, the generalized decomposition numbers $d^u_{ij}$ are integers. Suppose $k(B)=3$. Then the column corresponding to $(u,b)$ in the generalized decomposition matrix has the form $(\pm2,\pm2,\pm1)^{\text{T}}$. Hence, $C$ is equivalent to
\[\begin{pmatrix} 5&1\\1&2\end{pmatrix}.\]
In the case $k(B)=6$ the column corresponding to $(u,b)$ is given by $(\pm2,\pm1,\pm1,\pm1,\pm1,\pm1)^{\text{T}}$, and $C$ is equivalent to
\[\begin{pmatrix}
2&1&1&1&0\\
1&2&1&1&1\\
1&1&2&1&1\\
1&1&1&2&1\\
0&1&1&1&3
\end{pmatrix}.\]
Finally in the case $k(B)=9$ we get the following Cartan matrix:
\[\begin{pmatrix}
2&1&1&1&1&1&1&1\\
1&2&1&1&1&1&1&1\\
1&1&2&1&1&1&1&1\\
1&1&1&2&1&1&1&1\\
1&1&1&1&2&1&1&1\\
1&1&1&1&1&2&1&1\\
1&1&1&1&1&1&2&1\\
1&1&1&1&1&1&1&2
\end{pmatrix}.\]

\textbf{Case 4:} $I(B)\cong D_8$.\\
By Proposition~(2F) in \cite{Kiyota} there are two possibilities: $(k(B),l(B))\in\{(9,5),(6,2)\}$. In both cases there are three subsections $(1,B)$, $(u_1,b_1)$ and $(u_2,b_2)$ with $l(b_1)=l(b_2)=2$. The Cartan matrix of $b_1$ and $b_2$ is given by $\bigl(\begin{smallmatrix}6&3\\3&6\end{smallmatrix}\bigr)$. In the case $k(B)=9$ and $l(B)=5$ the numbers $d^{u_1}_{ij}$ and $d^{u_2}_{ij}$ are integers (see Subcase~(a) on page 39 in \cite{Kiyota}). Thus, we may assume that the numbers $d^{u_1}_{ij}$ form the two columns
\[\begin{pmatrix}
1&1&1&1&1&1&.&.&.\\
.&.&.&1&1&1&1&1&1
\end{pmatrix}^{\text{T}}.\]
Now we use a GAP program to enumerate the possibilities for the columns $(d^{u_2}_{1j},d^{u_2}_{2j},\ldots,d^{u_2}_{9j})$ ($j=1,2$). It turns out that $C$ is equivalent to
\[\begin{pmatrix}
3&.&1&.&1\\
.&3&1&.&1\\
1&1&3&1&.\\
.&.&1&3&1\\
1&1&.&1&3
\end{pmatrix}\]
in all cases. Here we can take the quadratic form $q$ corresponding to the matrix
\[\frac{1}{2}\begin{pmatrix}
2&.&-1&.&-1\\
.&2&-1&1&-1\\
-1&-1&2&-1&1\\
.&1&-1&2&-1\\
-1&-1&1&-1&2
\end{pmatrix}\]
in \autoref{majorast}.

In the case $k(B)=6$ and $l(B)=2$ the columns $d_1:=(d^{u_1}_{11},d^{u_1}_{21},\ldots,d^{u_1}_{61})$ and $d_2:=(d^{u_1}_{12},d^{u_1}_{22},\ldots,d^{u_1}_{62})$ do not consist of integers only. We write $d_1=a+b\zeta$ with $a,b\in\mathbb{Z}^6$ and $\zeta:=e^{2\pi i/3}$. Then $d_2=a+b\overline{\zeta}$. The orthogonality relations show that
\begin{align*}
6&=(d_1\mid d_1)=(a\mid a)+(b\mid b)-(a\mid b),\\
3&=(d_1\mid d_2)=(a\mid a)+2(a\mid b)\zeta+(b\mid b)\overline{\zeta}=(a\mid a)-(b\mid b)+(2(a\mid b)-(b\mid b))\zeta.
\end{align*}
This shows $(a\mid a)=5$, $(b\mid b)=2$ and $(a\mid b)=1$. Hence, we can arrange $d_1$ in the following way: \[(1,1,1,1,1+\zeta,1+\overline{\zeta}=-\zeta)^{\text{T}}.\] 
It is easy to see that there are essentially two possibilities for the column $(d^{u_2}_{11},d^{u_2}_{21},\ldots,d^{u_2}_{61})^{\text{T}}$: 
\[(1+\zeta,-\zeta,-1,-1,1,1)^{\text{T}}\text{ or }(1+\zeta,-\zeta,-1,1,-1,-1)^{\text{T}}.\] 
The second possibility is impossible, since then $C$ would have determinant $81$. Thus, the first possibility occurs, and $C$ is
\[\begin{pmatrix}
5&1\\1&2
\end{pmatrix}\]
up to equivalence.

\textbf{Case 5:} $I(B)\cong Q_8$.\\
Then $I(B)$ acts regularly on $D\setminus\{1\}$. Hence, the result follows as in the case $I(B)\cong C_8$.

\textbf{Case 6:} $e(B)=16$.\\
Then there are two $B$-subsections $(1,B)$ and $(u,b)$ up to conjugation. This time we have $l(b)=2$. By Proposition~(2G) in \cite{Kiyota} and Watanabe we have $k(B)=9$ and $l(B)=7$. The Cartan matrix of $b$ is given by $\bigl(\begin{smallmatrix}6&3\\3&6\end{smallmatrix}\bigr)$. 
By way of contradiction, we assume that the columns $d_1:=(d^u_{11},d^u_{21},\ldots,d^u_{91})$ and $d_2:=(d^u_{12},d^u_{22},\ldots,d^u_{92})$ are $3$-conjugate. Then an argument as in Case~4 shows the contradiction $k(B)\le6$. Hence, the columns $d_1$ and $d_2$ have the form
\[\begin{pmatrix}
1&1&1&1&1&1&.&.&.\\
.&.&.&1&1&1&1&1&1
\end{pmatrix}^{\text{T}}.\]
Thus, we obtain $C$ as follows:
\[\begin{pmatrix}
2&1&.&.&.&.&1\\
1&2&.&.&.&.&1\\
.&.&2&1&.&.&1\\
.&.&1&2&.&.&1\\
.&.&.&.&2&1&1\\
.&.&.&.&1&2&1\\
1&1&1&1&1&1&3
\end{pmatrix}.\]
In this case we can take the quadratic form $q$ corresponding to the matrix
\[\frac{1}{2}\begin{pmatrix}
2&-1&.&.&.&.&-1\\
-1&2&.&.&.&.&.\\
.&.&2&-1&.&.&-1\\
.&.&-1&2&.&1&.\\
.&.&.&.&2&-1&-1\\
.&.&.&1&-1&2&.\\
-1&.&-1&.&-1&.&2
\end{pmatrix}\]
in \autoref{majorast}.
\end{proof}

\begin{Corollary}
Brauer's $k(B)$-conjecture holds for $3$-blocks of defect at most $3$.
\end{Corollary}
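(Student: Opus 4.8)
The plan is to reduce the statement to the preceding theorem by a purely group-theoretic observation about defect groups of small order. First I would note that a $3$-block of defect $d\le 3$ has a defect group $D$ of order $3^d\le 27$. The case $d=0$ is trivial, since then $D=1$ and $k(B)=1$. For $1\le d\le 3$ the group $D$ is a nontrivial $3$-group, so its center $\Z(D)$ is a nontrivial abelian $3$-group and therefore contains an element $z$ of order $3$.

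The cyclic subgroup $\langle z\rangle$ is central in $D$ and has index $[D:\langle z\rangle]=3^{d-1}\le 9$. Hence every such $D$ contains a central cyclic subgroup of index at most $9$, and the preceding theorem applies directly to yield Brauer's $k(B)$-conjecture for $B$.

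The only point that requires any thought is checking that the index bound is not exceeded, and this is tight precisely in the hardest case $d=3$. For the two nonabelian groups of order $27$ (the extraspecial groups of exponent $3$ and of exponent $9$) one has $\Z(D)\cong C_3$, so the central cyclic subgroup produced above has index exactly $9$; for the abelian groups of order $27$ and for all smaller defect groups the index is at most $9$ as well. Since $9$ is exactly the boundary covered by the theorem, there is no genuine obstacle: the corollary is immediate once one records that every $3$-group of order at most $27$ possesses a central subgroup of order $3$.
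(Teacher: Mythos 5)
Your proposal is correct and is exactly the reduction the paper intends: the corollary is stated without proof as an immediate consequence of the preceding theorem, since any nontrivial defect group $D$ with $|D|\le 27$ has a central element $z$ of order $3$, so $\langle z\rangle$ is a central cyclic subgroup of index $3^{d-1}\le 9$ (and the defect $0$ case is trivial). Nothing is missing.
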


Hendren obtained some informtation about blocks with nonabelian defect groups of order $p^3$ (see \cite{Hendren1,Hendren2}). In particular he showed that Brauer's $k(B)$-conjecture is satisfied in the exponent $p^2$ case. However, he was not able to prove this in the exponent $p$ case, even for $p=3$ (see section~6.1 in \cite{Hendren1}).

We add two other results in the same spirit.

\begin{Proposition}
Brauer's $k(B)$-conjecture holds for defect groups which are central extensions of $C_4\wr C_2$ by a cyclic group.
\end{Proposition}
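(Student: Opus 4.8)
The strategy follows the template established in \autoref{centext} and \autoref{elem8}: reduce to a major subsection computation and then apply \autoref{majorast}. Let $B$ be a block with defect group $D$, where $D$ is a central extension of $C_4\wr C_2$ by a cyclic group $\langle u\rangle$ with $\langle u\rangle\le\Z(D)$ and $D/\langle u\rangle\cong C_4\wr C_2$. For the $B$-subsection $(u,b)$, the block $b$ dominates a block $\overline{b}$ of $\C_G(u)/\langle u\rangle$ with defect group $C_4\wr C_2$, and if $C$ is the Cartan matrix of $\overline{b}$ then $|\langle u\rangle|C$ is the Cartan matrix of $b$. Since the bound in \autoref{majorast} scales correctly with the central factor (the group $C_4\wr C_2$ has order $32$, so $|D|=|\langle u\rangle|\cdot 32$), it suffices to prove
\[
\sum_{1\le i\le j\le l(\overline b)}q_{ij}c_{ij}\le 32
\]
for a suitable positive definite integral quadratic form $q$, for every $2$-block $\overline b$ with defect group $C_4\wr C_2$ and Cartan matrix $C=(c_{ij})$.

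First I would determine the possible fusion systems / inertial data on $P:=C_4\wr C_2$. This is a $2$-group of order $32$ with $|P'|$ and automorphism structure that heavily constrain $e(\overline b)$; in particular I expect $e(\overline b)\in\{1,3\}$ or a small list of values, governed by $\Aut(P)$ and the possible actions on the relevant quotients. For each admissible inertial index I would produce the subsections $(1,\overline b)$ and the nontrivial ones $(v,\beta)$, compute $l(\beta)$ and the Cartan matrices of the dominated blocks (these quotient blocks will have defect groups among $C_4\times C_2$, $C_4\wr C_2$ itself modulo a central involution, etc., whose invariants are already known from the metacyclic and rank-$2$ abelian cases handled earlier). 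From the generalized decomposition numbers attached to these subsections, together with the orthogonality relations, I would pin down the ordinary decomposition matrix $Q$ and hence $C=Q^{\mathrm T}Q$ up to equivalence, exactly as in the explicit matrix computations of \autoref{elem8}.

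Once $C$ is known up to equivalence in each case, the verification of the bound is the routine part: if $C$ already satisfies inequality~\eqref{KW} with value $\le 32$ I simply invoke \autoref{majorast} with the chain form $q=\sum c_{ii}X_i^2-\sum c_{i,i+1}X_iX_{i+1}$, and if not, I exhibit an explicit positive definite integral form $q$ (of the kind appearing repeatedly above, built from a suitable $A_n$- or $D_n$-type Gram matrix) for which $\sum_{i\le j}q_{ij}c_{ij}\le 32$. Determining these forms is a finite optimization over small lattices and is handled case by case, if necessary with the help of GAP as in the cited computations.

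The main obstacle I anticipate is the case-enumeration for the inertial quotients acting on $P=C_4\wr C_2$ and the attendant control of $k(\overline b)$ and $l(\overline b)$: unlike the elementary abelian situation covered by Kiyota's tables, there is no ready-made reference computing all block invariants for defect group $C_4\wr C_2$, so I must either extract them from the perfect-isometry / Usami--Puig machinery (with the $p=2$ existence supplied, as in \autoref{centext}, by the author's thesis) or reconstruct them directly from the fusion system and the orthogonality relations. The delicate point is ruling out spurious configurations of generalized decomposition numbers that would yield a Cartan matrix violating the bound for \emph{every} choice of $q$; as in Case~4 of \autoref{elem8}, such configurations are typically eliminated by a determinant or positivity obstruction, and I expect the same mechanism to apply here, possibly after a finite GAP-assisted search over the admissible columns.
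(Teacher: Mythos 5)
Your reduction is exactly the one the paper uses implicitly: choose $\langle u\rangle\le\Z(D)$ with $D/\langle u\rangle\cong C_4\wr C_2$, observe that for the major subsection $(u,b)$ the block $b$ dominates a block $\overline{b}$ of $\C_G(u)/\langle u\rangle$ whose Cartan matrix $C$ gets multiplied by $|\langle u\rangle|$, and then feed a suitable positive definite form into \autoref{majorast}, so that everything comes down to exhibiting, for every $2$-block $\overline{b}$ with defect group $C_4\wr C_2$ and Cartan matrix $(c_{ij})$, an integral positive definite $q$ with $\sum_{i\le j}q_{ij}c_{ij}\le 32$. The genuine gap is that you leave precisely this part --- the determination of the possible Cartan matrices up to equivalence --- as a program (enumerate inertial quotients, subsections, generalized decomposition numbers, GAP searches) rather than carrying it out, and you flag it yourself as the main obstacle. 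As written, the proposal therefore does not establish the statement: its entire mathematical substance is deferred to an unperformed case analysis, and nothing in the sketch guarantees in advance that this analysis ends in matrices satisfying the bound; that is the very content of the proposition.

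What closes the gap is a reference your "no ready-made reference" premise overlooks: Külshammer's paper on $2$-blocks with wreathed defect groups \cite{Kuelshammerwr}, which is already in the bibliography and whose section~IV works out the blocks with defect group $C_{2^n}\wr C_2$ (in particular $e(\overline{b})\in\{1,3\}$, as you guessed, together with the decomposition number and Cartan matrix data). The paper's proof of the proposition is literally the citation of that section, combined with the reduction you described. So your plan has the right shape --- reduce via \autoref{majorast} exactly as in \autoref{centext} and \autoref{elem8}, then check the quadratic-form inequality on the known Cartan matrices --- but without either that citation or an actually executed computation of the wreathed-defect-group case, your argument is incomplete where it matters most.
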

\begin{proof}
This follows from section~IV in \cite{Kuelshammerwr}.
\end{proof}

\begin{Theorem}
Let $Q$ be a minimal nonabelian $2$-group, but not of type $\langle x,y\mid x^{2^r}=y^{2^r}=[x,y]^2=[x,x,y]=[y,x,y]=1\rangle$ with $r\ge 3$, $[x,y]:=xyx^{-1}y^{-1}$ and $[x,x,y]:=[x,[x,y]]$ (these groups have order $2^{2r+1}\ge 128$). Then Brauer's $k(B)$-conjecture holds for defect groups which are central extensions of $Q$ by a cyclic group. Moreover, the $k(B)$-conjecture holds for all $2$-blocks with minimal nonabelian defect groups.
\end{Theorem}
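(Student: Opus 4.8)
The plan is to combine R\'edei's classification of minimal nonabelian $2$-groups with the subsection reduction provided by \autoref{majorast}. By that classification, $Q$ is either $Q_8$, or metacyclic of the form $\langle a,b\mid a^{2^m}=b^{2^n}=1,\ a^b=a^{1+2^{m-1}}\rangle$ with $m\ge2$, $n\ge1$, or non-metacyclic of the form $\langle x,y\mid x^{2^m}=y^{2^n}=[x,y]^2=[x,x,y]=[y,x,y]=1\rangle$ with $m\ge n\ge1$ and $(m,n)\ne(1,1)$; in the last case $c:=[x,y]$ is central of order $2$ and $Q/\langle c\rangle\cong C_{2^m}\times C_{2^n}$ is abelian of rank $2$. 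The excluded groups are exactly the non-metacyclic ones with $m=n=r\ge3$. The guiding principle is: whenever a defect group $D$ admits a central element $w$ such that $D/\langle w\rangle$ has a known Cartan theory, the major subsection $(w,b_w)$ yields a block $\overline{b_w}$ with defect group $D/\langle w\rangle$ and $C_{b_w}=|\langle w\rangle|\,C_{\overline{b_w}}$, so that by \autoref{majorast} it suffices to bound a quadratic form on $C_{\overline{b_w}}$ by $|D/\langle w\rangle|$.

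I would first dispose of the metacyclic $Q$ (including $Q_8$): here every central extension $D$ of $Q$ by a cyclic group is a central extension of a metacyclic group by a cyclic group, so \autoref{centext} applies verbatim and gives $k(B)\le|D|$. For the first assertion in the non-metacyclic case I would show, group-theoretically, that as long as $(m,n)\ne(r,r)$ with $r\ge3$ every central extension $D$ of $Q$ by a cyclic group again admits a central cyclic subgroup with metacyclic quotient, so that \autoref{centext} once more finishes the argument; the relevant metacyclic quotient is the lift of $Q/\langle c\rangle$. It is precisely this choice of a \emph{central} lift of $\langle c\rangle$ that can break down for $m=n=r\ge3$, which is the source of the exclusion.

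For the second assertion I would prove the conjecture for every block $B$ whose defect group is a minimal nonabelian $2$-group $Q$, now including the excluded family. The metacyclic $Q$ are covered as above, so let $Q$ be non-metacyclic and put $c:=[x,y]\in\Z(Q)$. The central element $c$ is always available, so I use the major subsection $(c,b_c)$: the block $\overline{b_c}$ has the abelian defect group $A:=Q/\langle c\rangle\cong C_{2^m}\times C_{2^n}$ of order $2^{m+n}$ and $C_{b_c}=2C_{\overline{b_c}}$. The inertial quotient of $\overline{b_c}$ is an odd-order subgroup of $\Aut(A)$. If $m\ne n$, then $\Aut(A)$ is a $2$-group, hence this quotient is trivial, $\overline{b_c}$ is nilpotent with $l(\overline{b_c})=1$ and $C_{\overline{b_c}}=(2^{m+n})$; if $m=n=r$, the only further possibility is $C_3$, and by the method of Usami and Puig used in the proof of \autoref{centext} for defect group $C_{2^r}^2$ with $e(B)=3$ one has $\det C_{\overline{b_c}}=2^{2r}$, $l(\overline{b_c})\in\{1,3\}$, and the Cartan matrix is equivalent to the one computed there. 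In every case inequality~\eqref{KW} applied to $C_{\overline{b_c}}$ gives $\sum_i c_{ii}-\sum_i c_{i,i+1}\le|A|$, so that \autoref{majorast} with the corresponding tridiagonal form yields
\[k(B)\le2\Bigl(\sum_i c_{ii}-\sum_i c_{i,i+1}\Bigr)\le2\,|A|=|Q|.\]

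The main obstacle is the group theory underlying the first assertion: one must classify, for each non-metacyclic $Q$, which central extensions $D$ of $Q$ by a cyclic group are themselves central extensions of a metacyclic group by a cyclic group, and verify that the only obstructions occur for $m=n=r\ge3$. This case analysis of the possible lifts of $\langle c\rangle$ into $\Z(D)$ is what separates the two assertions: for the defect group $Q$ itself the element $c$ lies in $\Z(Q)$ and the reduction to the abelian quotient $A$ goes through for \emph{all} $r$, whereas for a genuine central extension with $m=n=r\ge3$ no central lift of $c$ need exist, so the route through \autoref{centext} is unavailable and the first assertion must omit this family. A secondary point, exactly as in the proof of \autoref{elem8}, is justifying the Usami--Puig perfect isometry for $p=2$ in the case $m=n$ and checking that the plain tridiagonal form already attains the bound $|A|$, so that no sharper quadratic form (as needed elsewhere in \autoref{elem8}) is required here.
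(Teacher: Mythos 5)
Your proposal splits into two halves of very different quality, so let me address them separately. (For context: the paper's own ``proof'' is a one-line citation to \cite{Sambalemna}, so any self-contained argument is necessarily a different route.)

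The second assertion is handled correctly, and your route is a genuine, more elementary alternative to the citation: for a block whose defect group is the nonmetacyclic minimal nonabelian group $Q=\langle x,y\rangle$ itself, the commutator $c=[x,y]$ lies in $\Z(Q)$, the major subsection $(c,b_c)$ reduces via domination to a block with defect group $C_{2^m}\times C_{2^n}$, and the two cases ($m\ne n$: nilpotent since $\Aut(C_{2^m}\times C_{2^n})$ is a $2$-group; $m=n$: inertial index $1$ or $3$, Usami--Puig as in the proof of \autoref{centext}) give $k(B)\le 2\cdot 2^{m+n}=|Q|$ by \autoref{majorast}, for \emph{all} $m,n$ including the excluded family. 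This is sound, modulo the same reliance on the $p=2$ Usami--Puig isometry that the paper itself admits to.

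The first assertion, however, rests on a false group-theoretic claim, namely that for $(m,n)$ outside the excluded family every central extension $D$ of $Q$ by a cyclic group admits a central cyclic subgroup with metacyclic quotient. Counterexample: take $Q=\langle x,y\mid x^4=y^2=[x,y]^2=[x,x,y]=[y,x,y]=1\rangle$ of order $16$, so $(m,n)=(2,1)$ is allowed, and $D=Q\times\langle z\rangle$ with $z^2=1$. Then $\Z(D)=\langle x^2\rangle\times\langle c\rangle\times\langle z\rangle\cong C_2^3$ is elementary abelian, so every central cyclic subgroup has order at most $2$, and the quotients by the seven central involutions are: $D/\langle z\rangle\cong D/\langle cz\rangle\cong D/\langle x^2z\rangle\cong D/\langle x^2cz\rangle\cong Q$ (for the last three, the subgroup meets $Q$ trivially, so $Q$ embeds into the quotient of order $16$), $D/\langle c\rangle\cong C_4\times C_2^2$, and $D/\langle x^2\rangle\cong D/\langle x^2c\rangle\cong D_8\times C_2$. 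None of these, nor $D$ itself, is metacyclic; in particular the preimage of $\langle c\rangle$ is $\langle c,z\rangle\cong C_2^2$, not cyclic. So the reduction to \autoref{centext} already fails in the smallest admissible case, and consequently your explanation of the exclusion $m=n=r\ge 3$ (nonexistence of a central lift of $c$) is also wrong. This particular $D$ could be rescued by taking $u=c$ and invoking the Cartan matrix computed in the proof of \autoref{C4C2C2}, but that is an accident of small order: for, say, $Q$ of type $(m,n)=(3,2)$ and $D=Q\times C_2$, every quotient by a central cyclic subgroup is either abelian of rank $3$ and order $32$, or again of the form (minimal nonabelian)${}\times C_2$, and no result in this paper supplies the required Cartan matrix bound for blocks with such defect groups. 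What the first assertion genuinely needs is the determination, up to equivalence, of the Cartan matrices of blocks whose defect group is $Q$ itself---a block-theoretic computation in the spirit of \autoref{d8xc2} and \autoref{q8xc2}---which is precisely the content of \cite{Sambalemna}; your group-theoretic shortcut does not circumvent it.
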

\begin{proof}
This follows from another part of the author's PhD thesis (see \cite{Sambalemna}).
\end{proof}

\section{A counterexample}\label{countersection}

Külshammer and Wada have shown that there is not always a quadratic form $q$ such that we have equality in \autoref{kulswada}. However, it is not clear if there is always a quadratic form $q$ such that
\begin{equation}\label{countereq}
\sum_{1\le i\le j\le l(B)}{q_{ij}c_{ij}}\le p^d,
\end{equation}
where $d$ is the defect of the block $B$.
(Of course, this would imply the $k(B)$-conjecture in general.)

We consider an example. Let $D\cong C_2^4$, $A\in\operatorname{Syl}_3(\Aut(D))$, $G=D\rtimes A$ and $B=B_0(G)$. Then $k(B)=16$, $l(B)=9$, and the decomposition matrix $Q$ and the Cartan matrix $C$ of $B$ are 
\[Q=\left(\begin{array}{ccccccccc}
1&.&.&.&.&.&.&.&.\\
.&1&.&.&.&.&.&.&.\\
.&.&1&.&.&.&.&.&.\\
.&.&.&1&.&.&.&.&.\\
.&.&.&.&1&.&.&.&.\\
.&.&.&.&.&1&.&.&.\\
.&.&.&.&.&.&1&.&.\\
.&.&.&.&.&.&.&1&.\\
.&.&.&.&.&.&.&.&1\\
1&1&1&.&.&.&.&.&.\\
1&.&.&.&.&1&1&.&.\\
.&.&.&1&.&1&.&1&.\\
.&.&.&.&1&.&1&.&1\\
.&1&.&.&1&.&.&1&.\\
.&.&1&1&.&.&.&.&1\\
1&1&1&1&1&1&1&1&1
\end{array}\right),\ C=\left(\begin{array}{ccccccccc}
4&2&2&1&1&2&2&1&1\\
2&4&2&1&2&1&1&2&1\\
2&2&4&2&1&1&1&1&2\\
1&1&2&4&1&2&1&2&2\\
1&2&1&1&4&1&2&2&2\\
2&1&1&2&1&4&2&2&1\\
2&1&1&1&2&2&4&1&2\\
1&2&1&2&2&2&1&4&1\\
1&1&2&2&2&1&2&1&4
\end{array}\right).
\]
We will see that in this case there is no quadratic form $q$ such that inequality~\eqref{countereq} is satisfied. In order to do so, we assume that $q$ is given by the matrix $\frac{1}{2}A$ with $A=(a_{ij})\in\mathbb{Z}^{9\times 9}$. Since $A$ is symmetric, we only consider the upper triangular half of $A$. Then the rows of $Q$ are $1$-roots of $q$, i.\,e. $rAr^{\text{T}}=2$ for every row $r$ of $Q$ (see Corollary~B in \cite{KuelshammerWada}). If we take the first nine rows of $Q$, it follows that $a_{ii}=2$ for $i=1,\ldots,9$. Now assume $|a_{12}|\ge 2$. Then 
\[(1,-\operatorname{sgn}a_{12},0,\ldots,0)A(1,-\operatorname{sgn}a_{12},0,\ldots,0)^{\text{T}}\le 0,\] 
and $q$ is not positive definite. The same argument shows $a_{ij}\in\{-1,0,1\}$ for $i\ne j$. In particular there are only finitely many possibilities for $q$. Now the next row of $Q$ shows 
\[(a_{12},a_{13},a_{23})\in\{(-1,-1,0),(-1,0,-1),(0,-1,-1)\}.\] 
The same holds for the following triples 
\[(a_{16},a_{17},a_{67}),\ (a_{46},a_{48},a_{68}),\ (a_{57},a_{59},a_{79}),\ (a_{25},a_{28},a_{58}),\ (a_{34},a_{39},a_{49}).\] 
Finally the last row of $Q$ shows that the remaining entries add up to $4$:
\[a_{14}+a_{15}+a_{18}+a_{19}+a_{24}+a_{26}+a_{27}+a_{29}+a_{35}+a_{36}+a_{37}+a_{38}+a_{45}+a_{47}+a_{56}+a_{69}+a_{78}+a_{89}=4.\]
These are too many possibilities to check by hand. So we try to find a positive definite form $q$ with GAP. To decrease the computational effort, we enumerate all positive definite $7\times 7$ left upper submatrices of $A$ first. There are $140428$ of them, but none can be completed to a positive definite $9\times9$ matrix with the given constraints.

Nevertheless, we show that there is no corresponding decomposition matrix for $C$ with more than $16$ rows. For this let $B$ be a block with Cartan matrix equivalent to $C$. (By \autoref{defect4} the $k(B)$-conjecture already holds for $B$. We give an independent argument for this.) We enumerate the possible decomposition matrices $Q$ and count their rows. Since $Q\in\mathbb{Z}^{k(B)\times 9}$, every column of $Q$ has the form $(\pm1,\pm1,\pm1,\pm1,0,\ldots,0)^{\text{T}}$ for a suitable arrangement. Let us assume that the first two columns of $Q$ have the form
\[\begin{pmatrix}
1&1&1&1&0&\cdots&0\\
1&1&1&-1&0&\cdots&0
\end{pmatrix}^{\text{T}}.\]
Then the entries of $C$ show that there is no possibility for the fifth column of $Q$. Thus, we may assume that the first two columns of $Q$ are
\[\begin{pmatrix}
1&1&1&1&0&0&0&\cdots&0\\
0&0&1&1&1&1&0&\cdots&0
\end{pmatrix}^{\text{T}}.\]
Now we use a backtracking algorithm with GAP to show that $Q$ has at most $16$ rows.

Unfortunately, this method does not carry over to major subsections. For if we multiply $C$ by a $2$-power (namely the order of a $2$-element), the corresponding (generalized) decomposition matrices can be entirely different. 

\section{$2$-blocks with defect $5$}

In order to proof Brauer's $k(B)$-conjecture for $2$-blocks of defect $5$, we discuss the central extensions of groups of order $16$ by cyclic groups. We start with the abelian (and nonmetacyclic) groups of order $16$. In the next proposition we have to exclude one case, as the last section has shown. Moreover, we use the work of Kessar, Koshitani and Linckelmann (and thus the classification) in the proof. We have not checked if it is possible to avoid the classification by considering more (virtually impossible) cases. For this reason, we will also freely use the method of Usami and Puig.

\begin{Proposition}\label{elem16}
Let $B$ be a block with a defect group which is a central extension of an elementary abelian group of order $16$ by a cyclic group. If $9\nmid e(B)$, then Brauer's $k(B)$-conjecture holds for $B$.
\end{Proposition}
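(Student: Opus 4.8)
The plan is to reduce, exactly as in \autoref{centext} and \autoref{elem8}, to the core case of a block $B$ whose defect group $D$ is elementary abelian of order $16$. Choosing a central element $u$ with $D/\langle u\rangle$ elementary abelian of order $16$, for any subsection $(u,b)$ the block $b$ dominates a block $\overline{b}$ with defect group $D/\langle u\rangle$; since $|\langle u\rangle|C$ is the Cartan matrix of $b$ whenever $C$ is that of $\overline{b}$, \autoref{majorast} lets us work purely with the invariants of blocks with defect group $C_2^4$. Thus it suffices to verify, for every such $B$, that some positive definite integral quadratic form $q$ satisfies $\sum_{1\le i\le j\le l(B)}q_{ij}c_{ij}\le 16$, where $C=(c_{ij})$ is the Cartan matrix of $B$ up to equivalence. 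The hypothesis $9\nmid e(B)$ restricts the possible inertial indices $e(B)$, and the group $\Aut(C_2^4)\cong\GL(4,2)\cong A_8$ allows only finitely many odd-order inertial quotients (up to conjugacy, subgroups of order dividing $|A_8|$ whose order is coprime to $2$ and not divisible by $9$), so the strategy is to run through these cases one by one.

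First I would dispose of the trivial and small cases: $e(B)=1$ forces $l(B)=1$ and the bound is immediate. For each remaining inertial quotient I would determine the subsection structure and, using the method of Usami and Puig together with the results of Kessar, Koshitani and Linckelmann \cite{KKL} to pin down $(k(B),l(B))$, compute the Cartan matrix $C$ up to equivalence. The key computational engine is the one already used in \autoref{elem8} and in the preceding sections: I arrange the generalized decomposition numbers $d^u_{ij}$ for the nontrivial subsections into columns, use the orthogonality relations to constrain them (and, where $u$ and $u^{-1}$ are fused in $I(B)$, to force integrality), and thereby recover $Q$ and hence $C=Q^{\text{T}}Q$ up to equivalence. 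In each case I then exhibit an explicit positive definite matrix $\tfrac12 A$, presented as a half-integral form, realizing the bound of \autoref{majorast}; for the borderline cases where the naive form \eqref{KW} fails to give $\le 16$, one tailors a form as in the $e(B)=21$ and $l(B)\in\{4,5\}$ situations of \autoref{elem8}.

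The main obstacle, as the previous section \autoref{countersection} already illustrates, is that the principal block case $G=D\rtimes A$ with $A\in\operatorname{Syl}_3(\Aut(D))$ of order $9$ is genuinely problematic: there it was shown that \emph{no} positive definite form $q$ achieves $\sum q_{ij}c_{ij}\le p^d$. This is precisely why the hypothesis $9\nmid e(B)$ is imposed, and it cleanly excludes the one configuration for which the \autoref{majorast} approach cannot succeed. Among the remaining inertial quotients the obstruction is instead computational bulk: several cases produce large $l(B)$ (up to $9$) with Cartan matrices whose entries are only loosely constrained, so enumerating the admissible generalized decomposition matrices and certifying a suitable quadratic form is best delegated to GAP \cite{GAP4}, exactly as was done for the $C_2^4$ counterexample. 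Once the finitely many inertial quotients coprime to $3^2$ are handled and a witnessing form is recorded for each, the bound $\sum q_{ij}c_{ij}\le 16$ holds throughout, and \autoref{majorast} yields $k(B)\le 16=|D|$, establishing the $k(B)$-conjecture in all permitted cases.
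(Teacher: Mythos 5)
Your proposal is correct and takes essentially the same route as the paper: reduce via \autoref{majorast} and block domination to blocks with defect group $C_2^4$, run through the finitely many inertial indices permitted by $9\nmid e(B)$ (namely $e(B)\in\{1,3,5,7,15,21\}$), determine the Cartan matrices up to equivalence using Usami--Puig perfect isometries, the Kessar--Koshitani--Linckelmann results, orthogonality relations for generalized decomposition numbers and GAP enumeration, and finish by exhibiting suitable positive definite forms (including the tailored ones from \autoref{elem8} in the $e(B)=21$ case). You also correctly identify that the hypothesis $9\nmid e(B)$ is there precisely to exclude the configuration of \autoref{countersection}, for which no quadratic form can certify the bound.
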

\begin{proof}
Let $D$ be the defect group of $B$. We choose $u\in\Z(D)$ such that $D/\langle u\rangle$ is elementary abelian of order $16$. Let $(u,b)$ be a $B$-subsection. Then it is easy to see that $e(b)$ is a divisor of $e(B)$. By hypothesis $e(b)\in\{1,3,5,7,15,21\}$. 
As in the proof of \autoref{elem8}, we replace $b$ by $B$ for simplicity. In order to proof the Proposition, we determine the Cartan matrix $C$ of $B$ up to equivalence. If this is done, it will be immediately clear that a suitable inequality as in \autoref{majorast} is satisfied. 

The case $e(B)=1$ is clear. 
We consider the remaining cases separately.

\textbf{Case 1:} $e(B)=3$.\\
In this case we may use the method of Usami and Puig (see \cite{Usami23I,Usami23II,UsamiZ4}). Thus, we can replace $G$ by $D\rtimes C_3$ via a perfect isometry (observe that there are two essentially different actions of $C_3$ on $D$). Then $C$ has the form
\[\begin{pmatrix}8&4&4\\4&8&4\\4&4&8\end{pmatrix}\text{ or }\begin{pmatrix}6&5&5\\5&6&5\\5&5&6\end{pmatrix}\]
up to equivalence.

\textbf{Case 2:} $e(B)=5$.\\
Then there are four subsections $(1,B)$, $(u_1,b_1)$, $(u_2,b_2)$ and $(u_3,b_3)$ with $l(b_1)=l(b_2)=l(b_3)=1$. According to the fact that $|D|=16$ is a sum of $k(B)$ squares, we have six possibilities:
\begin{enumerate}[(i)]
\item $k(B)=k_0(B)=16$ and $l(B)=13$,\label{pos1}
\item $k(B)=k_0(B)=8$ and $l(B)=5$,\label{pos2}
\item $k(B)=13$, $k_0(B)=12$, $k_1(B)=1$ and $l(B)=10$,\label{pos3}
\item $k(B)=10$, $k_0(B)=8$, $k_1(B)=2$ and $l(B)=7$,\label{pos4}
\item $k(B)=7$, $k_0(B)=4$, $k_1(B)=3$ and $l(B)=4$,\label{pos5}
\item $k(B)=5$, $k_0(B)=4$, $k_1(B)=1$ and $l(B)=2$.\label{pos6}
\end{enumerate}
(Brauer's height zero conjecture would contradict the last four possibilities.) In case~\eqref{pos1} we have
\[C=\left(\begin{array}{ccccccccccccc}4&3&3&3&1&1&1&1&1&1&-1&-1&-1\\
3&4&3&3&1&1&1&1&1&1&-1&-1&-1\\
3&3&4&3&1&1&1&1&1&1&-1&-1&-1\\
3&3&3&4&1&1&1&1&1&1&-1&-1&-1\\
1&1&1&1&2&1&1&.&.&.&.&.&.\\
1&1&1&1&1&2&1&.&.&.&.&.&.\\
1&1&1&1&1&1&2&.&.&.&.&.&.\\
1&1&1&1&.&.&.&2&1&1&.&.&.\\
1&1&1&1&.&.&.&1&2&1&.&.&.\\
1&1&1&1&.&.&.&1&1&2&.&.&.\\
-1&-1&-1&-1&.&.&.&.&.&.&2&1&1\\
-1&-1&-1&-1&.&.&.&.&.&.&1&2&1\\
-1&-1&-1&-1&.&.&.&.&.&.&1&1&2\end{array}\right)\]
up to equivalence. In particular $\det C=256$. However, this contradicts Corollary~1 in \cite{DetCartan}. Now we assume that case~\eqref{pos2} occurs. Then there are several ways to arrange the generalized decomposition numbers corresponding to $b_i$ for $i=1,2,3$:
\[\begin{pmatrix}1&-1&-1\\1&-1&-1\\1&-1&-1\\1&-1&-1\\1&-1&-1\\1&-1&3\\1&3&-1\\3&1&1\end{pmatrix},\begin{pmatrix}1&-1&1\\1&-1&1\\1&-1&-1\\1&-1&-1\\1&-1&-1\\1&-1&3\\1&3&1\\3&1&-1\end{pmatrix},\begin{pmatrix}1&1&1\\1&1&1\\1&1&1\\1&-1&-1\\1&-1&-1\\1&-1&3\\1&3&-1\\3&-1&-1\end{pmatrix}.\]
In the last two cases the determinant of $C$ would be $64$. Thus, only the first case can occur. Then we have
\[C=\begin{pmatrix}4&3&3&3&3\\
3&4&3&3&3\\
3&3&4&3&3\\
3&3&3&4&3\\
3&3&3&3&4\end{pmatrix}\]
up to equivalence. Hence, we can consider the case~\eqref{pos3}. Then the generalized decomposition numbers corresponding to $b_i$ for $i=1,2,3$ can be arranged in the form
\[\left(\begin{array}{ccccccccccccc}1&1&1&1&1&1&1&1&1&1&1&1&2\\1&1&1&1&-1&-1&-1&-1&-1&-1&-1&-1&2\\-1&-1&-1&-1&-1&-1&-1&-1&1&1&1&1&2\end{array}\right)^{\text{T}}.\]
However, in this case $C$ would have determinat $256$. In the same manner we see that also the case~\eqref{pos4} is not possible. Thus, assume case~\eqref{pos5}. Then the generalized decomposition numbers corresponding to $b_i$ for $i=1,2,3$ have the form
\[\begin{pmatrix}1&1&1&1&2&2&2\\-1&-1&-1&-1&2&2&-2\\1&1&1&1&2&-2&-2\end{pmatrix}^{\text{T}}.\]
This gives
\[C=\begin{pmatrix}5&4&4&5\\
4&5&4&5\\
4&4&5&5\\
5&5&5&7\end{pmatrix},\]
and the claim follows. Finally let case~\eqref{pos6} occur. Then the generalized decomposition numbers corresponding to $b_i$ for $i=1,2,3$ have the form
\[\begin{pmatrix}1&1&1&3&2\\1&1&-3&-1&2\\1&-3&1&-1&2\end{pmatrix}^{\text{T}}.\]
It follows that
\[C=\begin{pmatrix}4&6\\6&13\end{pmatrix}.\]

\textbf{Case 3:} $e(B)=7$.\\
There are again four subsections $(1,B)$, $(u_1,b_1)$, $(u_2,b_2)$ and $(u_3,b_3)$. But in this case $l(b_1)=l(b_2)=1$ and $l(b_3)=7$ by the Kessar-Koshitani-Linckelmann paper. Moreover, $2$ appears six times as elementary divisor of the Cartan matrix of $b_3$. Using the theory of lower defect groups it follows that $2$ occurs at least six times as elementary divisor of $C$. By \autoref{defect4} we have $k(B)\le 16$. This gives $k(B)=k_0(B)=16$, $l(B)=7$. The generalized decomposition matrix (without the ordinary part) can be arranged in the form
\[\left(\begin{array}{cccccccccccccccc}
1&1&1&1&.&.&.&.&.&.&.&.&.&.&.&.\\
.&.&1&1&1&1&.&.&.&.&.&.&.&.&.&.\\
.&.&.&.&1&1&1&1&.&.&.&.&.&.&.&.\\
.&.&.&.&.&.&1&1&1&1&.&.&.&.&.&.\\
.&.&.&.&.&.&.&.&1&1&1&1&.&.&.&.\\
.&.&.&.&.&.&.&.&.&.&1&1&1&1&.&.\\
.&.&.&.&.&.&.&.&.&.&.&.&1&1&1&1\\
1&1&-1&-1&1&1&-1&-1&1&1&-1&-1&1&1&-1&-1\\
1&-1&1&-1&1&-1&1&-1&1&-1&1&-1&1&-1&1&-1
\end{array}\right)^{\text{T}}.\]
Hence, $C$ has the form
\[\begin{pmatrix}
4&2&2&2&2&2&2\\
2&4&2&2&2&2&2\\
2&2&4&2&2&2&2\\
2&2&2&4&2&2&2\\
2&2&2&2&4&2&2\\
2&2&2&2&2&4&2\\
2&2&2&2&2&2&4
\end{pmatrix}\]
up to equivalence (notice that this is also the Cartan matrix of $b_3$).

\textbf{Case 4:} $e(B)=15$.\\
There are just two subsections $(1,B)$ and $(u,b)$ with $l(b)=1$. It is easy to prove the claim using a similar case decision as in Case~2. We skip the details.

\textbf{Case 5:} $e(B)=21$.\\
There are four subsections $(1,B)$, $(u_1,b_1)$, $(u_2,b_2)$ and $(u_3,b_3)$. We have $l(b_1)=l(b_2)=3$ and $l(b_3)=5$ by the Kessar-Koshitani-Linckelmann paper. With the notations of \cite{KuelshammerOkuyama} $B$ is a centrally controlled block. In particular $l(B)\ge l(b_3)=5$ (see Theorem~1.1 in \cite{KuelshammerOkuyama}). Since the $k(B)$-conjecture holds for $B$, we have $k(B)=16$ and $l(B)=5$. The Cartan matrix of $b_3$ is given in the proof of \autoref{elem8}. Using this it is easy to deduce that the generalized decomposition numbers corresponding to $(u_3,b_3)$ can be arranged in the form
\[\left(\begin{array}{cccccccccccccccc}
1&1&1&1&.&.&.&.&.&.&.&.&.&.&.&.\\
.&.&.&.&1&1&1&1&.&.&.&.&.&.&.&.\\
.&.&.&.&.&.&.&.&1&1&1&1&.&.&.&.\\
.&.&.&.&.&.&.&.&.&.&.&.&1&1&1&1\\
.&.&1&1&.&.&1&1&.&.&1&1&.&.&1&1 
\end{array}\right)^{\text{T}}.\]
It is also easy to see that the columns of generalized decomposition numbers corresponding to $b_1$ and $b_2$ consist of eight entries $\pm1$ and eight entries $0$. The theory of lower defect groups shows that $2$ occurs as elementary divisor of $C$. Now we use GAP to enumerate all possible arrangements of these columns. It turns out that $C$ is equivalent to the Cartan matrix of $b_3$.
\end{proof}

\begin{Proposition}\label{C4C2C2}
Brauer's $k(B)$-conjecture holds for defect groups which are central extensions of $C_4\times C_2^2$ by a cyclic group.
\end{Proposition}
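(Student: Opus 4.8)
The plan follows the same template established in \autoref{centext} and \autoref{elem16}: reduce to a major subsection whose block has the prescribed quotient as defect group, determine the Cartan matrix of that block up to equivalence, and then exhibit a positive definite integral quadratic form $q$ for which \autoref{majorast} delivers the bound $k(B)\le|D|$. Concretely, let $D$ be the defect group of $B$, choose a central cyclic subgroup $\langle u\rangle\le\Z(D)$ with $D/\langle u\rangle\cong C_4\times C_2^2$, and pass to a $B$-subsection $(u,b)$. Since $b$ dominates a block $\overline b$ of $\C_G(u)/\langle u\rangle$ with defect group $C_4\times C_2^2$, and the Cartan matrix of $b$ is $|\langle u\rangle|$ times that of $\overline b$, the problem reduces to controlling the Cartan matrix $C$ of a $2$-block with defect group $C_4\times C_2^2$. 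As before, it then suffices to produce $q$ with $\sum q_{ij}c_{ij}\le|D|=16$.

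First I would pin down the possible inertial indices. The automorphism group of $C_4\times C_2^2$ has order dividing $2^?\cdot 3\cdot 7$; the odd part of $e(B)$ can only be a divisor of the relevant structure, so I expect $e(B)\in\{1,3,7,21\}$ after discarding the even contributions (an inertial index coprime to $p=2$ is forced). The case $e(B)=1$ gives $l(B)=1$ and is immediate. For $e(B)=3$ I would invoke the method of Usami and Puig to replace $G$ by a semidirect product $D\rtimes C_3$ via a perfect isometry, reading off the finitely many equivalence classes of $C$ exactly as in Case~1 of \autoref{elem16}. For the larger inertial indices I would enumerate the $B$-subsections $(u_i,b_i)$, use the Kessar-Koshitani-Linckelmann results to fix the numbers $l(b_i)$, and then exploit the orthogonality relations together with the known Cartan matrices of the subsection blocks (each a block with smaller defect group whose invariants are already available) to arrange the generalized decomposition numbers. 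The theory of lower defect groups supplies the multiplicity of the elementary divisor $2$ in $C$, which sharply cuts down the admissible values of $l(B)$.

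Once each $C$ is determined up to equivalence, I would in each case write down a positive definite half-integral matrix $\frac12 A$ whose associated form $q$ satisfies $\sum q_{ij}c_{ij}\le 16$, feeding it into \autoref{majorast}. For the matrices that are not already diagonal-dominant enough for the crude bound \eqref{KW}, the off-diagonal entries of $q$ must be chosen to absorb the large diagonal Cartan invariant, precisely the trick used repeatedly in the proofs of \autoref{elem8} and \autoref{elem16}. I anticipate that the main obstacle is the combinatorial explosion in the higher inertial-index cases ($e(B)=21$, and possibly a stubborn configuration with $e(B)=7$): here the arrangement of generalized decomposition numbers is far from unique, and ruling out spurious candidates — typically by computing $\det C$ and comparing with $|D|$ or with the known elementary divisors — may require a GAP-assisted enumeration, as was already necessary in Case~5 of \autoref{elem16}. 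Subject to clearing that bookkeeping, every resulting $C$ admits a suitable $q$, and the $k(B)$-conjecture follows.
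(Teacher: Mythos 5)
Your overall template is the right one, and your handling of the only case that actually occurs coincides with the paper's proof: reduce via a major subsection $(u,b)$ to a block with defect group $C_4\times C_2^2$, assume $e(B)=3$, invoke Usami--Puig to pass to the principal block of $D\rtimes C_3$, and feed the resulting Cartan matrix into \autoref{majorast}. The genuine gap lies in your list of inertial indices. You guess that $\Aut(C_4\times C_2^2)$ has order of the shape $2^?\cdot 3\cdot 7$ and accordingly open the cases $e(B)\in\{1,3,7,21\}$; in fact $\Aut(C_4\times C_2^2)$ has order $192=2^6\cdot 3$, so the inertial quotient, being a subgroup of odd order, satisfies $e(B)\in\{1,3\}$. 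The case list $\{1,3,7,21\}$ belongs to the defect group $C_2^3$ treated in \autoref{elem8}, where $\Aut(C_2^3)\cong\GL(3,2)$ has order $2^3\cdot 3\cdot 7$; you have imported it to the wrong group. This is not a harmless overcount: your proposal never closes the cases $e(B)=7$ and $e(B)=21$, but only sketches a subsection analysis (invoking Kessar--Koshitani--Linckelmann, whose results concern elementary abelian defect groups of order $8$ and do not apply here) and explicitly defers the conclusion to unspecified bookkeeping. Since you have not shown these cases to be vacuous, the proof as written is incomplete exactly where it hedges.

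The missing observation, $|\Aut(C_4\times C_2^2)|=2^6\cdot 3$, is precisely what makes the paper's argument short. With $e(B)=3$ one may take $B$ to be the principal block of $D\rtimes C_3\cong C_4\times A_4$, whose Cartan matrix is
\[4\begin{pmatrix}2&1&1\\1&2&1\\1&1&2\end{pmatrix}=\begin{pmatrix}8&4&4\\4&8&4\\4&4&8\end{pmatrix}\]
up to equivalence. Then no specially tailored quadratic form is needed: the standard form underlying inequality \eqref{KW}, applied through \autoref{majorast} to the subsection $(u,b)$, gives $k(B)\le|\langle u\rangle|\bigl((8+8+8)-(4+4)\bigr)=16\,|\langle u\rangle|=|D|$, which is the assertion.
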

\begin{proof}
Let $B$ be a block with defect group $D\cong C_4\times C_2^2$. We may assume $e(B)=3$. Then we can use the method of Usami and Puig (see \cite{Usami23I,Usami23II,UsamiZ4}). This means it suffices to consider the case $G=D\rtimes C_3$ and $B=B_0(G)$. An easy calculation shows that the Cartan matrix of $B$ is equivalent to
\[\begin{pmatrix}8&4&4\\4&8&4\\4&4&8\end{pmatrix}.\]
Hence, the result follows from inequality~\eqref{KW} as before.
\end{proof}

Now we turn to the nonabelian (and nonmetacyclic) groups of order $16$.

\begin{Proposition}\label{d8xc2}
Let $B$ be a nonnilpotent block with defect group $D_8\times C_2$. Then $k(B)=10$, $k_0(B)=8$ and $k_1(B)=2$. The ordinary irreducible characters are $2$-rational. Moreover, $l(B)\in\{2,3\}$ and the Cartan matrix of $B$ is equivalent to 
\[\begin{pmatrix}6&2\\2&6\end{pmatrix}\text{ or }\begin{pmatrix}6&2&2\\2&4&0\\2&0&4\end{pmatrix}.\]
In particular the $k(B)$-conjecture holds for defect groups which are central extensions of $D_8\times C_2$ by a cyclic group.
\end{Proposition}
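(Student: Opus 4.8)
The plan is to analyze a nonnilpotent block $B$ with defect group $D \cong D_8 \times C_2$ by reducing, as in the proofs of \autoref{elem8} and \autoref{elem16}, to a study of subsections and their generalized decomposition numbers, then to pin down the Cartan matrix up to equivalence. First I would set up the subsection structure: since $D$ is nonabelian of order $16$, I examine the possible fusion systems on $D$ and the associated inertial quotients. The nonnilpotency forces a nontrivial automorphism action; because $D_8$ admits an order-$3$ automorphism only after passing to $Q_8$ (and $D_8$ itself has inertial index dividing $2$), the relevant essential subgroups and the action on $D/\Phi(D)$ must be identified carefully. I expect that the only nontrivial fusion comes from a Klein-four quotient carrying an $S_3$-type action, which governs the local structure.

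Next I would determine the numerical invariants. The key is to locate a major subsection $(u,b)$ with $u$ a suitable central involution and to understand the dominated block $\overline{b}$ of $\C_G(u)/\langle u\rangle$, whose defect group is the image of $D$. Since $D_8 \times C_2$ has center $C_2 \times C_2$, different choices of $u$ give quotients of order $8$ that are either $D_8$ or $C_2^3$ or $C_4 \times C_2$; I would use the already-known invariants for blocks with these defect groups (dihedral of order $8$ via Brauer/Erdmann, and the $C_2^3$ case from \autoref{elem8}) to read off the Cartan matrix of $\overline{b}$, hence of $b$ after multiplying by $|\langle u\rangle|=2$. Combined with the fact that $k(B)-l(B)$ and the $k_i(B)$ are determined locally, this should yield $k(B)=10$, $k_0(B)=8$, $k_1(B)=2$, and $l(B)\in\{2,3\}$, together with the $2$-rationality of the irreducible characters (the latter following because the relevant generalized decomposition numbers are integers, as $u$ is real).

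The heart of the argument is then the explicit computation of the Cartan matrix up to equivalence in each of the two cases $l(B)\in\{2,3\}$. Here I would mimic the technique used repeatedly in \autoref{elem8}: arrange the columns of generalized decomposition numbers associated with the nontrivial subsections into a standard shape using the orthogonality relations, solve for the admissible integer (or cyclotomic-integer) patterns, and thereby reconstruct the ordinary decomposition matrix $Q$ and hence $C=Q^{\mathrm{T}}Q$ up to equivalence. I anticipate that the orthogonality relations, together with the constraint that $\det C$ divides $|D|=16$ appropriately and that the elementary divisors of $C$ lie in $\{1,2,|D|\}$, will cut the possibilities down to exactly the two displayed matrices $\bigl(\begin{smallmatrix}6&2\\2&6\end{smallmatrix}\bigr)$ and $\bigl(\begin{smallmatrix}6&2&2\\2&4&0\\2&0&4\end{smallmatrix}\bigr)$; a short GAP enumeration, in the spirit of the other proofs, would confirm that no other arrangement survives.

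The main obstacle will be the case $l(B)=3$, where the generalized decomposition numbers need not all be rational integers and one must control the cyclotomic contributions (as in the $k(B)=6$ subcase of Case~4 of \autoref{elem8}), ruling out spurious configurations that would give a wrong determinant. Once $C$ is known, the final assertion about central extensions follows exactly as in \autoref{centext} and \autoref{elem8}: for a defect group that is a central extension of $D_8\times C_2$ by a cyclic $\langle u\rangle$, the subsection block $b$ has Cartan matrix $|\langle u\rangle|C$, and one applies \autoref{majorast} with an explicit positive definite quadratic form. For the matrix $\bigl(\begin{smallmatrix}6&2\\2&6\end{smallmatrix}\bigr)$ inequality~\eqref{KW} already gives $k(B)\le 6+6-2=10\le|D|$, and for the rank-$3$ matrix I would exhibit a suitable form $q$ (for instance one built from a slightly perturbed $A_3$-type Gram matrix) making the bound in \autoref{majorast} at most $|D|$, which completes the proof.
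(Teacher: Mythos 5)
There is a genuine gap: the two steps that actually carry the proof are asserted rather than derived, and your sketch of the local structure contains errors. First, the subsection/fusion analysis. The self-centralizing subgroups of $D=D_8\times C_2$ that carry the nontrivial fusion are the two elementary abelian subgroups $M_1,M_2\cong C_2^3$ (together with $M_3\cong C_4\times C_2$, whose automorphism group is a $2$-group), not a ``Klein-four quotient''; and the central quotients $D/\langle u\rangle$ for $u\in\Z(D)\setminus\{1\}$ are only $C_2^3$ and $D_8$ --- never $C_4\times C_2$ as you claim. Since $\Aut(D)$ is a $2$-group, $e(B)=1$, so nonnilpotency forces $\N_G(M_1,b_{M_1})/\C_G(M_1)\cong S_3$, and the whole dichotomy in the statement ($l(B)=2$ with Cartan matrix $\bigl(\begin{smallmatrix}6&2\\2&6\end{smallmatrix}\bigr)$ versus $l(B)=3$ with the $3\times 3$ matrix) corresponds exactly to whether the action on $M_2$ is trivial or again of type $S_3$: in the first case one gets subsections $(xy,b_{xy})$, $(xyz,b_{xyz})$ and $l(b_z)+l(b_{x^2z})=3$, hence $k(B)=l(B)+8$, while in the second these subsections disappear, $l(b_z)+l(b_{x^2z})=4$, and $k(B)=l(B)+7$. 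Your proposal has no mechanism producing this case split, so it cannot arrive at precisely the two listed Cartan matrices.

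Second, the invariants $k(B)=10$, $k_0(B)=8$, $k_1(B)=2$ do not follow from ``$k(B)-l(B)$ and the $k_i(B)$ are determined locally''; heights are not locally determined in any generic sense, and the local count alone only gives $k(B)=l(B)+8$ (resp.\ $l(B)+7$). The actual derivation is: (a) bound $k(B)\le 10$ by applying \autoref{kulswada} (through \autoref{majorast}) to the Cartan matrix of the subsection block $b_z$, which dominates a block with defect group $D_8$ and is known from Brauer/Erdmann --- you name these ingredients but never extract the bound; (b) exclude $k(B)=9$ because the column of generalized decomposition numbers of the major subsection $(x^2,b_{x^2})$, with $l(b_{x^2})=1$, has square sum $16$, and $16$ is not a sum of $9$ positive squares; (c) read off $k_0(B)=8$ and $k_1(B)=2$ from the only admissible shape of that column (eight entries $\pm1$ and two entries $\pm2$). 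One point in your favor: the obstacle you anticipate in the $l(B)=3$ case does not exist, since every element of order $4$ in $D$ is conjugate to its inverse by $y$, so all generalized decomposition numbers are rational integers in both cases --- which is exactly how the $2$-rationality claim is proved. Your concluding reduction for central extensions (Cartan matrix $|\langle u\rangle|C$ plus \autoref{majorast} and inequality~\eqref{KW}) is correct and matches the paper.
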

\begin{proof}
First we remark that the proof and the result is very similar to the case where the defect group is $D_8$ (see \cite{Brauer}).
Let $D:=\langle x,y,z\mid x^4=y^2=z^2=[x,z]=[y,z]=1,\ yxy=x^{-1}\rangle\cong D_8\times C_2$ and let $(D,b_D)$ a Sylow subpair.
It is easy to see that $\Aut(D)$ is a $2$-group. Thus, $e(B)=1$. We use the theory developed in \cite{Olssonsubpairs}. One can show, that all selfcentralizing proper subgroups of $D$ are maximal and there are precisely three of them:
\begin{align*}
M_1&:=\langle x^2,y,z\rangle\cong C_2^3,\\
M_2&:=\langle x^2,xy,z\rangle\cong C_2^3,\\
M_3&:=\langle x,z\rangle\cong C_4\times C_2.
\end{align*}
Now Lemma~1.7 in \cite{Olsson} yields $\A_0(D,b_D)=\{M_1,M_2,M_3,D\}$. Assume that $M_1$ and $M_2$ are conjugate in $G$. Then also the $B$-subpairs $(M_1,b_{M_1})$ and $(M_2,b_{M_2})$ are conjugate. By Alperin's fusion theorem they are already conjugate in $\N_G(D,b_D)$. Since $e(B)=1$, this is not possible. 

Now we determine a system of representatives for the conjugacy classes of $B$-subsections using (6C) in \cite{Brauerstruc}. As usual, one gets four major subsections $(1,B)$, $(x^2,b_{x^2})$, $(z,b_z)$, $(x^2z,b_{x^2z})$. Then $b_{x^2}$ dominates a block with defect group $D/\langle x^2\rangle\cong C_2^3$. Since $e(B)=1$, we get $l(b_{x^2})=1$. On the other hand, $b_z$ and $b_{x^2z}$ dominate blocks with defect group $D_8$. 

Since $\Aut(M_3)$ is a $2$-group, we have $\N_G(M_3,b_{M_3})=D\C_G(M_3)$. This gives two subsections $(x,b_x)$ and $(xz,b_{xy})$. Again we have $l(b_x)=l(b_{xz})=1$.

If $\N_G(M_1,b_{M_1})=D\C_G(M_1)$ and $\N_G(M_2,b_{M_2})=D\C_G(M_2)$, then $B$ would be nilpotent. Thus, we may assume $\N_G(M_1,b_{M_1})/\C_G(M_1)\cong S_3$. Then the elements $\{y,x^2y,yz,x^2yz\}$ are conjugate to elements of $\Z(D)$ under $\N_G(M_1,b_{M_1})$. Hence, there are no subsections corresponding to the subpair $(M_1,b_{M_1})$ (cf. Lemma~2.10 in \cite{OlssonRedei}). We distinguish two cases.

\textbf{Case 1:} $\N_G(M_2,b_{M_2})=D\C_G(M_2)$.\\
Then the action of $\N_G(M_2,b_{M_2})$ gives the subsections $(xy,b_{xy})$ and $(xyz,b_{xyz})$. Moreover, $l(b_{xy})=l(b_{xyz})=1$ holds. Since $\N_G(M_1,b_{M_1})$ fixes exactly one element of $\{z,x^2z\}$, we get $l(b_z)+l(b_{x^2z})=3$ (see Theorem~2 in \cite{Brauer}) 
Collecting all the subsections, we deduce $k(B)=l(B)+8$. We may assume that $l(b_z)=2$ (otherwise replace $b_z$ with $b_{x^2z}$). Then the Cartan matrix of $b_z$ is equivalent to $\bigl(\begin{smallmatrix}6&2\\2&6\end{smallmatrix}\bigr)$ (see page~294/5 in \cite{Erdmann}). This gives $k(B)\le 10$. Since $16$ is not the sum of $9$ positive squares, we must have $k(B)=10$. Then $k_0(B)=8$, $k_1(B)=2$ and $l(B)=2$.
In order to determine the Cartan matrix, we investigate the generalized decomposition numbers $d^u_{\chi\phi}$ first. For $u\in D$ with $l(b_u)=1$ we write $\IBr(b_u)=\{\phi_u\}$. Then the numbers $\{d^{x^2}_{\chi\phi_{x^2}}:\chi\in\Irr(B)\}$ can be arranged in the form
\[(1,1,1,1,1,1,1,1,2,2)^{\text{T}},\]
where the last two characters have height $1$. 
It is easy to see that the subsections $(x,b_x)$ and $(x^{-1},b_x)$ are conjugate by $y$. This shows that the numbers $d^x_{\chi\phi_x}$ are integral. The same holds for $d^{xz}_{\chi\phi_{xz}}$. Hence, all irreducible characters are $2$-rational. For every character $\chi$ of height $0$ we have $d^x_{\chi\phi_x}\ne 0\ne d^{xz}_{\chi\phi_{xz}}$. Hence, we get three columns of the generalized decomposition matrix:
\[\begin{pmatrix}
1&1&1&1&1&1&1&1&2&2\\
1&1&1&1&-1&-1&-1&-1&.&.\\
1&1&-1&-1&1&1&-1&-1&.&.
\end{pmatrix}^{\text{T}}.\]
Adding the columns $\{d^{xy}_{\chi\phi_{xy}}:\chi\in\Irr(B)\}$ and $\{d^{xyz}_{\chi\phi_{xyz}}:\chi\in\Irr(B)\}$ gives:
\[\begin{pmatrix}
1&1&1&1&1&1&1&1&2&2\\
1&1&1&1&-1&-1&-1&-1&.&.\\
1&1&-1&-1&1&1&-1&-1&.&.\\
1&-1&1&-1&1&-1&1&-1&.&.\\
1&-1&1&-1&-1&1&-1&1&.&.
\end{pmatrix}^{\text{T}}\]
(To achieve this form, one may has to interchange the third row with fifth and the fourth with the sixth as well as the second column with the third.)
Since $(x^2z,b_{x^2z})$ is a major subsection, the column $\{d^{x^2z}_{\chi\phi_{x^2z}}:\chi\in\Irr(B)\}$ consists of eight entries $\pm1$ and two entries $\pm2$. However, there are three essentially different ways to add this column to the previous ones:
\[\begin{pmatrix}
1&1&1&1&1&1&1&1&2&2\\
1&1&1&1&-1&-1&-1&-1&.&.\\
1&1&-1&-1&1&1&-1&-1&.&.\\
1&-1&1&-1&1&-1&1&-1&.&.\\
1&-1&1&-1&-1&1&-1&1&.&.\\
1&1&1&1&1&1&1&1&-2&-2
\end{pmatrix}^{\text{T}}\]
or
\[\begin{pmatrix}
1&1&1&1&1&1&1&1&2&2\\
1&1&1&1&-1&-1&-1&-1&.&.\\
1&1&-1&-1&1&1&-1&-1&.&.\\
1&-1&1&-1&1&-1&1&-1&.&.\\
1&-1&1&-1&-1&1&-1&1&.&.\\
1&-1&-1&1&1&-1&-1&1&2&-2
\end{pmatrix}^{\text{T}}\]
or
\[\begin{pmatrix}
1&1&1&1&1&1&1&1&2&2\\
1&1&1&1&-1&-1&-1&-1&.&.\\
1&1&-1&-1&1&1&-1&-1&.&.\\
1&-1&1&-1&1&-1&1&-1&.&.\\
1&-1&1&-1&-1&1&-1&1&.&.\\
1&-1&-1&1&-1&1&1&-1&2&-2
\end{pmatrix}^{\text{T}}\]

We use a GAP to enumerate the remaining columns corresponding to the subsection $(z,b_z)$. In all cases the Cartan matrix of $B$ is equivalent to 
\[\begin{pmatrix}6&2\\2&6\end{pmatrix}.\]

\textbf{Case 2:} $\N_G(M_2,b_{M_2})/\C_G(M_2)\cong S_3$.\\
Then one can see by the same argument as for $(M_1,b_{M_1})$ that there are no subsections corresponding to the subpair $(M_2,b_{M_2})$. Since $\N_G(M_1,b_{M_1})$ and $\N_G(M_2,b_{M_2})$ fix exactly one element of $\{z,x^2z\}$ (not necessarily the same), we have $l(b_z)+l(b_{x^2z})=4$ (the cases $l(b_z)=l(b_{x^2z})=2$, $l(b_z)=3$, $l(b_{x^2z})=1$ and $l(b_z)=1$, $l(b_{x^2z})=3$ are possible). We deduce $k(B)=l(B)+7$. If $l(b_z)=2$, then we get $k(B)\le 10$ as in Case~1. Assume $l(b_z)=3$. Then the Cartan matrix of $b_z$ is equivalent to
\[2\begin{pmatrix}2&1&0\\1&3&1\\0&1&2\end{pmatrix}.\]
Thus, also in this case we have $k(B)\le 10$. A consideration of the lower defect groups shows that $2$ occurs as elementary divisor of the Cartan matrix $C$ of $B$. In particular $l(B)\ge 2$ and $k(B)\ge 9$.
Since $16$ is not the sum of $9$ positive squares, it follows that $k(B)=10$, $k_0(B)=8$, $k_1(B)=2$ and $l(B)=3$. 
An investigation of the generalized decomposition numbers similar as in the first case reveals that $C$ is equivalent to
\[\begin{pmatrix}4&2&0\\2&6&2\\0&2&4\end{pmatrix}.\]
This proves the proposition.
\end{proof}

It is easy to see that both cases ($l(B)\in\{2,3\}$) in \autoref{d8xc2} occur for the principal blocks of $S_4\times C_2$ and $\GL(3,2)\times C_2$ respectively.

\begin{Proposition}\label{q8xc2}
Let $B$ be a nonnilpotent block with defect group $Q_8\times C_2$. Then $k(B)=14$, $k_0(B)=8$, $k_1(B)=6$ and $l(B)=3$. The ordinary irreducible characters are $2$-rational. The Cartan matrix of $B$ is equivalent to 
\[\begin{pmatrix}8&4&4\\4&8&4\\4&4&8\end{pmatrix}.\]
In particular the $k(B)$-conjecture holds for defect groups which are central extensions of $Q_8\times C_2$ by a cyclic group.
\end{Proposition}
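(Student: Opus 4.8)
The plan is to follow the pattern of \autoref{d8xc2}, exploiting that $D:=Q_8\times C_2$ has only the three central involutions $x^2,z,x^2z$, with $x^2$ the unique square. First I would fix the fusion. The only proper self-centralizing subgroups of $D$ are the three abelian maximal subgroups $\langle x,z\rangle,\langle y,z\rangle,\langle xy,z\rangle\cong C_4\times C_2$ (the copies of $Q_8$ are not self-centralizing, since $\C_D(Q_8)=\Z(D)\not\le Q_8$), and their automorphism groups are $2$-groups; hence no proper subgroup is essential, and by Alperin's fusion theorem the fusion of $B$ is controlled by $\N_G(D,b_D)$. On the Frattini quotient $D/\Phi(D)\cong C_2^3$ (with $\Phi(D)=\langle x^2\rangle$) every automorphism fixes the unique coset of involutions, the image of $z$, so the odd part of $\Aut_{\mathcal F}(D)=\N_G(D,b_D)/\C_G(D)$ has order dividing $3$. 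As $B$ is nonnilpotent this forces $e(B)=3$, the inertial quotient $C_3$ permuting $\langle x\rangle,\langle y\rangle,\langle xy\rangle$ cyclically and acting trivially on $\langle z\rangle$. Consequently $\Aut_{\mathcal F}(D)$ fixes $\Z(D)=\langle x^2,z\rangle$ pointwise, so $x^2,z,x^2z$ are pairwise nonfused.

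Knowing $e(B)=3$, I would invoke the method of Usami and Puig (used already for \autoref{C4C2C2} and in \autoref{elem16}) to obtain a perfect isometry between $B$ and the principal block of $D\rtimes C_3$. Since $C_3$ acts trivially on $\langle z\rangle$, this group is $(Q_8\rtimes C_3)\times C_2\cong\SL(2,3)\times C_2$, and a perfect isometry preserves $k$, the heights, $l$, the $2$-rationality of characters, and the Cartan matrix up to equivalence. It then suffices to read off the invariants of $B_0(\SL(2,3)\times C_2)$. The principal $2$-block of $\SL(2,3)$ contains all seven irreducible characters (degrees $1,1,1,2,2,2,3$), has $l=3$, $k_0=4$ (the odd degrees, height $0$) and $k_1=3$ (the degree-$2$ characters, height $1$), with values in $\mathbb{Q}(\zeta_3)$ and hence $2$-rational, and Cartan matrix equivalent to $\bigl(\begin{smallmatrix}4&2&2\\2&4&2\\2&2&4\end{smallmatrix}\bigr)$. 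Tensoring with the principal block of $C_2$ (where $k=2$, $l=1$ and the Cartan matrix is $(2)$) multiplies $k$ and $l$, adds heights and preserves $2$-rationality; this yields $k(B)=14$, $k_0(B)=8$, $k_1(B)=6$, $l(B)=3$ and Cartan matrix equivalent to $2\bigl(\begin{smallmatrix}4&2&2\\2&4&2\\2&2&4\end{smallmatrix}\bigr)=\bigl(\begin{smallmatrix}8&4&4\\4&8&4\\4&4&8\end{smallmatrix}\bigr)$, as asserted.

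As a cross-check I would also run the subsection count of \autoref{d8xc2}: the major subsections are $(1,B)$, $(x^2,b_{x^2})$, $(z,b_z)$, $(x^2z,b_{x^2z})$, where $b_{x^2}$ dominates a Klein four block and $b_z,b_{x^2z}$ dominate $Q_8$-blocks, each with inertial index $3$ and thus $l=3$; the two remaining classes of order-$4$ elements give $(x,b_x)$ and $(xz,b_{xz})$ with abelian defect $C_4\times C_2$ and trivial inertial quotient, so $l(b_x)=l(b_{xz})=1$. Brauer's formula then gives $k(B)=l(B)+3+3+3+1+1=l(B)+11$, consistent with $l(B)=3$ and $k(B)=14$. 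Should one wish to avoid Usami and Puig, this is the route to make the argument self-contained: the Cartan matrix would instead be extracted, exactly as in \autoref{d8xc2}, from the generalized decomposition numbers of the major subsections (integral by $2$-rationality) together with the orthogonality relations, enumerating the finitely many arrangements with GAP. I expect the Usami--Puig perfect isometry for $p=2$ to be the main external input on the first route (it is established in another part of the thesis), while on the second route the main obstacle is pinning down $l(B)=3$ and carrying out the explicit determination of $C$ from the generalized decomposition numbers.

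Finally, the concluding assertion follows from \autoref{majorast}. Let $\tilde D$ be a central extension of $Q_8\times C_2$ by a cyclic group and choose $u$ generating the cyclic kernel, so $\tilde D/\langle u\rangle\cong Q_8\times C_2$. If the block is nilpotent the $k(B)$-conjecture is clear; otherwise the dominated block of the major subsection $(u,b)$ is nonnilpotent with defect group $Q_8\times C_2$, so by the present proposition its Cartan matrix is equivalent to $\bigl(\begin{smallmatrix}8&4&4\\4&8&4\\4&4&8\end{smallmatrix}\bigr)$ and the Cartan matrix of $b$ is equivalent to $|\langle u\rangle|\bigl(\begin{smallmatrix}8&4&4\\4&8&4\\4&4&8\end{smallmatrix}\bigr)$. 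Applying \autoref{majorast} with the positive definite form underlying \eqref{KW} gives $k(B)\le|\langle u\rangle|(24-8)=16\,|\langle u\rangle|=|\tilde D|$, which is Brauer's $k(B)$-conjecture.
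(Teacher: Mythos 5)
Your first (and main) route breaks down at its key step: the Usami--Puig theorems you invoke (\cite{Usami23I,Usami23II,UsamiZ4}, as used for \autoref{C4C2C2} and in \autoref{elem16}) produce perfect isometries only for blocks with \emph{abelian} defect groups and small inertial quotient, whereas $D\cong Q_8\times C_2$ is nonabelian. No such result is available here, and producing a perfect isometry between $B$ and the principal block of $D\rtimes C_3\cong\SL(2,3)\times C_2$ is essentially equivalent to the content of the proposition --- which is exactly why the paper does not argue this way for $Q_8\times C_2$ (nor for $D_8\times C_2$ or $D_8\ast C_4$), but only for the abelian groups $C_{2^r}^2$, $C_4\times C_2^2$ and $C_2^4$. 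Your reading of the invariants of $B_0(\SL(2,3)\times C_2)$ is correct, so the target is right; but the isometry itself is unjustified (and, as a side issue, a perfect isometry does not obviously transport $2$-rationality). A smaller slip in the same paragraph: $b_{x^2}$ dominates a block with defect group $D/\langle x^2\rangle\cong C_2^3$, not a Klein four block; one still gets $l(b_{x^2})=3$, but via inertial index $3$ on the abelian group $C_2^3$ (where Usami--Puig, or the analysis in \autoref{elem8}, is legitimate).

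Your second route is the paper's actual proof, but you leave open precisely the steps that constitute it. The paper (i) obtains $l(B)\ge l(b_z)=3$, hence $k(B)\ge 14$, because $B$ is centrally controlled in the sense of \cite{KuelshammerOkuyama}; (ii) takes the Cartan matrix $\bigl(\begin{smallmatrix}8&4&4\\4&8&4\\4&4&8\end{smallmatrix}\bigr)$ of $b_z$ from \cite{Erdmann} and classifies the possible shapes of the corresponding part $Q$ of the generalized decomposition matrix, getting $k(B)\in\{14,16\}$ and $l(B)\in\{3,5\}$; (iii) eliminates $k(B)=16$ by a contribution argument: the diagonal contributions $16m^{(z,b_z)}_{\chi\chi}$ are all odd, so $k(B)=k_0(B)$ by (5G) of \cite{BrauerBlSec2}, whence $d^x_{\chi\phi_x}\ne 0$ for all $\chi\in\Irr(B)$ by Proposition~1 of \cite{BroueSanta}, contradicting $\sum_{\chi}|d^x_{\chi\phi_x}|^2=|M_1|=8$; and (iv) pins down $k_0(B)=8$, $k_1(B)=6$, the $2$-rationality (from the conjugacy of $(x,b_x)$ and $(x^{-1},b_x)$), and finally the Cartan matrix by a GAP enumeration of the possible decomposition matrices. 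None of these arguments appears in your proposal; you explicitly flag $l(B)=3$ and the determination of $C$ as open obstacles, so the gap is acknowledged but not closed. Your final paragraph, deducing the $k(B)$-conjecture for central extensions from \autoref{majorast} and inequality \eqref{KW}, is correct in substance, except that the case distinction should be whether the dominated block $\overline{b}$ with defect group $\tilde D/\langle u\rangle$ is nilpotent, not whether $B$ is.
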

\begin{proof}
Let $D:=\langle x,y,z\mid x^2=y^2,\ xyx^{-1}=y^{-1},\ z^2=[x,z]=[y,z]=1\rangle\cong Q_8\times C_2$ and let $(D,b_D)$ a Sylow subpair. Since $|\Z(D):\Phi(D)|=2$, we have $e(B)\in\{1,3\}$. As in the proof of \autoref{d8xc2} there are precisely three selfcentralizing proper subgroups of $D$:
\begin{align*}
M_1&:=\langle x,z\rangle\cong C_4\times C_2,\\
M_2&:=\langle y,z\rangle\cong C_4\times C_2,\\
M_3&:=\langle xy,z\rangle\cong C_4\times C_2.
\end{align*}
It follows from Lemma~1.7 in \cite{Olsson} that $\A_0(D,b_D)=\{M_1,M_2,M_3,D\}$. Since $\Aut(M_i)$ is a $2$-group for $i=1,2,3$, $B$ would be nilpotent if $e(B)=1$. Thus, we may assume that $e(B)=3$. Then $M_1$, $M_2$ and $M_3$ are conjugate in $G$. We describe a system of representatives for the conjugacy classes of $B$-subsections. As usual, there are four major subsections $(1,B)$, $(x^2,b_{x^2})$, $(z,b_z)$ and $(x^2z,b_{x^2z})$. Moreover, the subpair $(M,b_M)$ gives the subsections $(x,b_x)$ and $(xz,b_{xz})$. The blocks $b_z$ and $b_{x^2z}$ dominate blocks with defect group $D/\langle z\rangle\cong D/\langle x^2z\rangle\cong Q_8$. Since $\N_G(D,b_D)$ centralizes $\Z(D)$, these blocks with defect group $Q_8$ have inertial index $3$. Now Theorem~3.17 in \cite{Olsson} gives $l(b_z)=l(b_{x^2z})=3$. The block $b_{x^2}$ covers a block with defect group $D/\langle x^2\rangle\cong C_2^3$ and inertial index $3$. Thus, we also have $l(b_{x^2})=3$. Finally the blocks $b_x$ and $b_{xz}$ have defect group $M_1$. Hence, they are nilpotent, and we have $l(b_x)=l(b_{xz})=1$. This yields $k(B)=11+l(B)$. Since $B$ is a centrally controlled block, we get $l(B)\ge l(b_z)=3$ and $k(B)\ge 14$. The Cartan matrix of $b_{x^2}$, $b_{x^2z}$ and $b_{z}$ is equivalent to
\[\begin{pmatrix}8&4&4\\4&8&4\\4&4&8\end{pmatrix}\]
(see page~305 in \cite{Erdmann}). Let $Q\in\mathbb{Z}^{k(B)\times 3}$ be the part of the generalized decomposition matrix corresponding to $b_z$. Then the columns of $Q$ have one of the following forms: $(\pm2,\pm2,0,\ldots,0)$, $(\pm2,\pm1,\pm1,\pm1,\pm1,0,\ldots,0)$ or $(\pm1,\ldots,\pm1,0,\ldots,0)$. Since $k(B)\ge 14$, at least one column has the last form. A similar argument shows that no column has the first form. It follows that at least two columns have the form $(\pm1,\ldots,\pm1,0,\ldots,0)$. Hence, there are four possibilities for $Q$:
\[\begin{array}{cccc}
\left(\begin{array}{ccc}1&.&.\\1&.&.\\1&.&.\\1&.&.\\1&1&2\\1&1&1\\1&1&1\\1&1&.\\.&1&.\\.&1&.\\.&1&.\\.&1&.\\.&.&1\\.&.&1\end{array}\right)& \left(\begin{array}{ccc}1&.&.\\1&.&.\\1&.&.\\1&.&.\\1&1&1\\1&1&1\\1&1&1\\1&1&1\\.&1&1\\.&1&-1\\.&1&.\\.&1&.\\.&.&1\\.&.&1\end{array}\right)& 
\left(\begin{array}{ccc}1&.&.\\1&.&.\\1&.&1\\1&.&1\\1&1&1\\1&1&1\\1&1&.\\1&1&.\\.&1&1\\.&1&1\\.&1&.\\.&1&.\\.&.&1\\.&.&1\end{array}\right)& 
\left(\begin{array}{ccc}1&.&.\\1&.&.\\1&.&.\\1&.&.\\1&1&1\\1&1&1\\1&1&1\\1&1&1\\.&1&.\\.&1&.\\.&1&.\\.&1&.\\.&.&1\\.&.&1\\.&.&1\\.&.&1\end{array}\right)\\
(a)&(b)&(c)&(d)\end{array}\]
In particular $k(B)\in\{14,16\}$ and $l(B)\in\{3,5\}$. 

By way of contradiction, we assume $k(B)=16$. Then $Q$ is given as in case $(d)$. Let $M_z=(m_{\chi\psi}^{(z,b_z)})$ be the matrix of contributions corresponding to $(z,b_z)$. We denote the three irreducible Brauer characters of $b_z$ by $\phi_1,\phi_2$ and $\phi_3$. Then for $\chi\in\Irr(B)$ we have
\begin{align*}16m_{\chi\chi}^{(z,b_z)}&=3\bigl((d_{\chi\phi_1}^{z})^2+(d_{\chi\phi_2}^z)^2+(d_{\chi\phi_3}^z)^2\bigr)-2d_{\chi\phi_1}^zd_{\chi\phi_2}^z-
2d_{\chi\phi_1}^zd_{\chi\phi_3}^z-2d_{\chi\phi_2}^zd_{\chi\phi_3}^z\\
&\equiv d_{\chi\phi_1}^{z}+d_{\chi\phi_2}^z+d_{\chi\phi_3}^z\pmod{2}.\end{align*}
In particular the numbers $16m_{\chi\chi}^{(z,b_z)}$ are odd for all $\chi\in\Irr(B)$. Now (5G) in \cite{BrauerBlSec2} implies $k(B)=k_0(B)$. By Proposition~1 in \cite{BroueSanta} we get $d^x_{\chi\phi_x}\ne 0$ for all $\chi\in\Irr(B)$. However, $\sum_{\chi\in\Irr(B)}{|d_{\chi\phi_x}^x|^2}=|M_1|=8$. 

This contradiction yields $k(B)=14$ and $l(B)=3$. The last argument gives also $k_0(B)\le 8$. Now a similar analysis of the contributions reveals that $Q$ has the form $(c)$ (see above)
and $k_0(B)=8$. Again (5G) in \cite{BrauerBlSec2} implies $k_1(B)=6$ (this follows also from Corollary~1.4 in \cite{Landrock2}). Since the subsections $(x,b_x)$ and $(x^{-1},b_x)$ are conjugate in $G$, the generalized decomposition numbers $d^x_{\chi\phi_x}$ and $d^{xz}_{\chi\phi_{xz}}$ are integral. Thus, they must consist of eight entries $\pm1$ (for the characters of height $0$) and six entries $0$. In particular all characters are $2$-rational. Now we enumerate all possible decomposition matrices with GAP. In all cases the Cartan matrix of $B$ has the stated form.
\end{proof}

The principal block of $\SL(2,3)\times C_2$ gives an example for the last proposition.

\begin{Proposition}
Let $B$ be a nonnilpotent block with defect group $D_8\ast C_4$ (central product). Then $k(B)=14$, $k_0(B)=8$, $k_1(B)=6$ and $l(B)=3$. Moreover, the irreducible characters of height $0$ are $2$-rational and the characters of height $1$ consist of three pairs of $2$-conjugate characters. The Cartan matrix of $B$ is equivalent to 
\[\begin{pmatrix}8&4&4\\4&8&4\\4&4&8\end{pmatrix}.\]
In particular the $k(B)$-conjecture holds for defect groups which are central extensions of $D_8\ast C_4$ by a cyclic group.
\end{Proposition}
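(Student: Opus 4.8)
The plan is to run the argument of \autoref{d8xc2} and \autoref{q8xc2} almost verbatim, the one genuinely new feature being that the centre of $D:=D_8\ast C_4$ is cyclic of order $4$, which is what ultimately produces the $2$-conjugate height~$1$ characters. First I would fix the presentation $D=\langle x,y,w\mid x^4=y^2=1,\ yxy^{-1}=x^{-1},\ w^2=x^2,\ [w,x]=[w,y]=1\rangle$, so that $\Z(D)=\langle w\rangle\cong C_4$, $D'=\Phi(D)=\langle x^2\rangle=\langle w^2\rangle\cong C_2$ and $D/\Z(D)\cong C_2^2$. Since $|\Z(D):\Phi(D)|=2$ one gets $e(B)\in\{1,3\}$ exactly as for $Q_8\times C_2$, and nonnilpotency forces $e(B)=3$. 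The inertial quotient $E\cong C_3$ must act trivially on $\Z(D)\cong C_4$ (because $\Aut(C_4)$ has no element of order $3$), hence $w$ and $w^{-1}$ are \emph{not} $G$-conjugate. On the other hand $E$ permutes the three non-central cosets of $\Z(D)$ cyclically, and a short computation shows that each such coset contains one $D$-class of involutions and one $D$-class of elements of order $4$; consequently the twelve non-central elements fuse into exactly one $G$-class of involutions (e.g.\ $xw$) and one $G$-class of elements of order $4$ (e.g.\ $x$), both with centraliser $\cong C_4\times C_2$.

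Next I would list the subsections up to conjugacy, via the subpair analysis of \cite{Olsson} as in \autoref{q8xc2}. Besides $(1,B)$ the major subsections from $\Z(D)$ are $(w^2,b_{w^2})$, where $b_{w^2}$ dominates a block with defect group $D/\langle w^2\rangle\cong C_2^3$ and inertial index $3$ (so $l(b_{w^2})=3$, and doubling the Cartan matrix $\bigl(\begin{smallmatrix}4&2&2\\2&4&2\\2&2&4\end{smallmatrix}\bigr)$ of that dominated block, cf.\ \autoref{elem8}, gives Cartan matrix $\bigl(\begin{smallmatrix}8&4&4\\4&8&4\\4&4&8\end{smallmatrix}\bigr)$ for $b_{w^2}$), together with the two \emph{distinct} subsections $(w,b_w)$ and $(w^{-1},b_{w^{-1}})$, each dominating a Klein four block with $e=3$, so that $l(b_w)=l(b_{w^{-1}})=3$ and, by the domination formula in the proof of \autoref{centext}, both have Cartan matrix $4\bigl(\begin{smallmatrix}2&1&1\\1&2&1\\1&1&2\end{smallmatrix}\bigr)=\bigl(\begin{smallmatrix}8&4&4\\4&8&4\\4&4&8\end{smallmatrix}\bigr)$. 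The non-central subsections $(x,b_x)$ and $(xw,b_{xw})$ have defect group $C_4\times C_2$, and since $\Aut(C_4\times C_2)$ is a $2$-group these blocks are nilpotent with $l(b_x)=l(b_{xw})=1$. Summing Brauer characters over all subsections yields $k(B)=l(B)+11$, and since $B$ is centrally controlled, Theorem~1.1 in \cite{KuelshammerOkuyama} gives $l(B)\ge l(b_{w^2})=3$, so $k(B)\ge 14$.

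For the reverse bound I would exploit the major subsection $(w^2,b_{w^2})$, whose generalized decomposition numbers are rational integers (as $w^2$ has order $2$) and whose Cartan matrix is the nice matrix above. Enumerating the admissible columns of this $k(B)\times 3$ block with GAP, exactly as in \autoref{q8xc2}, should leave only $k(B)\in\{14,16\}$; the value $16$ is then excluded by the contribution argument: with $C^{-1}=\frac1{16}\bigl(\begin{smallmatrix}3&-1&-1\\-1&3&-1\\-1&-1&3\end{smallmatrix}\bigr)$ one finds $16\,m^{(w^2,b_{w^2})}_{\chi\chi}\equiv\sum_i d^{w^2}_{\chi i}\pmod 2$ is odd for every $\chi$, so $k(B)=k_0(B)$ by (5G) in \cite{BrauerBlSec2}, whence Proposition~1 in \cite{BroueSanta} forces $d^{x}_{\chi\phi_x}\ne0$ for all $\chi$, contradicting $\sum_\chi|d^{x}_{\chi\phi_x}|^2=|\C_D(x)|=8<16$. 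This gives $k(B)=14$, $l(B)=3$, and then $k_0(B)=8$, $k_1(B)=6$ from (5G). Since the subsections at $x$, $xw$ and $w^2$ are fixed by inversion, the numbers $d^{x}_{\chi\phi_x}$, $d^{xw}_{\chi\phi_{xw}}$ and $d^{w^2}_{\chi\phi}$ are integral, so the height~$0$ characters (those with $d^{w}_{\chi\phi}=0$) are $2$-rational; because $w\not\sim w^{-1}$, the subsection $(w,b_w)$ contributes genuinely non-real values in $\mathbb{Q}(i)$, pairing the six height~$1$ characters into three $2$-conjugate pairs.

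Finally the Cartan matrix of $B$ itself is read off by reconstructing the ordinary decomposition matrix from the generalized decomposition numbers and enumerating the possibilities with GAP, again mirroring \autoref{q8xc2}; in every surviving case it comes out equivalent to $\bigl(\begin{smallmatrix}8&4&4\\4&8&4\\4&4&8\end{smallmatrix}\bigr)$. I expect this last enumeration, together with pinning down the $\mathbb{Q}(i)$-valued columns at $(w,b_w)$ and $(w^{-1},b_{w^{-1}})$, to be the main obstacle, since it forces one to work over $\mathbb{Z}[i]$ rather than $\mathbb{Z}$ and substantially enlarges the search. The $k(B)$-conjecture for central extensions then follows as in \autoref{centext}: if the defect group is a central extension of $D_8\ast C_4$ by a cyclic group $\langle u\rangle$, the major subsection $(u,b)$ has $b$ dominating a block with defect group $D_8\ast C_4$, so $C_b=|\langle u\rangle|\bigl(\begin{smallmatrix}8&4&4\\4&8&4\\4&4&8\end{smallmatrix}\bigr)$ (or the $1\times1$ matrix $(16|\langle u\rangle|)$ in the nilpotent case), and \autoref{majorast} applied with the form underlying \eqref{KW} gives $k(B)\le\sum_i (C_b)_{ii}-\sum_i (C_b)_{i,i+1}=|\langle u\rangle|(24-8)=16|\langle u\rangle|=|D|$.
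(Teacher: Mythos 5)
Your proposal reproduces the paper's own proof almost step for step: the same presentation of $D$ with $\Z(D)=\langle w\rangle\cong C_4$, the same six subsections (four major ones $(1,B)$, $(w,b_w)$, $(w^{-1},b_{w^{-1}})$, $(w^2,b_{w^2})$ and two nonmajor ones with $l=1$), the same domination arguments giving the Cartan matrix $\bigl(\begin{smallmatrix}8&4&4\\4&8&4\\4&4&8\end{smallmatrix}\bigr)$ for the three nontrivial major subsections, the count $k(B)=l(B)+11$, the centrally controlled bound $l(B)\ge l(b_{w^2})=3$ from \cite{KuelshammerOkuyama}, the contribution argument (oddness of $16m^{(w^2,b_{w^2})}_{\chi\chi}$, then (5G) in \cite{BrauerBlSec2} and Proposition~1 in \cite{BroueSanta} against $\sum_\chi|d^x_{\chi\phi_x}|^2=8$) to exclude $k(B)=16$, and the concluding GAP enumeration for the Cartan matrix together with the application of \autoref{majorast} to central extensions.

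There is, however, one genuinely incorrect step: your parenthetical characterization of the height-$0$ characters as ``those with $d^{w}_{\chi\phi}=0$''. Since $(w,b_w)$ is a \emph{major} subsection, every $\chi\in\Irr(B)$ has nonzero contribution $m^{(w,b_w)}_{\chi\chi}$, so no row of the $(w,b_w)$-part of the generalized decomposition matrix can vanish; in the final form of that matrix the eight height-$0$ characters have integral rows and the six height-$1$ characters have purely imaginary rows in $\mathbb{Z}[i]$. Consequently your deduction of $2$-rationality does not go through as written: integrality of $d^{x}$, $d^{xw}$ and $d^{w^2}$ alone says nothing about the values of $d^{w}_{\chi\phi}$ on height-$0$ characters, which is exactly what $2$-rationality requires. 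The paper closes this gap with Brauer's Permutation Lemma (Theorem~11 in \cite{Brauerconnection}): the Galois automorphism $i\mapsto -i$ swaps the three columns belonging to $(w,b_w)$ with the three belonging to $(w^{-1},b_{w^{-1}})$ (here your correct observation that $w\not\sim w^{-1}$ enters) and fixes the remaining eight columns, so \emph{exactly} eight characters are $2$-rational and the other six split into three pairs of $2$-conjugate characters; the orthogonality relations over $\mathbb{Z}[i]$ then pin down the form of the $(w,b_w)$-part, and the theory of contributions identifies the eight $2$-rational characters as precisely those of height $0$. Your planned $\mathbb{Z}[i]$-enumeration with GAP would in the end recover this structure, but as an argument for the $2$-rationality and pairing statements you must replace the vanishing claim by the permutation-lemma count.
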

\begin{proof}
The proof (and the result) is very similar to \autoref{q8xc2}. Let $D:=\langle x,y,z\mid x^4=y^2=[x,z]=[y,z]=1,\ yxy=x^{-1},\ x^2=z^2\rangle\cong D_8\ast C_4$. We have $e(B)\in\{1,3\}$ and $\A_0(D,b_D)=\{M_1,M_2,M_3,D\}$ with
\begin{align*}
M_1&:=\langle x,z\rangle\cong C_4\times C_2,\\
M_2&:=\langle y,z\rangle\cong C_4\times C_2,\\
M_3&:=\langle xy,z\rangle\cong C_4\times C_2.
\end{align*}
Hence, we may assume $e(B)=3$. Then $M_1$, $M_2$ and $M_3$ are conjugate in $G$. There are four major subsections $(1,B)$, $(z,b_z)$, $(z^{-1},b_{z^{-1}})$ and $(x^2,b_{x^2})$. The subpair $(M_1,b_{M_1})$ gives two nonmajor subsections $(x,b_x)$ and $(xz,b_{xz})$ up to conjugation. As usual, we have $l(b_x)=l(b_{xz})=1$. The blocks $b_z$ and $b_{z^{-1}}$ dominate blocks with defect groups $D/\langle z\rangle\cong C_2^2$ and inertial index $3$. Hence, we have $l(b_z)=l(b_{z^{-1}})=3$. The block $b_{x^2}$ dominates a block with defect group $C_2^3$ and inertial index $3$. Thus, again we have $l(b_{x^2})=3$. Collecting these numbers gives $k(B)=11+l(B)$. The Cartan matrix of the blocks $b_z$, $b_{z^{-1}}$ and $b_{x^2}$ is 
\[\begin{pmatrix}8&4&4\\4&8&4\\4&4&8\end{pmatrix}\]
up to equivalence. Now an analysis of the generalized decomposition numbers $d^{x^2}_{\chi\phi}$ as in the proof of \autoref{q8xc2} reveals $k(B)=14$, $k_0(B)=8$, $k_1(B)=6$ and $l(B)=3$. Next we study the other generalized decomposition numbers. Again as in the proof of \autoref{q8xc2} the numbers $d^{x}_{\chi\phi}$ and $d^{xz}_{\chi\phi}$ are integral. Thus, they consist of eight entries $\pm1$ and six entries $0$. However, in contrast to \autoref{q8xc2} the numbers $d^z_{\chi\phi}$ and $d^{z^{-1}}_{\chi\phi}$ are not always real (see (6B) in \cite{BrauerBlSec2}). Let $Q$ be the part of the generalized decomposition matrix corresponding to $(z,b_z)$. By Brauer's Permutation Lemma, eight of the ordinary irreducible characters are $2$-rational. The remaining ones split in three pairs of $2$-conjugate characters (see Theorem~11 in \cite{Brauerconnection}). This shows that $Q$ has exactly eight real valued rows. 
Let $q_j$ be the $j$-th column of $Q$ for $j=1,2,3$. Then we can write $q_j=a_j+b_ji$ with $i:=\sqrt{-1}$ and $a_j,b_j\in\mathbb{Z}^{14}$. 
The orthogonality relations show that $a_j$ has four entries $\pm1$ and ten entries $0$ (for $j=1,2,3$). 
The same holds for $b_j$. Moreover, we have $4=(q_1\mid q_2)=(a_1\mid a_2)+(b_1\mid b_2)$ and $0=(q_1\mid\overline{q_2})=(a_1\mid a_2)-(b_1\mid b_2)$, where $(.\mid.)$ denotes the standard scalar product of $\mathbb{C}^{14}$. This shows $(a_1\mid a_2)=(b_1\mid b_2)=2$ and similarly $(a_1\mid a_3)=(a_2\mid a_3)=(b_1\mid b_3)=(b_2\mid b_3)=2$. Using this, we see that $Q$ has the form
\[Q=\left(\begin{array}{cccccccccccccc}1&1&1&1&.&.&.&.&i&-i&i&-i&.&.\\1&1&.&.&1&1&.&.&i&-i&.&.&i&-i\\1&1&.&.&.&.&1&1&.&.&i&-i&i&-i\end{array}\right)^{\text{T}}.\]
The theory of contributions reveals that the eight characters of height $0$ are $2$-rational. As in the proof of the previous propositions we enumerate the possible generalized decomposition matrices with GAP, and obtain the Cartan matrix of $B$. 
\end{proof}

We collect the previous propositions in the next theorem.

\begin{Theorem}\label{index16}
Let $B$ be a block with a defect group which is a central extension of a group $Q$ of order $16$ by a cyclic group.
If $Q\not\cong C_2^4$ or $9\nmid e(B)$, then Brauer's $k(B)$-conjecture holds for $B$.
\end{Theorem}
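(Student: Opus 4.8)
The plan is to reduce the statement to a finite check over the isomorphism types of $Q$ and then to quote, for each type, a result already established above. Each proposition of this section, together with \autoref{centext} and the theorem on minimal nonabelian defect groups, is phrased for \emph{every} central extension of the group in question by a cyclic group; the passage from $B$ to a major subsection $(u,b)$ with $D/\langle u\rangle\cong Q$ and the application of \autoref{majorast} are already incorporated into those results. Consequently, once $Q$ is matched with the appropriate statement, nothing further is required. Since there are exactly fourteen groups of order $16$ up to isomorphism, the proof amounts to verifying that these fourteen types are exhausted by the earlier results, the type $Q\cong C_2^4$ being the only one that carries an extra hypothesis.

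For the eight metacyclic groups of order $16$ --- namely $C_{16}$, $C_8\times C_2$, $C_4\times C_4$, $C_4\rtimes C_4$, the modular group of order $16$, $D_{16}$, $SD_{16}$ and $Q_{16}$ --- I would apply \autoref{centext} at once. Four of the remaining (non-metacyclic) groups are settled directly by the propositions above: $C_4\times C_2^2$ by \autoref{C4C2C2}, $D_8\times C_2$ by \autoref{d8xc2}, $Q_8\times C_2$ by \autoref{q8xc2}, and the central product $D_8\ast C_4$ by the proposition immediately preceding this theorem. The elementary abelian group $C_2^4$ is covered by \autoref{elem16}; this is the single type needing a restriction, and it produces exactly the clause ``$Q\not\cong C_2^4$ or $9\nmid e(B)$'' in the statement.

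The step I expect to be the main obstacle is not to overlook the fourteenth group. After the five non-metacyclic groups above have been removed there remains precisely one more, the group
\[Q_0=\langle a,b\mid a^4=b^2=[a,b]^2=[a,[a,b]]=[b,[a,b]]=1\rangle\]
of order $16$. In contrast with the five non-metacyclic groups just treated, all of which require at least three generators, $Q_0$ is generated by two elements; nevertheless it contains a copy of $C_2^3$ (for instance $\langle a^2,b,[a,b]\rangle$), so it is not metacyclic. I would then verify that each of its three maximal subgroups is abelian, so that $Q_0$ is minimal nonabelian. Having order $16<2^7$, it is not of the excluded type $\langle x,y\mid x^{2^r}=y^{2^r}=[x,y]^2=[x,[x,y]]=[y,[x,y]]=1\rangle$ with $r\ge3$, and therefore central extensions of $Q_0$ by a cyclic group are governed by the theorem on minimal nonabelian defect groups. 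Identifying this last type as minimal nonabelian is the only genuinely non-routine point; with it in hand the fourteen isomorphism types are accounted for and the theorem follows.
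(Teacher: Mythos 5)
Your proposal is correct and takes essentially the same route as the paper: the paper's proof simply lists the fourteen isomorphism types of groups of order $16$ (eight metacyclic ones covered by \autoref{centext}, the minimal nonabelian group of order $16$ covered by the theorem on minimal nonabelian defect groups, $C_4\times C_2^2$, $C_2^4$, $D_8\times C_2$, $Q_8\times C_2$ and $D_8\ast C_4$ covered by the preceding propositions, with $C_2^4$ carrying the $9\nmid e(B)$ restriction). Your explicit verification that the fourteenth group is minimal nonabelian, nonmetacyclic, and not of the excluded type $r\ge 3$ is exactly the implicit content of the paper's list.
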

\begin{proof}
For convenience of the reader, we list the $14$ groups of order $16$:
\begin{itemize}
\item the metacyclic groups: $C_{16}$, $C_8\times C_2$, $C_4^2$, $C_4\rtimes C_4$, $D_{16}$, $Q_{16}$, $SD_{16}$ (semidihedral), $M_{16}$ (modular)
\item the minimal nonabelian group: $\langle x,y\mid x^4=y^2=[x,y]^2=[x,x,y]=[y,x,y]=1\rangle$
\item the nonmetacyclic abelian groups: $C_4\times C_2^2$, $C_2^4$
\item $D_8\times C_2$
\item $Q_8\times C_2$
\item $D_8\ast C_4$\qedhere
\end{itemize}
\end{proof}

\begin{Corollary}\label{defect5}
Let $B$ be a block with defect group $D$ of order $32$. If $D$ is not extraspecial of type $D_8\ast D_8$ (central product) or if $9\nmid e(B)$, then Brauer's $k(B)$-conjecture holds for $B$.
\end{Corollary}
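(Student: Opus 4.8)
The plan is to derive this from \autoref{index16} by realising every defect group of order $32$ as a central extension of a group of order $16$ by a cyclic group, and then to dispose of the few defect groups for which this forces the excluded quotient $C_2^4$. Let $D$ be a defect group with $|D|=32$. First I would handle the case $\exp\Z(D)\ge4$: here $D$ contains a central cyclic subgroup of order $4$, hence of index $8$, and \autoref{elem8} already gives the conjecture. So I may assume $\Z(D)$ is elementary abelian. Choosing any central involution $u$, the quotient $Q:=D/\langle u\rangle$ has order $16$ and $D$ is a central extension of $Q$ by $\langle u\rangle\cong C_2$; whenever some central involution $u$ can be chosen with $Q\not\cong C_2^4$, \autoref{index16} applies and we are done.

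It therefore remains to understand when $D/\langle u\rangle\cong C_2^4$ for \emph{every} central involution $u$. This forces $\Phi(D)\subseteq\bigcap_u\langle u\rangle$, where $u$ runs over the central involutions. Since $\Z(D)$ is elementary abelian, this intersection is trivial as soon as $\operatorname{rank}\Z(D)\ge2$, forcing $\Phi(D)=1$ and hence $D\cong C_2^5$; and when $\Z(D)\cong C_2$ it gives $\Phi(D)=\Z(D)=D'$ of order $2$, so that $D$ is extraspecial, i.e. $D\cong D_8\ast D_8$ or $D\cong D_8\ast Q_8$. For each of these three groups \autoref{index16} (applied with $Q=C_2^4$) still settles the conjecture provided $9\nmid e(B)$, so only the subcase $9\mid e(B)$ requires extra work.

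For the extraspecial groups I would argue through the inertial quotient, which embeds into $\operatorname{Out}(D)\cong\mathrm{O}_4^{\pm}(2)$. Since the odd part of $\mathrm{O}_4^-(2)$ is $15$ and hence coprime to $9$, the case $9\mid e(B)$ cannot occur for $D\cong D_8\ast Q_8$, so \autoref{index16} applies there as well. For $D\cong D_8\ast D_8$, however, $\mathrm{O}_4^+(2)$ has order $72$ and does contain subgroups of order $9$, and no reduction to order $16$ is available — this is exactly the stated exception.

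The main obstacle is the genuinely new case $D\cong C_2^5$ with $9\mid e(B)$, which \autoref{index16} does not reach. Here I would use a major subsection $(u,b)$ (all subsections are major because $D$ is abelian): then $b$ dominates a block $\overline b$ with defect group $C_2^4$ and Cartan matrix $C_{\overline b}$, and the Cartan matrix of $b$ equals $2C_{\overline b}$. The point is to choose the involution $u$ so that $e(\overline b)=|\C_E(u)|$ is coprime to $9$, where $E\le\GL(5,2)$ is the inertial quotient. Such a $u$ must exist: a Sylow $3$-subgroup of $\GL(5,2)$ is elementary abelian of order $9$ with a one-dimensional fixed space, while the normaliser of a Singer cycle $C_{31}$ is $C_{31}\rtimes C_5$ and so contains no element of order $3$; a short incidence count between the nonzero vectors and the order-$9$ subgroups then shows that the $31$ involutions of $D$ cannot all have stabiliser of order divisible by $9$. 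For such a $u$ we have $9\nmid e(\overline b)$, so \autoref{elem16} determines $C_{\overline b}$ up to equivalence and supplies a positive definite integral form $q$ with $\sum q_{ij}(c_{\overline b})_{ij}\le16$. Applying \autoref{majorast} to $(u,b)$ with this same $q$ and using $C_b=2C_{\overline b}$ yields $k(B)\le 2\cdot 16=32=|D|$, as desired. The delicate point is precisely this group-theoretic existence of a good involution $u$; everything else is a finite verification already carried out in \autoref{index16} and \autoref{elem16}.
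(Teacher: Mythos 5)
Most of your reduction is correct and, in places, cleaner than the paper's. The paper finds the critical groups by checking with GAP which $D$ of order $32$ satisfy $9\mid|\Aut(D)|$ (namely $C_2^5$, $Q_8\times C_2^2$ and $D_8\ast D_8$, the second being handled through the quotient $Q_8\times C_2$), whereas your Frattini-subgroup argument isolates $C_2^5$ and the two extraspecial groups by hand, absorbs $Q_8\times C_2^2$ directly into \autoref{index16}, and correctly eliminates $D_8\ast Q_8$ because its outer automorphism group is isomorphic to $S_5$, which has no subgroup of odd order divisible by $9$. Your endgame for $C_2^5$ (major subsection, domination, $C_b=2C_{\overline b}$, the Cartan matrices and forms from the proof of \autoref{elem16}, then \autoref{majorast}) is exactly the mechanism of the paper, which chooses $u$ in an orbit of length $21$ after pinning down the structure of $I(B)$; you also avoid the paper's appeal to Robinson's Corollary~1.2(ii).

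The genuine gap is in the step you yourself call delicate: the existence of $u\ne1$ with $9\nmid|\C_E(u)|$ does not follow from the two ingredients you cite. Your incidence count gives this much: every subgroup of order $9$ of $\GL(5,2)$ is elementary abelian ($\GL(5,2)$ has no element of order $9$) and fixes exactly one nonzero vector, so if all $31$ involutions had stabilizers of order divisible by $9$, then $E$ would have at least $31$ Sylow $3$-subgroups; since their number divides $5\cdot7\cdot31$ and is $\equiv1\pmod3$, it would be $31$ or $217$, and in particular $31\mid e(B)$. But the Singer-normalizer fact excludes only a \emph{normal} Sylow $31$-subgroup of $E$; it says nothing about the Sylow alternative $n_{31}(E)=63$, i.e. $e(B)\in\{3^2\cdot7\cdot31,\ 3^2\cdot5\cdot7\cdot31\}$, and element counting alone does not dispose of the larger order. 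This is precisely where the paper inserts the facts, stated there without proof, that all groups of order $3^2\cdot7\cdot31$ and $3^2\cdot5\cdot7\cdot31$ have normal Sylow $31$-subgroups; your sketch needs a replacement for them. The gap can be closed without GAP: since the Sylow $3$-subgroup $P\cong C_3^2$ of $E$ is abelian and $\N_E(P)/\C_E(P)$ is an odd-order subgroup of $\Aut(P)\cong\GL(2,3)$, every element of $\N_E(P)$ has $3$-part in $P\le\C_E(P)$ and $3'$-part mapping trivially into a quotient of order dividing $3$, so $\N_E(P)=\C_E(P)$; Burnside's normal $p$-complement theorem then yields a normal $3$-complement $K\trianglelefteq E$ of order dividing $5\cdot7\cdot31$, whose Sylow $31$-subgroup is normal in $K$ by Sylow's theorem, hence characteristic in $K$ and normal in $E$ --- and now your Singer argument shows $31\nmid e(B)$. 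Consequently $E$ has at most $7$ Sylow $3$-subgroups, so at most $7$ of the $31$ involutions can have stabilizer of order divisible by $9$, and the required $u$ exists. With this lemma supplied, your proof is complete.
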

\begin{proof}
By \autoref{index16} we may assume that $9\mid e(B)$. In particular $9\mid\Aut(D)$. Now one can show (for example with GAP) that there are just three possibilities for $D$, namely $C_2^5$, $Q_8\times C_2^2$ and the extraspecial group $D_8\ast D_8$. In the case $D\cong Q_8\times C_2^2$ we can choose a major subsection $(u,b)$ such that $D/\langle u\rangle\cong Q_8\times C_2$. 

Hence, by hypothesis we may assume that $D$ is elementary abelian. By Corollary~1.2(ii) in \cite{Robinson} we may also assume that the inertial group $I(B)$ of $B$ is nonabelian. In particular $9$ is a proper divisor of $e(B)$. In general $e(B)$ is a divisor of $3^2\cdot 5\cdot 7\cdot 31$ (this is the odd part of $|\Aut(D)|=|\GL(3,2)|$). 

Assume that $e(B)$ is also divisible by $31$. Since the normalizer of a Sylow $31$-subgroup of $\Aut(D)\cong\GL(5,2)$ has order $5\cdot 31$, $I(B)$ does not contain a normal Sylow $31$-subgroup. Thus, by Sylow's theorem we also have $7\mid e(B)$. However, all groups of order $3^2\cdot 7\cdot 31$ and $3^2\cdot 5\cdot 7\cdot 31$ have a normal Sylow $31$-subgroup. This shows $31\nmid e(B)$.

Now suppose that $5\cdot7\mid e(B)$. Since the normalizer of a Sylow $7$-subgroup of $\GL(5,2)$ has order $2\cdot 3^2\cdot 7$, $I(B)$ does not contain a normal Sylow $7$-subgroup. However, all groups of order $3^2\cdot 5\cdot 7$ have a normal Sylow $7$-subgroup. Hence, $5\cdot 7\nmid e(B)$.

Next we consider the case $e(B)=3^2\cdot 7$. Then the action of $I(B)$ on $D$ induces an orbit of length $21$. If we choose the major subsection $(u,b)$ such that $u$ lies in this orbit, then the inertial index of $b$ is $3$. Thus, the claim follows in this case.

Finally in the case $e(B)=3^2\cdot 5$, the inertial group $I(B)$ would be abelian. Hence, the proof is complete.
\end{proof}

\section{$2$-blocks with minimal nonmetacyclic defect groups}
We remark that the groups $C_2^3$, $Q_8\times C_2$ and $D_8\ast C_4$ are minimal nonmetacyclic. Apart from these there is only one more minimal nonmetacyclic $2$-group (see Theorem~66.1 in \cite{Berkovich2}). Thus, it seems natural to obtain the block invariants also for this defect group. The next proposition shows that these blocks are nilpotent. We use the notion of fusion systems (see \cite{Linckelmann} for definitions and results). 

\begin{Proposition}
Every fusion system on $P:=\langle x,y,z\mid x^4=y^4=[x,y]=1,\ z^2=x^2,\ zxz^{-1}=xy^2,\ zyz^{-1}=x^2y\rangle$ is nilpotent.
\end{Proposition}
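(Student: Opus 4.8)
The plan is to show that every (saturated) fusion system $\mathcal{F}$ on $P$ equals $\mathcal{F}_P(P)$. By Alperin's fusion theorem for fusion systems, $\mathcal{F}$ is generated by $\Aut_{\mathcal{F}}(P)$ together with the automorphism groups of its $\mathcal{F}$-essential subgroups, so it suffices to prove two statements: that $\operatorname{Out}_{\mathcal{F}}(P)=1$, and that $P$ has no $\mathcal{F}$-essential subgroups. I would first record the structure of $P$ by direct computation: $P'=\Phi(P)=\Z(P)=\langle x^2,y^2\rangle\cong C_2^2$, the quotient $P/\Phi(P)\cong C_2^3$, the unique involutions are $x^2,y^2,x^2y^2$ so that $\Omega_1(P)=\Z(P)$ and $P$ has exponent $4$, and $A:=\langle x,y\rangle\cong C_4\times C_4$ is the unique abelian maximal subgroup, hence characteristic. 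The conjugation action of $z$ on $A$ is the involution $M=\bigl(\begin{smallmatrix}1&2\\2&1\end{smallmatrix}\bigr)\in\GL(2,\mathbb{Z}/4\mathbb{Z})$, which crucially satisfies $M\equiv 1\pmod 2$ and $M^2=1$.

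For $\operatorname{Out}_{\mathcal{F}}(P)=1$ I would use that $\operatorname{Inn}(P)$ is a Sylow $2$-subgroup of $\Aut_{\mathcal{F}}(P)$, so $\operatorname{Out}_{\mathcal{F}}(P)$ has odd order and it is enough to show that $\Aut(P)$ is a $2$-group. Any $\alpha\in\Aut(P)$ preserves the characteristic subgroup $A$; writing $\alpha(z)=z^jw$ with $j$ odd and $w\in A$ and comparing the induced action on $A$ gives $\alpha|_A\,M\,\alpha|_A^{-1}=M^j=M$, so $\alpha|_A$ lies in $\C_{\GL(2,\mathbb{Z}/4\mathbb{Z})}(M)$. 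I would then compute that this centralizer reduces modulo $2$ into the subgroup $\{1,N\}\cong C_2$ of $\GL(2,\mathbb{F}_2)$, where $N=\bigl(\begin{smallmatrix}0&1\\1&0\end{smallmatrix}\bigr)$; since the reduction kernel is a $2$-group, the centralizer itself is a $2$-group. The kernel of the restriction map $\Aut(P)\to\Aut(A)$ consists of the automorphisms fixing $A$ pointwise and sending $z\mapsto zw$, which inject into $A$ and thus form a $2$-group as well. Hence $\Aut(P)$ is a $2$-group and $\operatorname{Out}_{\mathcal{F}}(P)=1$.

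For the absence of essential subgroups, note that an $\mathcal{F}$-essential subgroup $Q$ is $\mathcal{F}$-centric, so $\Z(P)\le\C_P(Q)=\Z(Q)\le Q$ and $Q\supseteq\Phi(P)$; therefore $Q$ has order $8$ or $16$. An order-$8$ subgroup containing $\Phi(P)$ is abelian (a nonabelian group of order $8$ has center of order $2$, too small to contain $\Phi(P)\cong C_2^2$), hence $\cong C_4\times C_2$ by $\Omega_1(P)=\Z(P)$ and $\exp P=4$; its automorphism group is a $2$-group, so $\operatorname{Out}_{\mathcal{F}}(Q)$ is a $2$-group and $Q$ is not $\mathcal{F}$-radical. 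For a maximal subgroup $Q$ one has $[P,Q]\le P'=\Phi(P)\le\Phi(Q)$, so the nontrivial group $\operatorname{Out}_P(Q)\cong P/Q\cong C_2$ acts trivially on $Q/\Phi(Q)$. On the other hand, since the stability group of the series $Q\ge\Phi(Q)\ge 1$ is a $2$-group, $\mathcal{F}$-radicality ($O_2(\operatorname{Out}_{\mathcal{F}}(Q))=1$) forces $\operatorname{Out}_{\mathcal{F}}(Q)$ to embed into $\GL(Q/\Phi(Q))$, i.e.\ to act faithfully on $Q/\Phi(Q)$. This contradicts the nontrivial $\operatorname{Out}_P(Q)\le\operatorname{Out}_{\mathcal{F}}(Q)$ acting trivially there. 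Thus no maximal subgroup is essential, and $\mathcal{F}=\mathcal{F}_P(P)$ is nilpotent.

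The one genuinely delicate candidate is the maximal subgroup $A\cong C_4\times C_4$: it is centric and $\Aut(A)=\GL(2,\mathbb{Z}/4\mathbb{Z})$ has order divisible by $3$, so it cannot be dismissed by a cheap order argument and is the only serious threat. The decisive input is exactly $M\equiv 1\pmod 2$, equivalently $[P,A]\le\Phi(A)$, which is what makes the automizer from $P$ act trivially on $A/\Phi(A)$ and so collides with radicality. The remaining routine point I expect to need is the verification that $\Phi(P)\le\Phi(Q)$ for the two maximal subgroups containing neither $x$ nor a square root of $x^2$; here a short commutator computation suffices (for instance $[y,xz]=x^2$ puts $x^2$ into $Q'\le\Phi(Q)$), and these few case checks can be confirmed with GAP if desired.
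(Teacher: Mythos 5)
Your proof is correct, but it takes a genuinely different route from the paper's. The paper exploits that $P$ is minimal nonmetacyclic: every proper subgroup is metacyclic, so an $\mathcal{F}$-essential subgroup $Q$ (whose automorphism group cannot be a $2$-group) must satisfy $Q\cong Q_8$ or $Q\cong C_{2^r}^2$ by a lemma of Mazurov; cited results of Berkovich then force $Q=\langle x,y\rangle\cong C_4\times C_4$, and the contradiction is that $\Aut_{\mathcal{F}}(Q)\cong S_3$ would act nontrivially on $\Omega_1(Q)$ whereas $P$ centralizes $\Omega_1(Q)=\Z(P)$; finally, the fact that $\Aut(P)$ is a $2$-group is delegated to GAP. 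You avoid the metacyclic classification entirely: centricity forces $\Phi(P)=\Z(P)\le Q$, the order-$8$ candidates are abelian of type $C_4\times C_2$ with $2$-group automorphism group, and all maximal subgroups --- including the one serious candidate $\langle x,y\rangle$ --- are excluded uniformly because $[P,Q]\le P'=\Phi(P)\le\Phi(Q)$ makes $\operatorname{Out}_P(Q)\cong C_2$ act trivially on $Q/\Phi(Q)$, contradicting radicality since, by Burnside, the kernel of $\operatorname{Out}_{\mathcal{F}}(Q)\to\Aut(Q/\Phi(Q))$ is a normal $2$-subgroup. Your Frattini-quotient argument is a cousin of the paper's $\Omega_1$-argument (both use a characteristic section of $Q$ that $P$ centralizes but that any odd-order automorphism must move), yet it needs no prior identification of $Q$, which is exactly what lets you dispense with Mazurov and Berkovich; likewise you replace the paper's GAP check on $\Aut(P)$ by a hand computation via the characteristic subgroup $A=\langle x,y\rangle$ and the centralizer of $M=\bigl(\begin{smallmatrix}1&2\\2&1\end{smallmatrix}\bigr)$ in $\GL(2,\mathbb{Z}/4\mathbb{Z})$, which is correct. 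The trade-off: the paper's proof is shorter given its citations and computer verifications, while yours is self-contained and elementary. One harmless slip in your bookkeeping: $\langle y,xz\rangle$ does contain a square root of $x^2$ (namely $yxz$), and in $\langle xy,xz\rangle$ the squares $(xy)^2=x^2y^2$ and $(xz)^2=y^2$ already generate $\Phi(P)$; in any case $\Phi(P)\le\Phi(Q)$ holds for all seven maximal subgroups, exactly as your commutator method verifies.
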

\begin{proof}
Let $\mathcal{F}$ be a fusion system on $P$, and let $Q<P$ be an $\mathcal{F}$-essential subgroup. Since $Q$ is metacyclic and $\Aut(Q)$ is not a $2$-group, we have $Q\cong Q_8$ or $Q\cong C_{2^r}^2$ for some $r\in\mathbb{N}$ (see Lemma~1 in \cite{Mazurov}). By Proposition~10.17 and Proposition~1.8 in \cite{Berkovich1} it follows that $Q\cong C_4^2$. Now Theorem~66.1 in \cite{Berkovich2} implies $Q=\langle x,y\rangle$.
As usual, $\Aut_{\mathcal{F}}(Q)\cong S_3$ acts nontrivially on $\Omega_1(Q)$. However, $D$ acts trivially on $\Omega_1(Q)=\Z(D)$. 
This is not possible, since $D/Q$ is a Sylow $2$-subgroup of $\Aut_{\mathcal{F}}(Q)$. Thus, we have shown that $D$ does not contain $\mathcal{F}$-essential subgroups. By Alperin's fusion theorem, $D$ controls $\mathcal{F}$. Finally one can show (maybe with GAP) that $\Aut(D)$ is a $2$-group.
\end{proof}

The group in the last proposition has order $32$.

\begin{Corollary}
 Let $B$ be a $2$-block with minimal nonmetacyclic defect group $D$. Then one of the following holds:
\begin{enumerate}[(i)]
\item $B$ is nilpotent. Then $k_i(B)$ is the number of ordinary characters of $D$ of degree $2^i$. In particular $k(B)$ is the number of conjugacy classes of $D$ and $k_0(B)=|D:D'|$. Moreover, $l(B)=1$.
\item $D\cong C_2^3$. Then $k(B)=k_0(B)=8$ and $l(B)\in\{3,5,7\}$ (all cases occur).
\item $D\cong Q_8\times C_2$ or $D\cong D_8\ast C_4$. Then $k(B)=14$, $k_0(B)=8$, $k_1(B)=6$ and $l(B)=3$.
\end{enumerate}
\end{Corollary}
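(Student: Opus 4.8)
The plan is to turn this into a genuine corollary of the classification of minimal nonmetacyclic $2$-groups together with the block data already assembled earlier in the paper. By Theorem~66.1 in \cite{Berkovich2} and the remark opening this section, a minimal nonmetacyclic $2$-group $D$ is isomorphic to exactly one of $C_2^3$, $Q_8\times C_2$, $D_8\ast C_4$, or the group $P$ of order $32$ from the previous proposition. So the whole proof is organized as a case distinction along this four-element list.

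First I would dispose of case~(i) uniformly, before looking at the individual groups. If $B$ is nilpotent, then by the theory of nilpotent blocks (Brou\'e--Puig) $B$ is Morita equivalent to $\mathcal{O}D$; consequently $l(B)=1$, the multiset of irreducible character degrees of $B$ coincides with that of $D$, so $k_i(B)$ equals the number of irreducible characters of $D$ of degree $2^i$, $k(B)$ equals the number of conjugacy classes of $D$, and $k_0(B)=|D:D'|$ counts the linear characters of $D$. This establishes~(i), and from here on I may assume that $B$ is nonnilpotent.

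Next I run through the four groups for nonnilpotent $B$. For $D\cong P$ the decisive input is the immediately preceding proposition: every fusion system on $P$ is nilpotent, hence the fusion system of $B$ is nilpotent, hence $B$ itself is nilpotent --- contradicting the standing assumption. Thus $D\cong P$ yields only nilpotent blocks and contributes nothing beyond~(i). For $D\cong Q_8\times C_2$ and $D\cong D_8\ast C_4$ I would simply quote \autoref{q8xc2} and the proposition on $D_8\ast C_4$, which already give $k(B)=14$, $k_0(B)=8$, $k_1(B)=6$, $l(B)=3$, and the stated Cartan matrix; this is precisely~(iii).

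The only remaining and genuinely delicate case is $D\cong C_2^3$. Here I would invoke the proof of \autoref{elem8}: a nonnilpotent block with defect group $C_2^3$ satisfies $e(B)\in\{3,7,21\}$, and the three branches analysed there (using Landrock's result, the height-zero obstruction, and the Kessar--Koshitani--Linckelmann paper to discard the spurious small configurations) force $k(B)=k_0(B)=8$ and $l(B)=3,7,5$ respectively, so $l(B)\in\{3,5,7\}$. The hard part of the corollary is then the realizability assertion that all three values actually occur: I would finish by exhibiting explicit blocks attaining each value, namely the principal $2$-blocks of $C_2^3\rtimes C_3$ (giving $l(B)=3$), of $\SL(2,8)$ (giving $l(B)=7$), and of $\SL(2,8)$ extended by its field automorphism of order $3$ (giving $l(B)=5$), and checking their invariants directly, with GAP as elsewhere in the paper. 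Since all the heavy lifting is already contained in the earlier propositions, the essential obstacle is organizational --- confirming that the four-group list is exhausted and matching the uniform nilpotent formulas of~(i) against $D$ --- rather than any new block-theoretic computation.
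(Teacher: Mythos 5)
Your proposal is correct and takes essentially the same route as the paper, which states this corollary without a separate proof precisely because it is the assembly of the preceding results: the classification of minimal nonmetacyclic $2$-groups (Theorem~66.1 in \cite{Berkovich2}), the nilpotency of all fusion systems on the group of order $32$, \autoref{q8xc2} and the proposition on $D_8\ast C_4$, and the case analysis in the proof of \autoref{elem8} combined with the Kessar--Koshitani--Linckelmann result \cite{KKL} excluding the configurations $k(B)\in\{5,7\}$ --- a dependence you correctly identify. Your explicit realizability examples ($C_2^3\rtimes C_3$ for $l(B)=3$, $\SL(2,8)$ for $l(B)=7$, and $\SL(2,8)\rtimes C_3$ for $l(B)=5$) make precise the claim ``all cases occur'' which the paper leaves implicit.
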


\end{document}